\documentclass{amsart}
\usepackage{amsfonts,amsmath,amsthm,amssymb,color}
\usepackage[all]{xy}
\usepackage[colorlinks=true, linkcolor=blue, citecolor=blue]{hyperref} 
\usepackage{faktor}
\usepackage{graphicx}

\DeclareMathOperator*{\colim}{colim}
\frenchspacing
\addtolength{\textwidth}{1.2cm}
\addtolength{\textheight}{1.2cm}
\addtolength{\voffset}{-0.6cm}
\addtolength{\hoffset}{-0.6cm}

\numberwithin{equation}{section}

            \begin{document}

\newtheorem{theorem}{Theorem}[section]
\newtheorem{axiom}[theorem]{Axiom}
\newtheorem{lemma}[theorem]{Lemma}
\newtheorem{proposition}[theorem]{Proposition}
\newtheorem{corollary}[theorem]{Corollary}

\theoremstyle{definition}
\newtheorem{definition}[theorem]{Definition}
\newtheorem{example}[theorem]{Example}

\theoremstyle{remark}
\newtheorem{remark}[theorem]{Remark}

\newenvironment{magarray}[1]
{\renewcommand\arraystretch{#1}}
{\renewcommand\arraystretch{1}}

\newenvironment{acknowledgement}{\par\addvspace{17pt}\small\rm
\trivlist\item[\hskip\labelsep{\it Acknowledgement.}]}
{\endtrivlist\addvspace{6pt}}

\newcommand{\quot}[2]{
{\lower-.2ex \hbox{$#1$}}{\kern -0.2ex /}
{\kern -0.5ex \lower.6ex\hbox{$#2$}}}

\newcommand{\mapor}[1]{\smash{\mathop{\longrightarrow}\limits^{#1}}}
\newcommand{\mapin}[1]{\smash{\mathop{\hookrightarrow}\limits^{#1}}}
\newcommand{\mapver}[1]{\Big\downarrow
\rlap{$\vcenter{\hbox{$\scriptstyle#1$}}$}}
\newcommand{\liminv}{\smash{\mathop{\lim}\limits_{\leftarrow}\,}}

\newcommand{\sSet}{\mathbf{sSet}}
\newcommand{\Set}{\mathbf{Set}}
\newcommand{\Art}{\mathbf{Art}}
\newcommand{\CDGA}{\mathbf{CDGA}}
\newcommand{\CGA}{\mathbf{CGA}}
\newcommand{\DGCA}{\mathbf{CDGA}}
\newcommand{\sCA}{\mathbf{sCA}}
\newcommand{\Top}{\mathbf{Top}}
\newcommand{\DGMod}{\mathbf{DGMod}}
\newcommand{\DGLA}{\mathbf{DGLA}}
\newcommand{\DGArt}{\mathbf{DGArt}}
\newcommand{\bM}{\mathbf{M}}
\newcommand{\bN}{\mathbf{N}}
\newcommand{\bC}{\mathbf{C}}

\newcommand{\solose}{\Rightarrow}
\newcommand{\PSI}{\Psi\mathbf{Sch}_I(\mathbf{M})}
\newcommand{\PSJ}{\Psi\mathbf{Sch}_J(\mathbf{M})}
\newcommand{\Sym}{\mbox{Sym}}

\newcommand{\specif}[2]{\left\{#1\,\left|\, #2\right. \,\right\}}

\renewcommand{\bar}{\overline}
\newcommand{\de}{\partial}
\newcommand{\debar}{{\overline{\partial}}}
\newcommand{\per}{\!\cdot\!}
\newcommand{\Oh}{\mathcal{O}}
\newcommand{\sA}{\mathcal{A}}
\newcommand{\sB}{\mathcal{B}}
\newcommand{\sC}{\mathcal{C}}
\newcommand{\sE}{\mathcal{E}}
\newcommand{\sF}{\mathcal{F}}
\newcommand{\sG}{\mathcal{G}}
\newcommand{\sH}{\mathcal{H}}
\newcommand{\sI}{\mathcal{I}}
\newcommand{\sJ}{\mathcal{J}}
\newcommand{\sK}{\mathcal{K}}
\newcommand{\sL}{\mathcal{L}}
\newcommand{\sM}{\mathcal{M}}
\newcommand{\sN}{\mathcal{N}}
\newcommand{\sP}{\mathcal{P}}
\newcommand{\sU}{\mathcal{U}}
\newcommand{\sV}{\mathcal{V}}
\newcommand{\sX}{\mathcal{X}}
\newcommand{\sY}{\mathcal{Y}}
\newcommand{\sW}{\mathcal{W}}
\newcommand{\Ni}{\mathcal{N}_I}
\newcommand{\Nj}{\mathcal{N}_J}
\newcommand{\PM}{\Psi\mathbf{Mod}}
\newcommand{\QCoh}{\mathbf{QCoh}}
\newcommand{\DGSch}{\mathbf{DGSch}}
\newcommand{\DGAff}{\mathbf{DGAff}}

\newcommand{\Aut}{\operatorname{Aut}}
\newcommand{\Cof}{\operatorname{Cof}}
\newcommand{\Fun}{\operatorname{Fun}}
\newcommand{\Imm}{\operatorname{Imm}}
\newcommand{\Mor}{\operatorname{Mor}}
\newcommand{\Map}{\operatorname{Map}}
\newcommand{\Def}{\operatorname{Def}}
\newcommand{\D}{\operatorname{D}}
\newcommand{\Ho}{\operatorname{Ho}}
\newcommand{\Mod}{\operatorname{Mod}}
\newcommand{\Hom}{\operatorname{Hom}}
\newcommand{\Hilb}{\operatorname{Hilb}}
\newcommand{\HOM}{\operatorname{\mathcal H}\!\!om}
\newcommand{\DER}{\operatorname{\mathcal D}\!er}
\newcommand{\Spec}{\operatorname{Spec}}
\newcommand{\Der}{\operatorname{Der}}
\newcommand{\Tor}{{\operatorname{Tor}}}
\newcommand{\Ext}{{\operatorname{Ext}}}
\newcommand{\End}{{\operatorname{End}}}
\newcommand{\END}{\operatorname{\mathcal E}\!\!nd}
\newcommand{\Image}{\operatorname{Im}}
\newcommand{\coker}{\operatorname{coker}}
\newcommand{\tot}{\operatorname{tot}}
\newcommand{\Id}{\operatorname{Id}}
\newcommand{\cone}{\operatorname{cone}}
\newcommand{\cocone}{\operatorname{cocone}}
\newcommand{\cyl}{\operatorname{cyl}}
\newcommand{\id}{\operatorname{id}}
\newcommand{\mA}{\mathfrak{m}_{A}}

\renewcommand{\Hat}[1]{\widehat{#1}}
\newcommand{\dual}{^{\vee}}
\newcommand{\desude}[2]{\dfrac{\de #1}{\de #2}}

\newcommand{\A}{\mathbb{A}}
\newcommand{\SExt}{\mathbb{S}\mbox{Ext}}
\newcommand{\N}{\mathbb{N}}
\newcommand{\R}{\mathbb{R}}
\newcommand{\Z}{\mathbb{Z}}
\renewcommand{\H}{\mathbb{H}}
\renewcommand{\P}{\mathbb{P}}
\renewcommand{\L}{\mathbb{L}}
\newcommand{\proj}{\mathbb{P}}
\newcommand{\Proj}{\mathbb{P}}
\newcommand{\K}{\mathbb{K}\,}
\newcommand\C{\mathbb{C}}
\newcommand\Del{\operatorname{Del}}
\newcommand\Tot{\operatorname{Tot}}
\newcommand\Grpd{\mbox{\bf Grpd}}

\newcommand\é{\'e}
\newcommand\è{\`e}
\newcommand\à{\`a}
\newcommand\ì{\`i}
\newcommand\ù{\`u}
\newcommand\ò{\`o }


\newcommand{\rh}{\rightarrow}
\newcommand{\contr}{{\mspace{1mu}\lrcorner\mspace{1.5mu}}}

\newcommand{\bi}{\boldsymbol{i}}
\newcommand{\pal}{{\boldsymbol{\cdot}}}
\newcommand{\pallino}{{\scriptscriptstyle\bullet}}

\newcommand{\bl}{\boldsymbol{l}}

\newcommand{\MC}{\operatorname{MC}}
\newcommand{\TW}{\operatorname{TW}}

\title[Deformations of schemes via Reedy-Palamodov resolutions]{Deformations of algebraic schemes via Reedy-Palamodov cofibrant resolutions}


\author{Marco Manetti}
\address{\newline
Universit\`a degli studi di Roma La Sapienza,\hfill\newline
Dipartimento di Matematica \lq\lq Guido
Castelnuovo\rq\rq,\hfill\newline
P.le Aldo Moro 5,
I-00185 Roma, Italy.}
\email{manetti@mat.uniroma1.it}
\urladdr{www.mat.uniroma1.it/people/manetti/}

\author{Francesco Meazzini}
\email{meazzini@mat.uniroma1.it}
\urladdr{www.mat.uniroma1.it/people/meazzini/}

\subjclass[2010]{18G55,14D15,16W50}
\keywords{Model categories, Deformation theory, Differential graded algebras, Algebraic schemes, Cotangent complex}

\begin{abstract} Let $X$ be a Noetherian separated and finite dimensional scheme over a field $\K$ of characteristic zero. The goal of this paper is to study deformations of $X$ over a differential graded local Artin $\K$-algebra by using local Tate-Quillen resolutions, i.e., the algebraic analog of the Palamodov's resolvent of a complex space.
The above goal is achieved by describing the DG-Lie algebra controlling deformation theory of a diagram of differential graded commutative algebras, indexed by a direct Reedy category.  
\end{abstract}

\maketitle

\tableofcontents

\bigskip

\section{Introduction}

This paper concerns the  use of basic model category theory in the study of deformations of algebraic schemes and morphisms between them, with the aim of being accessible to a wide community,  especially to everyone having a classical background in algebraic geometry and deformation theory. For this reason 
the homotopic and simplicial background is reduced at minimum.

Let $X$ be a Noetherian separated and finite dimensional scheme over a field $\K$ of characteristic zero;  we study deformations of $X$ over a differential graded local Artin $\K$-algebra by using local Tate-Quillen resolutions, i.e., the algebraic analog of the Palamodov's resolvent of a complex space.

It is well known (see e.g. \cite{Sernesi}) that if $X=\Spec(S)$ is affine, then the deformations of $X$ are the same as the deformations of $S$ in the category of commutative algebras. The latter are studied by using a Tate-Quillen resolution $R\to S$, and are controlled by the DG-Lie algebra $\Der^*_{\K}(R,R)$ via Maurer-Cartan equation modulus gauge action.  

For general schemes,   fixing an affine open cover $\{U_i\}_{i\in I}$, the geometry of  $X$ is encoded in the diagram
\[ S_\pal\colon \sN\to \K\text{-algebras} \; , \qquad \qquad \alpha\mapsto S_{\alpha}=\Gamma(U_{\alpha},\Oh_X) \]
where $\sN$ denotes the nerve of the cover; the deformation theory of $X$ is equivalent to the one of $S_{\pal}$. 

Since Tate-Quillen resolutions are cofibrant objects in the model category $\CDGA_{\K}^{\le 0}$ of commutative differential (non-positively) graded algebras, it is natural to consider $S_\pal$ as an element of the category  
$\Fun(\sN,\CDGA_{\K}^{\le 0})$ of functors $\sN\to \CDGA_{\K}^{\le 0}$: 
\begin{equation}\label{eq.diagramX}
S_{\pal}\colon \sN\to \CDGA_{\K}^{\le 0} \; , \qquad \qquad \alpha\mapsto S_{\alpha}=\Gamma(U_{\alpha},\Oh_X)
\end{equation}
where each $S_{\alpha}$ is considered as a DG-algebra concentrated in degree $0$.

Since $\sN$ is a (direct) Reedy category then $\Fun(\sN,\CDGA_{\K}^{\le 0})$ is endowed with the Reedy model structure 
(see Section~\ref{sec.diagramoverReedy}) that is strong left proper (Proposition~\ref{prop.flatnessReedy}), in the sense of \cite{MM}, briefly recalled here in Definition~\ref{def.slp}.  According to the results of 
\cite{MM} there exists a good  deformation theory in  strong left proper model categories 
 that, among the other properties, is homotopy invariant: 
 in our particular case the deformation theory  of any diagram $R_\pal$ gives a ``deformation'' functor 
 \[ \Def_{R_\pal}\colon \DGArt_{\K}^{\le 0}\to \Set,\]
and for any diagram $P_\pal$ weak equivalent to $R_\pal$ we have an isomorphism of functors $\Def_{P_\pal}\simeq \Def_{R_\pal}$.

It is easy to prove (Lemma~\ref{lem.D=Def_classic}) that the restriction of $\Def_{S_\pal}$ to the subcategory of local Artin algebras concentrated in degree 0 is the same as the classical deformation functor of $X$. Therefore 
the above facts provide a natural way to define deformations of $X$ over general DG-Artin local ring in non-positive degrees; moreover we can replace the diagram $S_\pal$ with any weak equivalent Reedy cofibrant diagram. It is worth to notice that 
the algebraic analog of the Palamodov's resolvent \cite{Pala1,Pala2} is in fact a special case of Reedy cofibrant replacement.

Finally, for any Reedy cofibrant diagram $R_\pal$, we shall be able to prove (Lemma~\ref{lem.strict=large_cofib} and Theorem~\ref{thm.DefCofibrantDiagram}) that 
the functor $\Def_{R_\pal}$ is controlled by the DG-Lie algebra of derivations of $R_\pal$. 

The proposed proof strongly relies on the results of \cite{MM}, where deformations of  affine schemes were considered.
More precisely, we show that the ideas developed in~\cite{MM} in order to understand the deformation theory of an affine (differential graded) scheme can be easily adapted to the non-affine case. Philosophically, we can say that  the approach to deformation theory via model categories presented in this paper and in \cite{MM}, gives not only similar statements in the affine and non-affine case, but also the same  underlying ideas and strategies in the proofs.

The same approach leads to the description of the cotangent complex as a certain homotopy class of $S_\pal$-modules, namely the module of K\"{a}hler differentials of a cofibrant replacement (see Section~\ref{section.cotangentcomplex}). 
This description relies on the results of \cite{FM}, where it is proved that the homotopy category of 
$S_\pal$-modules is equivalent to the unbounded derived category of quasi-coherent sheaves on $X$.

As also suggested by the referee, it would be interesting, instead of just looking at functors of homotopy classes, to consider simplicial functors as in \cite{hinichDGC} with moduli interpretations as infinity groupoids as in the Derived Algebraic Geometry literature (Lurie, Pridham, To\"{e}n, Vezzosi etc.): 
the results of this  paper easily extend from schemes to derived schemes. However, in view of the general philosophy underlying this paper, we preferred to consider this possible extension, possibly after the simplicial analogous of the general formal theory developed in \cite{MM}.

\bigskip

\section{Deformations of  morphisms in strong left proper model categories}\label{sec.recalldefomodel}

The goal of this section is to fix notation and to review some results of~\cite{MM}.

Let $\bM$ be a fixed model category. For every object $A\in \bM$ the symbols $A\downarrow \bM$ and 
$\bM_A$ both denote the undercategory of maps $A\to B$, $B\in \bM$. 
There is a natural model structure on $\bM_A$ under which a map is a cofibration, fibration, or weak equivalence if
and only if its image in $\bM$ under the forgetful functor is,
respectively, a cofibration, fibration, or weak equivalence. 
Every 
morphism $A\to B$ induces a push-out functor $-\amalg_A B\colon \bM_A\to \bM_B$ which preserves 
cofibrations and trivial cofibrations.

\begin{definition}[Definitions 2.9 and 2.13 of \cite{MM}]\label{def.slp}
A morphism $A\to B$ in a model category $\bM$ is called \emph{flat} if the push-out functor
$-\amalg_A B\colon \bM_A\to \bM_B$ preserves pull-back squares of trivial fibrations.
The model category $\bM$ is said to be \emph{strong left proper} if every cofibration is flat.
\end{definition}

Thus, in a strong left proper model category, the push-out along a cofibration preserves trivial cofibrations and trivial fibrations; hence preserves weak equivalences, i.e. the model category is left proper. Conversely, a left proper model category may not be strong:
for instance, the category of topological spaces endowed with the usual model structure is left proper but not strong left proper.

We refer to \cite{MM} for a deeper discussion about flat morphisms and for the proof that the class of flat morphisms is closed under composition, push-outs and retracts.
An object $X$ in $\bM$ is called flat if the morphism from the initial object to $X$ is flat; clearly 
a morphism $A\to M$ is flat as a morphism in $\bM$ if and only if it is flat as an object in the undercategory $A\downarrow\bM$.

According to \cite[Cor. 3.4]{MM} an example of strong left proper model category is  $\CDGA_{\K}^{\le 0}$, the category of differential graded commutative algebras  over a field $\K$ of characteristic 0 concentrated in non-positive degrees,
equipped with the projective model structure (\cite{BG76}, \cite[V.3]{GelfandManin}):  
weak equivalences are the quasi-isomorphisms,   cofibrations are the retracts of semifree extensions and  
fibrations are the surjections in strictly negative degrees. 

It is easy to prove that in a  left proper model category, weak equivalences between flat objects 
are preserved under arbitrary push-outs \cite[2.5+2.11]{MM}. The converse is generally false and this motivates the following definition.

\begin{definition}[Definition 4.2 of \cite{MM}]\label{def.M(K)}
Let $\bM$ be a  left proper model category. A morphism $A\to K$ is said to be  a \emph{thickening} 
if for every commutative diagram 
\begin{equation}\label{equ.thickening} \xymatrix{	A \ar@/_1pc/[dr]^{f} \ar@{->}[r]^{g} & E\ar[d]^h \\
 & D	} \end{equation}
such that $f,g$ are flat and  $h\amalg \Id_K\colon E\amalg_AK\to D\amalg_AK$ is a weak equivalence (respectively: an isomorphism), then also 
 $h$ is a weak equivalence (respectively: an isomorphism).
\end{definition}

For instance, in the model category $\CDGA_{\K}^{\le0}$ every surjective morphism with nilpotent kernel is a thickening \cite[Prop. 3.5]{MM}: the name thickening is clearly motivated by the analogous notion for algebraic schemes 
\cite[8.1.3]{FGAexplained}.

\begin{definition}\label{def.deformationXL} 
Let $K\xrightarrow{f}X$ be a morphism in a left-proper model category $\bM$, with $X$ a fibrant object.   
A deformation of $f$ over  a thickening  $A\xrightarrow{p}K$ is a commutative diagram 
\[ \xymatrix{	A\ar[d]^p\ar[r]^{f_A} & X_A\ar[d] \\
K\ar[r]^f & X	} \]
such that $f_A$ is flat and the induced map $X_A\amalg_AK\to X$ is a weak equivalence. 
A direct equivalence is given by a commutative diagram 
\[ \xymatrix{	A\ar[d]_{g_A}\ar[r]^{f_A} & X_A\ar[d]\\
Y_A\ar[r]\ar[ru]^h & X	} \]
where $h$ is a weak equivalence. Two deformations are equivalent if they are so under the equivalence relation generated by direct equivalences.
\end{definition}

We denote either by $\Def_f(A\xrightarrow{p}K)$ or, with a little abuse of notation, by $\Def_f(A)$ the quotient class of deformations of $f$ up to equivalence. Given any diagram $A\xrightarrow{h} B\xrightarrow{p}K$ 
with $p,ph$ thickening, then the push-out along $h$ gives  a natural map 
$h\colon \Def_f(A\xrightarrow{ph}K)\to \Def_f(B\xrightarrow{p}K)$:  every diagram 
\[ \xymatrix{	A\ar[d]^{ph}\ar[r]^{f_A} & X_A\ar[d] \\
K\ar[r]^f & X	} \]
as in Definition~\ref{def.deformationXL} is mapped into the diagram 
\[ \begin{matrix}\xymatrix{	B\ar[d]^p\ar[r]^-{h_*(f_A)} & B\amalg_AX_A\ar[d] \\
K\ar[r]^f & X}\end{matrix}. \]

In strong left proper model categories it is possible to describe the class of deformations exclusively in terms of cofibrations.

\begin{proposition}[Lemma 4.4 and Prop. 4.6 of \cite{MM}]\label{prop.pushoutcfdefo}
Let $K\xrightarrow{f}X$ be a morphism in a strong left proper model category $\bM$, with $X$ a fibrant object.   
Then every deformation of $f$ over  a thickening  $A\xrightarrow{p}K$ is represented by a commutative diagram 
\begin{equation}\label{equ.cdefo}  
\xymatrix{	A\ar[d]^p\ar[r]^{f_A} & X_A\ar[d]^{\alpha} \\
K\ar[r]^f & X	}
\end{equation}
such that $f_A$ is a cofibration and the induced map $X_A\amalg_AK\to X$ is a weak equivalence. 

The diagrams \eqref{equ.cdefo} and 
\begin{equation}\label{equ.cdefobis}  
\xymatrix{	A\ar[d]^p\ar[r]^{g_A} & Y_A\ar[d]^{\beta} \\
K\ar[r]^f & X	}
\end{equation}
with $g_A$ a cofibration, represent the same equivalence class of deformations of $f$ if and only if 
there exists a  commutative diagram 
\begin{equation}\label{equ.cbarequivalence} 
\begin{matrix}\xymatrix{	 & A\ar[dl]_{f_A}\ar[d]\ar[rd]^{g_A} & \\
X_A\ar[r]\ar[dr]_{\alpha} & Z_A\ar[d] & Y_A\ar[l]\ar[dl]^{\beta} \\
 & X & 	}\end{matrix}
\end{equation}
with the horizontal arrows trivial cofibrations.
\end{proposition}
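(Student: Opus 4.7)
The plan is to prove the two assertions separately, the key tool in both being the fact recalled above that in a strong left proper model category weak equivalences between flat objects are preserved by arbitrary pushouts. For the first assertion, assume given a deformation with $f_A \colon A \to X_A$ merely flat, and apply the factorization axiom to write $f_A = q \circ g_A$ with $g_A \colon A \to X_A'$ a cofibration and $q \colon X_A' \to X_A$ a trivial fibration. The object $X_A'$ is cofibrant, hence flat, in $\bM_A$; since also $X_A$ is flat, the weak equivalence $q$ pushes out along $A \to K$ to a weak equivalence $X_A' \amalg_A K \to X_A \amalg_A K$. Composition with $X_A \amalg_A K \xrightarrow{\sim} X$ then shows that $(g_A, \alpha \circ q)$ is a deformation of $f$ in the required cofibration form, while $q$ itself realizes a direct equivalence from this new deformation back to the original one.

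The ``if'' direction of the second assertion proceeds in the same spirit. A zigzag as in~\eqref{equ.cbarequivalence} exhibits the composite $A \to X_A \to Z_A$ as a cofibration, so $Z_A$ is flat over $A$; strong left properness promotes the trivial cofibration $X_A \to Z_A$ to a weak equivalence $X_A \amalg_A K \to Z_A \amalg_A K$ after pushout, and combined with $X_A \amalg_A K \xrightarrow{\sim} X$ we obtain that $Z_A \amalg_A K \to X$ is a weak equivalence. Hence $Z_A$, together with its structure map to $X$, is itself a deformation of $f$, and the trivial cofibrations $X_A \to Z_A \leftarrow Y_A$ realize direct equivalences linking the deformations represented by $X_A$ and $Y_A$ through $Z_A$.

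The ``only if'' direction is the main obstacle; its content is the production of a symmetric zigzag out of an a priori asymmetric direct equivalence. Given such an equivalence $h \colon Y_A \to X_A$ between two cofibration representatives, form the coproduct $X_A \amalg_A Y_A$ in $\bM_A$ and the fold map $\phi = (\Id_{X_A}, h) \colon X_A \amalg_A Y_A \to X_A$, and factor $\phi$ as a cofibration $X_A \amalg_A Y_A \to Z_A$ followed by a trivial fibration $Z_A \to X_A$. Since $f_A$ and $g_A$ are cofibrations, the two coproduct inclusions $X_A, Y_A \to X_A \amalg_A Y_A$ are cofibrations, hence so are their composites $X_A \to Z_A$ and $Y_A \to Z_A$; the two-out-of-three axiom, applied to the trivial fibration $Z_A \to X_A$ whose compositions with these two maps are respectively $\Id_{X_A}$ and $h$, upgrades both composites to trivial cofibrations, and the structure map $Z_A \to X$ is just $\alpha$ composed with $Z_A \to X_A$. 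Finally, to match the whole equivalence relation generated by direct equivalences, one checks that the zigzag relation is already transitive: two successive zigzags meeting at a common intermediate deformation can be glued by pushing out the two trivial cofibrations into the middle along their common source, since pushouts of trivial cofibrations remain trivial cofibrations and the map to $X$ is inherited from the universal property of the pushout.
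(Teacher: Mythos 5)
The paper itself gives no proof of this proposition --- it is imported from \cite{MM} --- so your argument can only be measured against the expected one there. Most of what you write is correct and is the natural route: for the first assertion, factoring $f_A$ as a cofibration followed by a trivial fibration and invoking the preservation of weak equivalences between flat objects under push-out is exactly right; the ``if'' direction is complete; and the fold-map (mapping-cylinder) construction $X_A\amalg_A Y_A\rightarrowtail Z_A\to X_A$, together with the push-out gluing that proves transitivity of the zigzag relation, is a correct treatment of a \emph{single} direct equivalence between two cofibration representatives.

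The gap is in reducing the general ``only if'' direction to that case. The equivalence relation is generated by direct equivalences between arbitrary \emph{flat} representatives, so two cofibration representatives $X_A$ and $Y_A$ in the same class are a priori joined by a chain $X_A\sim W_1\sim\dots\sim W_n\sim Y_A$ whose intermediate deformations $W_i$ have structure maps $A\to W_i$ that are only flat. Your fold-map construction genuinely needs both legs from $A$ to be cofibrations (otherwise the coproduct inclusions, hence the maps into $Z_A$, need not be cofibrations), so it cannot be applied link by link to such a chain, and your transitivity lemma only glues zigzags that already exist. The missing step is to choose for each $W_i$ a cofibration representative $W_i'\to W_i$ as in your first paragraph and then to lift each direct equivalence $h\colon V\to W$ of flat deformations to a weak equivalence $\tilde h\colon V'\to W'$ under $A$ and over $X$: solve the lifting problem of the cofibration $A\to V'$ against the trivial fibration $W'\to W$ for the composite $V'\to V\xrightarrow{h}W$; two-out-of-three makes $\tilde h$ a weak equivalence, and compatibility with the maps to $X$ is automatic because the structure map of $W'$ factors through $W$. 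Only after this replacement does your single-link construction, chained by your transitivity argument, cover the whole generated equivalence relation. The fix is routine, but as written the proof establishes the statement only for the coarser relation generated by direct equivalences among cofibration representatives.
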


The assumption that $p$ is a thickening is essential for the validity of the following theorem.

\begin{theorem}[Homotopy invariance of deformations: Thm. 5.3 of \cite{MM}]\label{thm.homotopyequivalence}
Let $K\xrightarrow{f}X\xrightarrow{\tau}Y$ be morphisms in a strong left proper model category $\bM$. 
If $\tau$ is a weak equivalence between fibrant objects, then  for every thickening $A\to K$, the natural  map 
\[ \Def_f(A)\to \Def_{\tau f}(A),\qquad (A\to X_A\to X)\mapsto (A\to X_A\to X\xrightarrow{\tau}Y),\] 
is bijective. 
\end{theorem}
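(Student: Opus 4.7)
The plan is to verify well-definedness, reduce to two special cases via Brown-style factorization, and handle each case in turn. Well-definedness follows from two observations: first, for any deformation $A \xrightarrow{f_A} X_A \xrightarrow{\alpha} X$ of $f$, the pushout map $X_A \amalg_A K \to Y$ factors as $X_A \amalg_A K \to X \xrightarrow{\tau} Y$, a composition of weak equivalences; second, any direct-equivalence witness $Z_A \to X$ composes with $\tau$ to yield a witness $Z_A \to Y$ for the images under $\tau$.

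For bijectivity, I would factor $\tau = \pi \iota$ with $\iota\colon X \to \tilde X$ a trivial cofibration and $\pi\colon \tilde X \to Y$ a trivial fibration ($\tilde X$ fibrant because $Y$ is). The map in question then decomposes as $\Def_f(A) \to \Def_{\iota f}(A) \to \Def_{\pi \iota f}(A)$, so it suffices to handle the cases where $\tau$ is (a) a trivial fibration, or (b) a trivial cofibration between fibrant objects.

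In case (a), using Proposition~\ref{prop.pushoutcfdefo} to take $g_A$ a cofibration in any given deformation $(g_A,\beta)$ of $\tau f$, the lifting axiom applied to the square with top $fp\colon A\to X$ and bottom $\beta\colon Y_A\to Y$ produces $\alpha\colon Y_A \to X$ with $\tau\alpha=\beta$ and $\alpha g_A = fp$; taking $X_A = Y_A$ gives surjectivity on the nose, and applying the same lifting to the middle vertex in \eqref{equ.cbarequivalence} yields injectivity. In case (b), fibrancy of $X$ produces a retraction $r\colon \tilde X \to X$ with $r\tau = \id_X$, which is a weak equivalence and satisfies $\tau r \sim \id_{\tilde X}$ rel $X$ via a standard lifting in the path object of $\tilde X$. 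Given a deformation $(g_A,\beta)$ of $\tau f$, the composite $\alpha := r\beta\colon Y_A \to X$ gives a deformation $(g_A,\alpha)$ of $f$: two-out-of-three applied to $Y_A \amalg_A K \to \tilde X \xrightarrow{r} X$ (together with homotopy invariance of weak equivalences) shows the pushout map is a weak equivalence. Injectivity in case (b) is the easy direction: post-composing any witness $Z_A \to \tilde X$ with $r$ gives a witness in $\Def_f$.

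The main technical obstacle lies in case (b) surjectivity: the image of $(g_A, r\beta)$ under $\tau$ is $(g_A, \tau r \beta)$, and $\tau r \beta$ equals $\beta$ only up to a homotopy rel $A$ (both maps agree after pre-composition with $g_A$, namely equal $\iota fp$). To upgrade this rel-$A$ homotopy into a direct equivalence in the sense of \eqref{equ.cbarequivalence}, I would construct a cofibrant cylinder object $C_A$ for $Y_A$ relative to $A$ with both end inclusions being trivial cofibrations, invoking strong left properness to ensure the relevant pushout $Y_A \amalg_A Y_A$ behaves well, and then extend the homotopy to a single map $C_A \to \tilde X$ that serves as the middle vertex $Z_A$ of a direct equivalence between $(g_A, \tau r \beta)$ and $(g_A, \beta)$.
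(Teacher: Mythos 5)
First, a point of reference: the paper does not actually prove Theorem~\ref{thm.homotopyequivalence}; it is quoted verbatim from \cite{MM} (Thm.~5.3), so there is no in-paper argument to compare yours against. Judged on its own terms, your overall strategy --- check well-definedness, factor $\tau$ through a fibrant intermediate object as a trivial cofibration followed by a trivial fibration, and treat the two cases by lifting, retraction, and a cylinder argument --- is sound and is the natural route. In particular you correctly isolate the genuine difficulty in case (b): the rel-$A$ right homotopy between $\beta$ and $\iota r\beta$ must be converted into a single diagram of the form \eqref{equ.cbarequivalence}, and the good-cylinder construction on $Y_A$ in $A\downarrow\bM$ does exactly that, using that $g_A$ is a cofibration (via Proposition~\ref{prop.pushoutcfdefo}) so that $Y_A$ is cofibrant in the undercategory and the left/right homotopy correspondence applies with target the fibrant object $\widetilde{X}$. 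Strong left properness is not what is needed there; cofibrancy of $Y_A$ in $A\downarrow\bM$ suffices.

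The one step that does not work as written is injectivity in case (a). Given the witness diagram \eqref{equ.cbarequivalence} for the images $(f_A,\tau\alpha)$ and $(g_A,\tau\beta)$, you cannot simply ``apply the same lifting to the middle vertex'': to lift $Z_A\to Y$ through $\tau$ compatibly with both $\alpha$ on $X_A$ and $\beta$ on $Y_A$ you would need the induced map $X_A\amalg_A Y_A\to Z_A$ to be a cofibration, which it need not be; while lifting along only one of the two trivial cofibrations, say $X_A\to Z_A$, produces a map whose restriction to $Y_A$ agrees with $\beta$ only after composing with $\tau$, which puts you back into the homotopy-comparison situation you flagged in case (b). The standard repair is to factor $X_A\amalg_A Y_A\to Z_A$ as a cofibration $j\colon X_A\amalg_A Y_A\to Z'_A$ followed by a trivial fibration $Z'_A\to Z_A$; since $X_A\to X_A\amalg_A Y_A$ is a cofibration (a pushout of $g_A$) and similarly for $Y_A$, two-out-of-three shows that both $X_A\to Z'_A$ and $Y_A\to Z'_A$ are trivial cofibrations, and the lift of $Z'_A\to Z_A\to Y$ through the trivial fibration $\tau$ against the cofibration $j$ restricts to $\alpha$ and $\beta$ as required, yielding the desired diagram \eqref{equ.cbarequivalence} over $X$. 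With this correction the argument goes through.
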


Theorem~\ref{thm.homotopyequivalence} implies that it is properly defined the deformation theory of any morphism $f\colon K\to Y$ by setting $\Def_f=\Def_{\tau f}$, where $\tau\colon Y\to X$ is any 
weak equivalence into a fibrant object $X$. At the same time,   
Theorem~\ref{thm.homotopyequivalence} implies that deformation theory (along a thickening) is invariant under fibrant-cofibrant replacements of $f$  in the undercategory $\bM_K$:  for every diagram  
\[\xymatrix{	K\ar[d]^f\ar[r]^{i} & R\ar[d]^{\beta} \\
Y\ar[r]^\alpha & X	}\]
with $X$ fibrant, $i$ cofibration, $\beta$ fibration and $\alpha,\beta$ weak equivalences,  the morphisms $f$ and $i$ have the same deformation theory.

\bigskip
\section{Diagrams over  direct Reedy categories}\label{sec.diagramoverReedy}

Let $\sC$ be a (non empty) direct Reedy category. This means  that $\sC$ is a small category and 
there exists a degree function $Ob(\sC)\to \N$ such that every non-identity morphism raises degree. 
In particular,  every object $a$ has only the identity as a morphism $a\to a$.

Examples of direct Reedy categories are:

\begin{enumerate}
\item the category $\overrightarrow{\Delta}$ of finite ordinals  with injective strictly monotone maps.

\item the category associated to a Reedy poset: by definition a Reedy poset  is a partially ordered set $I$ such that  there exists a strictly monotone map  $\deg\colon I\to \N$, i.e. $\deg(\alpha)<\deg(\beta)$ whenever $\alpha<\beta$. 

\item every finite product of direct Reedy categories is a direct Reedy category, equipped with the degree function $\deg(a_1,\ldots,a_n)=\sum \deg(a_i)$.

\end{enumerate}

From non on we shall denote by $\sC$ a fixed direct Reedy category.  
As usual we shall denote by $\Map(\sC)$ the category of maps in $\sC$: objects are the morphisms in $\sC$, morphisms are the commutative squares. We are mainly interested in the following  full subcategories of $\Map(\sC)$:

\begin{enumerate}

\item for every $a\in \sC$ denote by $[\sC,a]$ the full subcategory of 
$\Map(\sC)$ whose objects are the morphisms $b\to a$. This is naturally isomorphic to the overcategory 
$\sC\downarrow a$.

\item for every $a\in \sC$ denote by $[\sC,a)$ the full subcategory of 
$\Map(\sC)$ whose objects are the non-identity morphisms $b\to a$. 
This is naturally isomorphic to the latching category  
$\de(\sC\downarrow a)$ defined in \cite{Hir03}.

\end{enumerate}

Notice that both $[\sC,a]$ and $[\sC,a)$ are direct Reedy categories, with degree function
$\deg(b\to a)=\deg(b)$.

\bigskip

Let $\bM$ be a fixed model category. 
For every diagram 
$X\colon \sC\to \bM$ and every $a\in \sC$ we may consider  the diagram
\[ \sL_aX\colon [\sC,a)\to \bM,\qquad (b\to a)\mapsto X_b\,.\]
The latching object of $X$ at $a$ is defined as the colimit of the diagram $\sL_aX$:
\[ L_aX=\colim_{[\sC,a)}\sL_aX\,,\]
and the latching map of $X$ at $a$ is the natural map 
$L_aX\to X_a$ 
induced by the natural maps  $(\sL_aX)_{b\to a}=X_b\to X_a$.

The Reedy model structure on  the category $\bM^{\sC}$ of diagrams $X\colon \sC\to \bM$, also denoted by $\Fun(\sC,\bM)$, is defined as follows:

\begin{enumerate} 

\item a morphism $X\to Y$ is a weak-equivalence (respectively, fibration) if for every $a\in \sC$ the morphism $X_a\to Y_a$ is a weak equivalence (respectively, fibration).

\item a morphism $X\to Y$ is a cofibration if for every $a\in \sC$ the natural morphism
\[ X_a\amalg_{L_aX}L_aY\to Y_a\]
is a cofibration.
\end{enumerate}

It is useful to recall that if $X\to Y$ is a Reedy cofibration in $\bM^{\sC}$, then 
$X_a\to Y_a$ and $L_aX\to L_aY$ are  cofibrations in $\bM$ for every $a\in \sC$, \cite[Prop. 15.3.11]{Hir03}. If $\bM$ is left proper, then also $\bM^{\sC}$ is left proper, \cite[Thm. 15.3.4]{Hir03}.

It is important to point out that  Reedy model structures commute with undercategories and overcategories in the following sense: denoting by $\Delta\colon \bM\to \bM^{\sC}$ the diagonal functor, for any object $A\in \bM$ there exist canonical isomorphisms of model categories
\[ \Delta A \downarrow \bM^{\sC}=(A\downarrow\bM)^{\sC},\qquad 
\bM^{\sC}\downarrow \Delta A=(\bM\downarrow A)^{\sC}\,.\]
This is completely trivial since the above natural isomorphisms of categories preserve weak equivalences and fibrations.

Since our goal is to make deformation theory in $\bM^{\sC}$ we  need to characterise  the flat morphisms.

\begin{proposition}\label{prop.flatnessReedy} 
In the above setup, a morphism $X\to Y$ in $\bM^{\sC}$ is flat if 
$X_a\to Y_a$ is flat in $\bM$ for every $a\in \sC$. If $\bM$ is strong left proper, then also 
$\bM^{\sC}$ is strong left proper.   
\end{proposition}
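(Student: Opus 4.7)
The plan is to reduce flatness in the Reedy model structure to a pointwise check in $\bM$, exploiting the defining feature of direct Reedy categories that weak equivalences and fibrations are detected pointwise, together with the fact that arbitrary limits and colimits in $\bM^{\sC}$ are computed objectwise.

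For the first assertion, I would start with an arbitrary pullback square of trivial fibrations in the undercategory $\bM^{\sC}_X$ and evaluate it at each $a\in\sC$. Since pullbacks in $\bM^{\sC}$ are computed pointwise and trivial fibrations in the Reedy structure are exactly the pointwise trivial fibrations, this produces a pullback square of trivial fibrations in $\bM_{X_a}$. The hypothesis that $X_a\to Y_a$ is flat then implies that pushout along $X_a\to Y_a$ preserves this square as a pullback of trivial fibrations in $\bM_{Y_a}$. Finally, since the pushout $Z\amalg_X Y$ in $\bM^{\sC}$ is also pointwise, $(Z\amalg_X Y)_a = Z_a\amalg_{X_a}Y_a$, reassembling these pointwise pullbacks yields a pullback square in $\bM^{\sC}_Y$ whose legs are pointwise trivial fibrations, hence trivial fibrations in the Reedy model structure. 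This concludes flatness of $X\to Y$.

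For the second assertion, suppose $\bM$ is strong left proper and let $X\to Y$ be a Reedy cofibration. The consequence of \cite[Prop.~15.3.11]{Hir03} recalled just above the proposition ensures that each component $X_a\to Y_a$ is a cofibration in $\bM$, and strong left properness of $\bM$ makes each such cofibration flat. The first part of the proposition then yields flatness of $X\to Y$ in $\bM^{\sC}$, so that every Reedy cofibration is flat, i.e.\ $\bM^{\sC}$ is strong left proper.

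The only point that requires some care is the compatibility between the pointwise pushouts in $\bM^{\sC}$ and the pushout functors $-\amalg_X Y\colon \bM^{\sC}_X\to \bM^{\sC}_Y$ together with their pointwise analogues $-\amalg_{X_a} Y_a\colon \bM_{X_a}\to \bM_{Y_a}$. This is a standard consequence of the fact that the forgetful functors from slice categories to $\bM^{\sC}$ preserve and reflect both pushouts and pullbacks, so the proof is essentially formal once the ``everything is pointwise'' character of the Reedy structure on a direct category is put to work, and I do not expect a genuine obstacle beyond this bookkeeping.
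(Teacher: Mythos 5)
Your argument is correct and is essentially the paper's own proof, just with the pointwise bookkeeping spelled out: both parts reduce flatness to the objectwise statement using that pushouts, pullbacks, and trivial fibrations in the Reedy structure over a direct category are all detected objectwise, and then invoke \cite[Prop.~15.3.11]{Hir03} for the second assertion. No gaps.
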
 

\begin{proof} Let $X\to Y$ be a morphism in $\bM^{\sC}$, since push-outs and pull-backs are made objectwise, and trivial fibrations are detected objectwise, it is clear that if every $X_a\to Y_a$ is flat, then also $X\to Y$ is flat. 

If $\bM$ is strong left proper and  
$X\to Y$  is a cofibration in $\bM^{\sC}$,  we have seen that $X_a\to Y_a$ is a cofibration for every 
$a\in \sC$, hence $X_a\to Y_a$ is flat for every $a$ and therefore also $X\to Y$ is flat.
\end{proof}

\begin{lemma}\label{lem.flatforcones} In the above setup, a cone  $\Delta A\to Y$ in  $\bM^{\sC}$ is flat if and only if $A\to Y_a$ is flat in $\bM$ for every $a\in \sC$.
\end{lemma}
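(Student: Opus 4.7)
The plan is to exploit the fact that in the Reedy model structure on $\bM^{\sC}$ for a direct Reedy category $\sC$, fibrations (hence trivial fibrations) are detected objectwise, and that pushouts, pullbacks and colimits in $\bM^{\sC}$ are all computed objectwise. These facts make the evaluation functors $\mathrm{ev}_a\colon \bM^{\sC}\to \bM$ exact on the relevant diagrams, which is what will let me move flatness from the diagram level to the objectwise level and back.

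First I would make explicit the dictionary: a cone $\Delta A\to Y$ in $\bM^{\sC}$ is the same data as an object of $(\bM_A)^{\sC}$, and under the canonical identification $\Delta A\downarrow \bM^{\sC}=(A\downarrow\bM)^{\sC}$ already recalled in the text, saying the cone is flat means exactly saying the morphism $\Delta A\to Y$ is flat in $\bM^{\sC}$. I would also record that the diagonal $\Delta\colon \bM\to \bM^{\sC}$ preserves trivial fibrations (being objectwise such) and preserves limits (being right adjoint to the colimit functor when colimits exist, and in any case a direct verification is trivial for pullbacks).

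For the ``if'' direction, assume each $A\to Y_a$ is flat. Given any pullback square of trivial fibrations in $\bM^{\sC}_{\Delta A}$, I push it out along $\Delta A\to Y$, then evaluate at an arbitrary $a\in\sC$: the result is the pushout along $A\to Y_a$ of a pullback square of trivial fibrations in $\bM_A$, which is a pullback square of trivial fibrations in $\bM$ by hypothesis. Since trivial fibrations and pullbacks in $\bM^{\sC}$ are objectwise, the pushed-out square is a pullback of trivial fibrations in $\bM^{\sC}_Y$, proving flatness. Conversely, given a pullback square of trivial fibrations in $\bM_A$, apply $\Delta$ to obtain such a square in $\bM^{\sC}_{\Delta A}$; pushing out along $\Delta A\to Y$ and using flatness of the cone yields a pullback square of trivial fibrations in $\bM^{\sC}_Y$, which upon evaluation at $a$ is precisely the pushout of the original square along $A\to Y_a$; hence $A\to Y_a$ is flat.

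The step I would scrutinise most carefully is the objectwise characterisation of trivial fibrations and of pullbacks in $\bM^{\sC}$: for a direct Reedy category the matching objects are terminal, so Reedy fibrations coincide with objectwise fibrations, which makes the argument go through cleanly. Without the direct hypothesis (i.e.\ with genuine inverse morphisms) this would break, and that is where the real subtlety of the statement lies; in the present direct setting, however, the rest is formal bookkeeping, so no serious obstacle is expected.
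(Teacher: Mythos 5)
Your proof is correct and follows essentially the same route as the paper: the paper handles the ``if'' direction by citing Proposition~\ref{prop.flatnessReedy} (whose proof is exactly your objectwise pushout/pullback argument), and proves the converse by applying $\Delta$ to a pullback square of trivial fibrations, pushing out along the cone, and evaluating at $a$, just as you do. Your extra remarks on why trivial fibrations and pullbacks are detected objectwise in the direct Reedy setting are accurate and merely make explicit what the paper leaves implicit.
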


\begin{proof} One implication is proved in Proposition~\ref{prop.flatnessReedy}. 
The converse is an easy consequence of the fact that the diagonal functor 
$\Delta\colon \bM\to \bM^{\sC}$ preserves pull-back squares of trivial fibrations and 
pull-back squares in $\bM^{\sC}$ are detected objectwise.
\end{proof}

\begin{corollary}\label{cor.Reedythick} 
In the above setup, a morphism 
$A\to K$ in $\bM$ is a thickening if and only if $\Delta A\to \Delta K$ is a thickening in $\bM^{\sC}$.
\end{corollary}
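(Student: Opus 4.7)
The plan is to prove both implications by reducing the thickening condition to its objectwise counterpart, exploiting the fact that weak equivalences, isomorphisms, push-outs, and (by Lemma~\ref{lem.flatforcones}) flatness of cones over a constant diagram are all detected pointwise.

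For the nontrivial half, assume $A\to K$ is a thickening and consider a diagram
\[ \xymatrix{\Delta A \ar@/_1pc/[dr]_{f} \ar[r]^{g} & E\ar[d]^h \\ & D} \]
in $\bM^{\sC}$ with $f,g$ flat cones and such that $h\amalg \Id_{\Delta K}\colon E\amalg_{\Delta A}\Delta K\to D\amalg_{\Delta A}\Delta K$ is a weak equivalence (respectively an isomorphism). By Lemma~\ref{lem.flatforcones}, the cones $A\to E_a$ and $A\to D_a$ are flat in $\bM$ for every $a\in\sC$. Since push-outs, weak equivalences, and isomorphisms in $\bM^{\sC}$ are all computed objectwise, evaluating at $a$ yields the square
\[ \xymatrix{A \ar@/_1pc/[dr]_{f_a} \ar[r]^{g_a} & E_a\ar[d]^{h_a} \\ & D_a} \]
together with the property that $h_a\amalg \Id_K\colon E_a\amalg_A K\to D_a\amalg_A K$ is a weak equivalence (resp. isomorphism). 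The thickening hypothesis on $A\to K$ then gives that each $h_a$ is a weak equivalence (resp. isomorphism), whence $h$ is one in $\bM^{\sC}$.

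For the converse, assume $\Delta A\to\Delta K$ is a thickening in $\bM^{\sC}$ and start from a diagram as in \eqref{equ.thickening} in $\bM$ with $f,g$ flat and $h\amalg \Id_K$ a weak equivalence (resp. isomorphism). Applying the diagonal functor $\Delta$ produces a diagram over $\Delta A$; by Lemma~\ref{lem.flatforcones} the cones $\Delta A\to \Delta E$ and $\Delta A\to \Delta D$ are flat. The key observation here is that $\Delta$ is simultaneously a left and a right adjoint (to $\colim$ and to $\lim$ respectively), so it preserves all colimits; in particular $\Delta E\amalg_{\Delta A}\Delta K\cong \Delta(E\amalg_A K)$ and similarly for $D$. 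Hence $\Delta h\amalg \Id_{\Delta K}$ is identified with $\Delta(h\amalg \Id_K)$, which is a weak equivalence (resp. isomorphism) since these classes are detected objectwise. The thickening assumption in $\bM^{\sC}$ forces $\Delta h$ to be a weak equivalence (resp. isomorphism), and the same holds for $h$ by reading off a single object of $\sC$ (which is nonempty).

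I do not expect a genuine obstacle: once the objectwise description of flatness for cones (Lemma~\ref{lem.flatforcones}) is in hand, the only bookkeeping is to observe that $\Delta$ commutes with the relevant push-outs, and this is immediate from the adjunction $\colim\dashv\Delta\dashv\lim$.
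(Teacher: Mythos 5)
Your proof is correct and follows essentially the same route as the paper's: both directions reduce the thickening condition to its objectwise counterpart via Lemma~\ref{lem.flatforcones} together with the facts that push-outs, weak equivalences and isomorphisms in $\bM^{\sC}$ are computed objectwise and that $\Delta$ commutes with push-outs. (Only a cosmetic slip in the parenthetical: $\Delta$ is the \emph{right} adjoint of $\colim$ and the \emph{left} adjoint of $\lim$, as your displayed adjunction $\colim\dashv\Delta\dashv\lim$ correctly records; this does not affect the argument.)
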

 
\begin{proof} Since the diagonal functor $\Delta$ commutes with push-outs, its application to the diagram 
\eqref{equ.thickening} immediately implies that if $\Delta A\to \Delta K$ is a thickening then 
$A\to K$ is a thickening.  

If  $\Delta A\to Y$ is flat, then 
for every 
$a\in \sC$ we have 
\[\left(\Delta K\amalg_{\Delta A}Y\right)_a=K\amalg_{A}Y_a\]
and the morphism $A\to Y_a$ is flat by Lemma~\ref{lem.flatforcones}: this implies that 
that if $A\to K$ is a thickening then 
$\Delta A\to \Delta K$ is a thickening.  
\end{proof}

Thus, according to Proposition~\ref{prop.flatnessReedy} and the results of Section~\ref{sec.recalldefomodel}, there exists a good deformation theory of  diagrams in a strong left proper model category $\bM$ over a direct Reedy index category $\sC$. 

If we restrict to diagonal thickenings in $\bM^{\sC}$, i.e., to thickenings of the form 
$\Delta A\to \Delta K$ with $A\to K$ a thickening in
$\bM$, by Corollary~\ref{cor.Reedythick} we obtain the following equivalent description of deformations.

\begin{definition} 
Given a thickening $p\colon A\to K$  in $\bM$ and a fibrant diagram
\[X\in (\bM_K)^{\sC}=(K\downarrow \bM)^{\sC}=\Delta K\downarrow \bM^{\sC},\]
a deformation of $X$ along $p$  is a commutative square
\[ \xymatrix{	\Delta A\ar[d]^p\ar[r]^{f} & \sX\ar[d] \\
\Delta K\ar[r]& X	} \]
such for every $a\in \sC$ the map $f_a\colon A\to \sX_a$ is flat and the  map $\sX_a\amalg_AK\to X_a$ is a weak equivalence.
\end{definition}

The main goal of this paper is to study deformations of a diagram with values in the strong left proper 
model category $\CDGA^{\le0}_{\K}$ over an element in the full subcategory 
$\DGArt_{\K}^{\le 0}$ of 
local Artinian DG-algebras with residue field $\K$. This make sense since, according to \cite[Prop. 3.5]{MM} every  surjective morphism in $\DGArt_{\K}^{\le 0}$ is a thickening in the model category $\CDGA^{\le0}_{\K}$, and every diagram in $\Fun(\sC,\CDGA^{\le0}_{\K})$ is Reedy fibrant:
for a direct Reedy category $\sC$, 
a deformation of a diagram $X\in \Fun(\sC,\CDGA^{\le0}_{\K})$ over 
$A\in \DGArt_{\K}^{\le 0}$ is a \emph{flat diagram} 
$X_A\in \Fun(\sC,\CDGA^{\le0}_{A})$ equipped with a weak equivalence 
$X_A\amalg_A\K\to X$, where $X_A \amalg_A \K$ denotes the diagram defined by $(X_A\amalg_A\K)_a=X_{A,a}\amalg_A\K$ for every $a\in\sC$.

\bigskip

\section{Lifting of trivial idempotents}\label{section.trivialidempotents}

By definition, a trivial idempotent in a model category is an endomorphism $e\colon X\to X$  which is a weak equivalence satisfying $e^2=e$. 
The next goal is to prove a lifting result for trivial idempotents that will be essential for the computation of the DG-Lie algebra controlling deformations of diagrams of algebras over direct Reedy categories. We first need a preliminary lemma.

\begin{lemma}\label{lem.latchingtrivialidempotent} 
Let $\bM$ be a left proper model category and $\sC$ a direct Reedy category.
Assume it is given a Reedy cofibration $i\colon P\to R$, an element $a\in \sC$ and a 
morphism of diagrams 
$e\colon \sL_aR\to \sL_aR$ that is a trivial idempotent satisfying $ei=i$. Then  
\[  e\colon P_a\amalg_{L_aP}L_aR\to P_a\amalg_{L_aP}L_aR,\]
is a trivial idempotent in the model category $\bM$. 
\end{lemma}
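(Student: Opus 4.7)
The plan is to reduce the statement to a routine application of the gluing lemma in the left proper model category $\bM$. Applying the colimit functor $L_a=\colim_{[\sC,a)}$ to the identity $e\circ i=i$ yields $L_ae\circ L_ai=L_ai$, so the pair $(\id_{P_a},L_ae)$ induces an endomorphism
\[ E=\id_{P_a}\amalg_{L_aP}L_ae\colon\; P_a\amalg_{L_aP}L_aR\longrightarrow P_a\amalg_{L_aP}L_aR. \]
Idempotency $E^2=E$ is immediate from functoriality of pushouts combined with $e^2=e$, so the substantive task is to show that $E$ is a weak equivalence.

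The key intermediate step is to prove that $L_ae$ itself is a weak equivalence in $\bM$. I would first check that the induced morphism of diagrams $\sL_aP\to\sL_aR$ is a Reedy cofibration in $\bM^{[\sC,a)}$: the latching category of $[\sC,a)$ at an object $(b\to a)$ is canonically identified with $[\sC,b)$, so the Reedy cofibration condition for $\sL_aP\to\sL_aR$ at $(b\to a)$ reduces verbatim to the Reedy cofibration condition for $i$ at $b$, which is part of the hypothesis. Because $[\sC,a)$ is a direct Reedy category, this Reedy cofibration coincides with a projective cofibration; equivalently, $\sL_aR$ equipped with this map is a cofibrant object of the undercategory $\sL_aP\downarrow\bM^{[\sC,a)}$.

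Next, I observe that $\colim\colon\bM^{[\sC,a)}\to\bM$ is left Quillen for the projective structure, since its right adjoint (the constant diagram functor) preserves the objectwise notions of fibration and weak equivalence; it therefore induces a left Quillen functor on the undercategories $\sL_aP\downarrow\bM^{[\sC,a)}\to L_aP\downarrow\bM$. The hypothesis $ei=i$ makes $e\colon\sL_aR\to\sL_aR$ a morphism in this undercategory, and it remains a weak equivalence there; since its source and target are cofibrant, Ken Brown's lemma gives that $L_ae$ is a weak equivalence in $L_aP\downarrow\bM$, hence in $\bM$.

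Finally, I apply the gluing lemma in the left proper model category $\bM$ to the self-map of pushout cospans
\[ \bigl(L_aR\xleftarrow{L_ai}L_aP\to P_a\bigr)\xrightarrow{(L_ae,\,\id,\,\id)}\bigl(L_aR\xleftarrow{L_ai}L_aP\to P_a\bigr), \]
whose vertical components are all weak equivalences and whose cospan contains the cofibration $L_ai$ (a standard consequence of $i$ being a Reedy cofibration, already cited in the excerpt); the induced map on pushouts, namely $E$, is therefore a weak equivalence. The only nontrivial point is the intermediate claim that $L_ae$ is a weak equivalence: the subtlety is that $\sL_aR$ need not itself be Reedy cofibrant in $\bM^{[\sC,a)}$, since $\sL_aP$ is not assumed to be, and the undercategory reformulation above is precisely what makes the argument go through.
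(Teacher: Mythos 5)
Your proof is correct. It uses the same three ingredients as the paper's proof --- the identification $[\sC,b)\simeq[[\sC,a),f)$ showing that $\sL_aP\to\sL_aR$ is a Reedy cofibration, the gluing lemma in a left proper model category, and the homotopy invariance of $\colim_{[\sC,a)}$ on cofibrant input --- but performs the two commuting colimit constructions in the opposite order. The paper first forms the push-out of $\sL_aP\to\sL_aR$ along $\sL_aP\to\Delta P_a$, obtaining the Reedy cofibrant diagram $Q_{b\to a}=P_a\amalg_{P_b}R_b$ in $\bM_{P_a}^{[\sC,a)}$, checks objectwise via the gluing lemma that $e$ acts on each $Q_{b\to a}$ by a weak equivalence, and only then takes $\colim_{[\sC,a)}$, using that $[\sC,a)$ has fibrant constants and that colimits commute with push-outs. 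You instead take the colimit first, proving that $L_ae$ is a weak equivalence by Ken Brown's lemma applied to the left Quillen functor induced by $\colim$ on the undercategory $\sL_aP\downarrow\bM^{[\sC,a)}$ --- the undercategory being exactly the right device, as you note, since $\sL_aR$ need not be Reedy cofibrant as an absolute object --- and then apply the gluing lemma once to the cospan $P_a\leftarrow L_aP\xrightarrow{L_ai}L_aR$. Your route isolates the clean intermediate statement that $L_ae$ is a weak equivalence and confines left properness to a single application of the gluing lemma; the paper's route avoids any discussion of Quillen adjunctions on undercategories by making the relevant diagram cofibrant under $\Delta P_a$ before taking colimits. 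One phrasing point: you do not actually need the projective model structure on $\bM^{[\sC,a)}$ to exist as such; since $[\sC,a)$ is direct, the Reedy structure already has levelwise fibrations and weak equivalences, which is all that is required for $\colim\dashv\Delta$ to be a Quillen pair.
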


\begin{proof}
For every diagram $X$ in $\bM^{\sC}$ and every $a\in \sC$ we may write 
$L_aX=\colim \sL_aX$, where 
\[ \sL_aX\colon [\sC,a)\to \bM,\qquad (\sL_aX)_{b\to a}=X_b\,.\]

For every  morphism $f\colon b\to a$ in $[\sC,a)$ there exists a natural bijection
\[[\sC,b)\xrightarrow{\;\simeq\;}[[\sC,a),f)\,,\qquad \xymatrix{c\ar[r]^g&b}\quad\mapsto \quad
\begin{matrix}\xymatrix{c\ar[r]^g\ar[dr]_{fg}&b\ar[d]^f\\ &a}\end{matrix}\,,\]  
and this implies  that the latching functor 
\[ \sL_a\colon \bM^{\sC}\to \bM^{[\sC,a)},\qquad X\mapsto \sL_aX,\]
preserves fibrations, cofibrations and weak equivalences.

In particular $\sL_aP\to\sL_aR$ is a Reedy cofibration, and taking its push-out along the 
natural map $\sL_aP\to \Delta P_a$ we get  a Reedy cofibration  
\[ \Delta P_a\to \Delta P_a\amalg_{\sL_aP}\sL_aR\]
in $\bM^{[\sC,a)}$: equivalently the diagram  
\[ Q\colon [\sC,a)\to \bM_{P_a},\qquad Q_{b\to a}=P_a\amalg_{P_b}R_b,\]
is Reedy cofibrant in  $\bM_{P_a}^{[\sC,a)}$.

For every $b\to a$, since $P_b\to R_b$ is a cofibration and  $\bM$ is left proper, by gluing lemma the idempotent $e\colon P_a\amalg_{P_b}R_b\to P_a\amalg_{P_b}R_b$ is a weak equivalence. 
Since $[\sC,a)$ has fibrant constants, the colimit functor $\colim \colon\bM_{P_a}^{[\sC,a)}\to \bM_{P_a}$ preserves weak equivalences between cofibrant objects and the conclusion follows from the natural isomorphism
\[ \colim_{[\sC,a)}Q=P_a\amalg_{L_aP}L_aR \]
that holds since colimits commute with push-outs.
\end{proof}


We are now ready to use Lemma~\ref{lem.latchingtrivialidempotent} together \cite[Theorem 6.12]{MM} in order to prove the main result of this section. 
For every morphism $A\to B$ in $\CDGA^{\le0}_{A}$ and every direct Reedy category $\sC$ we shall denote by $-\otimes_AB\colon \Fun(\sC,\CDGA^{\le0}_{A})\to \Fun(\sC,\CDGA^{\le0}_{B})$ the natural functor induced by composition with the usual push-out map
$-\otimes_AB\colon \CDGA^{\le0}_{A}\to \CDGA^{\le0}_{B}$.

\begin{theorem}[Lifting of trivial idempotents]\label{thm.lifttrivialidemp} 
Let $\sC$ be a direct Reedy category,  $A\to B$ a surjective morphism 
in $\DGArt_{\K}^{\le 0}$ and  $i\colon X\to Y$ a cofibration of flat diagrams in 
$\Fun(\sC,\CDGA^{\le0}_{A})$. 

Then every trivial idempotent $\epsilon$ of $Y\otimes_AB$, commuting with 
the cofibration $X\otimes_AB\to  Y\otimes_AB$, lifts to a trivial idempotent 
$e_A\colon Y\to Y$ such that $e_Ai=i$.
\end{theorem}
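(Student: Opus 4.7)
The plan is to build the endomorphism $e_A$ objectwise, by induction on the Reedy degree of $a\in\sC$, at each stage reducing the problem to the affine lifting theorem \cite[Theorem 6.12]{MM} by means of Lemma~\ref{lem.latchingtrivialidempotent}. Fix $n\ge 0$ and assume by induction that one has already constructed, for every $b\in\sC$ with $\deg(b)<n$, a trivial idempotent $e_{A,b}\colon Y_b\to Y_b$ of commutative DG-$A$-algebras satisfying $e_{A,b}\,i_b=i_b$ and $e_{A,b}\otimes_AB=\epsilon_b$, and in such a way that these morphisms assemble into a morphism of the sub-diagram of $Y$ spanned by the objects of degree $<n$.

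Fix now $a\in\sC$ of degree $n$. Since every non-identity morphism in $\sC$ strictly raises the degree, for every object $b\to a$ of $[\sC,a)$ we have $\deg(b)<n$, so the inductive idempotents assemble into a well-defined morphism of diagrams
\[e\colon\sL_aY\longrightarrow\sL_aY\]
which is a trivial idempotent satisfying $e\circ\sL_ai=\sL_ai$. Applying Lemma~\ref{lem.latchingtrivialidempotent} to the Reedy cofibration $i$ at $a$, we obtain a trivial idempotent
\[\tilde e_A\colon X_a\amalg_{L_aX}L_aY\longrightarrow X_a\amalg_{L_aX}L_aY,\]
which is the identity on $X_a$ and equal to the colimit map $L_ae$ on $L_aY$. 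By definition of Reedy cofibration the induced map $X_a\amalg_{L_aX}L_aY\to Y_a$ is a cofibration in $\CDGA_A^{\le0}$; the flatness requirements needed to invoke \cite[Theorem 6.12]{MM} follow from Proposition~\ref{prop.flatnessReedy}, from the assumed flatness of $X$ and $Y$, and from the stability of flat maps under push-outs and composition proved in \cite{MM}. Moreover $\tilde e_A$ and $\epsilon_a$ are compatible after $-\otimes_AB$ because $\epsilon$ is a morphism of diagrams commuting with $i\otimes_AB$. The affine lifting theorem then produces $e_{A,a}\colon Y_a\to Y_a$, a trivial idempotent extending $\tilde e_A$ along the above cofibration, satisfying $e_{A,a}\,i_a=i_a$ and $e_{A,a}\otimes_AB=\epsilon_a$. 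The base case $n=0$ is identical with $[\sC,a)$ empty, in which case $L_aX=L_aY$ is the initial object and $X_a\amalg_{L_aX}L_aY=X_a$, the datum on the source being simply $\id_{X_a}$.

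The main point still to be verified is that the collection $\{e_{A,a}\}_{a\in\sC}$ is a morphism of diagrams, i.e., naturality in $\sC$. This is built into the construction: for every non-identity morphism $b\to a$ in $\sC$ the composition $Y_b\to L_aY\to Y_a$ equalises $e_{A,a}$ and $e_{A,b}$, since $e_{A,a}$ agrees with $L_ae$ on $L_aY$ by construction; for identities there is nothing to check. Once naturality is granted, idempotency, the relations $e_A\,i=i$ and $e_A\otimes_AB=\epsilon$, and the weak equivalence property are all detected objectwise in the Reedy model structure and hold by the inductive construction. The delicate step is precisely the interplay between the affine lifting and the latching pushout: one has to be sure that the trivial idempotent produced at stage $a$ restricts to the one already fixed on the latching object, which is exactly what Lemma~\ref{lem.latchingtrivialidempotent} and the compatibility clause in \cite[Theorem 6.12]{MM} guarantee.
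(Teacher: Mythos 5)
Your proposal is correct and follows essentially the same route as the paper: induction on the Reedy degree, assembling the previously constructed idempotents into a trivial idempotent of $\sL_aY$, passing through Lemma~\ref{lem.latchingtrivialidempotent} to get a trivial idempotent on the latching push-out $X_a\amalg_{L_aX}L_aY$, and then invoking the affine lifting theorem of \cite{MM} along the cofibration $X_a\amalg_{L_aX}L_aY\to Y_a$. The only cosmetic difference is that the paper phrases the induction via ideals of $\sC$, adding one object of minimal degree at a time, whereas you add a whole degree level at once; your explicit check of naturality is a point the paper leaves implicit.
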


\begin{proof} Define an ideal of $\sC$ as a full subcategory $\sB$ such that if $b\in \sB$ and 
$a\to b$ is a morphism in $\sC$, then also $a\in \sB$.
By induction on the degree of objects in $\sC$  it is sufficient to prove that 
if the trivial idempotent $e_A$ as in the theorem is defined for the restriction of 
$Y\colon \sC\to \CDGA^{\le0}_{A}$ to an ideal $\sB\subset\sC$, then 
$e_A$ can be extended to  the restriction of $Y$ to the ideal 
$\sB\cup\{a\}$, where $a\in \sC-\sB$ is any element of minimum degree.

The trivial idempotent of
$Y_{|\sB}$ induces a trivial idempotent on the latching functor $\sL_aY$ and then,  
according to Lemma~\ref{lem.latchingtrivialidempotent}, we have a trivial idempotent in $\CDGA^{\le0}_{A}$:
\[ L_ae\colon  X_a\otimes_{L_aX}L_aY\to X_a\otimes_{L_aX}L_aY\,.\]
Since the reduction of $L_ae$ along $B$ extends to the trivial idempotent $\epsilon_a$ of $Y_a\otimes_A B$, according to 
\cite[Theorem 6.12]{MM} there exists a trivial idempotent $e_a\colon Y_a\to Y_a$ lifting $\epsilon_a$ end extending 
$L_ae$. 
\end{proof}

As in \cite[Section 6]{MM}, Theorem~\ref{thm.lifttrivialidemp}  has a number of important consequences on the lifting of factorisations  and the push-out of deformations along trivial cofibrations. We write here only the statements, since the proofs are exactly the same, mutatis mutandis, of the corresponding results of the above mentioned paper.
For simplicity, we shall call (C,FW)-factorisation and (CW,F)-factorisation the two functorial factorisations given by model category axioms.

\begin{corollary}[cf. {\cite[Thm. 6.13]{MM}\,}]\label{cor.liftingfactorization} 
Let $A\to B$ be a surjection in $\DGArt_{\K}^{\le 0}$ and consider a morphism $f\colon P\to M$ in $\Fun(\sC,\CDGA_A^{\le 0})$ between flat diagrams.
Then every (C,FW)-factorisation of the reduction
\[ \bar{f}\colon \bar{P}=P\otimes_AB\to \bar{M}=M\otimes_AB \]
lifts to a (C,FW)-factorisation of $f$. In other words, for every factorisation $\bar{P}\xrightarrow{\sC} \bar{Q}\xrightarrow{\sF\sW} \bar{M}$ of $\bar{f}$ there exists a commutative diagram
\[ \xymatrix{  	P \ar@{->}[d]\ar@{->}[r]^{\sC} & Q \ar@{->}[d]\ar@{->}[r]^{\sF\sW} & M \ar@{->}[d] \\ 
\bar{P} \ar@{->}[r]^{\sC} & \bar{Q}\ar@{->}[r]^{\sF\sW} & \bar{M} 	} \]
in $\Fun(\sC,\CDGA_A^{\le 0})$, where the upper row reduces to the bottom row applying the functor $-\otimes_AB$ and the vertical morphisms are the natural projections.
\end{corollary}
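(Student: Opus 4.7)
The plan is to adapt the argument of \cite[Thm. 6.13]{MM} to the Reedy-diagram setting, using Theorem~\ref{thm.lifttrivialidemp} as the essential ingredient that replaces \cite[Thm. 6.12]{MM}.

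First, I would apply the functorial (C,FW)-factorisation of the Reedy model structure on $\Fun(\sC,\CDGA_A^{\le 0})$ to $f\colon P\to M$, obtaining a factorisation
\[ P\xrightarrow{\;j\;}Q_0\xrightarrow{\;q_0\;}M \]
with $j$ a Reedy cofibration and $q_0$ a Reedy trivial fibration. Since $P$ is flat and Reedy cofibrations are flat by Proposition~\ref{prop.flatnessReedy}, the diagram $Q_0$ is flat as well. Reducing along $A\to B$ produces a (C,FW)-factorisation
$\bar{P}\xrightarrow{\bar{\jmath}}\bar{Q}_0\xrightarrow{\bar{q}_0}\bar{M}$
of $\bar f$, which sits alongside the prescribed factorisation $\bar{P}\xrightarrow{\bar{\imath}}\bar{Q}\xrightarrow{\bar{p}}\bar{M}$.

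Next, I would compare these two (C,FW)-factorisations of $\bar f$. Forming the push-out $\bar{Q}_0\amalg_{\bar{P}}\bar{Q}$ and factoring its canonical map to $\bar M$ as a Reedy cofibration followed by a Reedy trivial fibration $\bar{Q}_0\amalg_{\bar{P}}\bar{Q}\to\bar T\to\bar M$, the two natural maps $\bar{Q}_0\to\bar T$ and $\bar{Q}\to\bar T$ become trivial cofibrations over $\bar M$. Lifting the identities of $\bar Q$ and $\bar{Q}_0$ against the trivial fibrations $\bar p$ and $\bar{q}_0$ (combined with the obvious retractions furnished by the lifting axiom inside $\bar T$) yields compatible maps $\bar{Q}_0\rightleftarrows\bar{Q}$ intertwining $\bar{\jmath},\bar{\imath}$ and the projections to $\bar M$. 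By the same careful assembly as in \cite[Thm. 6.13]{MM}, these morphisms can be glued into a genuine trivial idempotent $\bar{\epsilon}\colon\bar{Q}_0\to\bar{Q}_0$ satisfying $\bar{\epsilon}\bar{\jmath}=\bar{\jmath}$ and $\bar{q}_0\bar{\epsilon}=\bar{q}_0$, whose image sub-diagram coincides with $\bar Q$ and whose restriction realises the factorisation through $\bar Q$.

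Having such an idempotent, I would invoke Theorem~\ref{thm.lifttrivialidemp} applied to the cofibration of flat diagrams $j\colon P\to Q_0$, obtaining a lift $e\colon Q_0\to Q_0$ of $\bar{\epsilon}$ that is a trivial idempotent with $ej=j$. Defining $Q\subseteq Q_0$ to be the image sub-diagram $e(Q_0)$, the morphism $j$ factors through a cofibration $P\to Q$; moreover the restriction $q_0|_Q\colon Q\to M$ is a trivial fibration, since $Q\hookrightarrow Q_0$ is a weak equivalence and $q_0$ is one. By construction $e$ reduces to $\bar\epsilon$ modulo $B$, so that $Q\otimes_AB=\bar Q$, yielding exactly the desired lifted factorisation.

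The main obstacle is the production, in the second paragraph, of an \emph{honest} trivial idempotent on $\bar{Q}_0$ rather than merely a self-weak-equivalence: this is the same delicate point that makes the single-algebra case \cite[Thm. 6.13]{MM} nontrivial, and upgrading it to the diagram setting will require gluing the idempotent together along the Reedy filtration, using the latching-object compatibility guaranteed by Lemma~\ref{lem.latchingtrivialidempotent} in order to apply Theorem~\ref{thm.lifttrivialidemp} uniformly across the indexing category $\sC$.
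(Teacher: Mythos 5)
Your overall strategy---realise $\bar{Q}$ as the image of an \emph{honest} trivial idempotent on a liftable object, lift that idempotent via Theorem~\ref{thm.lifttrivialidemp}, and take the image of the lift---is the one intended here (the paper itself simply defers to \cite[Thm.~6.13]{MM} ``mutatis mutandis''). But your second paragraph has a genuine gap, located exactly at the point you yourself flag as the main obstacle. The comparison maps $\bar{Q}_0\rightleftarrows\bar{Q}$ produced by the lifting axiom are only mutually inverse up to homotopy, so their composite $\bar{Q}_0\to\bar{Q}\to\bar{Q}_0$ is not idempotent, and there is no reason for $\bar{Q}$ to be (isomorphic to) a retract of $\bar{Q}_0$ at all: an honest idempotent $\bar{\epsilon}$ of $\bar{Q}_0$ whose image realises the given factorisation need not exist. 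The honest idempotent lives naturally on your auxiliary object $\bar{T}$: since $\bar{Q}\to\bar{T}$ is a trivial cofibration and $\bar{p}\colon\bar{Q}\to\bar{M}$ is a fibration, there is a retraction $r\colon\bar{T}\to\bar{Q}$ under $\bar{P}$ and over $\bar{M}$, and $\bar{e}=(\bar{Q}\hookrightarrow\bar{T})\circ r$ satisfies $\bar{e}^2=\bar{e}$, is a weak equivalence, and has image $\bar{Q}$. This, however, exposes a second gap: Theorem~\ref{thm.lifttrivialidemp} requires a cofibration of flat diagrams \emph{over $A$} whose reduction carries the idempotent, and your $\bar{T}$ is manufactured entirely over $B$ (as a factorisation of $\bar{Q}_0\amalg_{\bar{P}}\bar{Q}\to\bar{M}$), so it comes with no lift $P\to T$ to which the theorem could be applied.

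Both gaps are repaired by building the ambient object over $A$ from the start. Form $M'=M\times_{\bar{M}}\bar{Q}$ in $\Fun(\sC,\CDGA_A^{\le 0})$; the projection $M'\to M$ is a trivial fibration (a pullback of $\bar{p}$), and one checks $M'\otimes_AB=\bar{Q}$ using the surjectivity of $M\to\bar{M}$ and of $\bar{p}$. Factor the canonical map as $P\to T\to M'$ (cofibration, then trivial fibration); then $P\to T\to M$ is a (C,FW)-factorisation of $f$ with $T$ flat, and its reduction comes equipped with a trivial fibration $\pi\colon\bar{T}\to\bar{Q}$ under $\bar{P}$ and over $\bar{M}$. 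Lifting $\bar{\imath}$ against $\pi$ gives a section $s$ with $s\bar{\imath}=(\bar{P}\to\bar{T})$, and $\bar{e}=s\pi$ is an honest trivial idempotent on $\bar{T}=T\otimes_AB$ commuting with the cofibration $\bar{P}\to\bar{T}$ and with the projection to $\bar{M}$. Now Theorem~\ref{thm.lifttrivialidemp} applies verbatim to $P\to T$; with $e$ the lifted idempotent and $Q=e(T)$, your final paragraph goes through: $P\to Q$ is a retract of $P\to T$, hence a cofibration; $Q\otimes_AB=\bar{e}(\bar{T})\cong\bar{Q}$ via the splitting $T\cong Q\oplus\ker(e)$; and $Q\to M$ is a weak equivalence by two-out-of-three and surjective by flatness of $Q$ and $M$ together with the nilpotence of $\ker(A\to B)$, hence a trivial fibration.
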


\begin{corollary}[cf. {\cite[Thm. 6.15]{MM}\,}]\label{cor.liftingfactorization2}
Let $A\to B$ be a surjection in $\DGArt_{\K}^{\le 0}$ and consider a morphism $f\colon P\to M$ in $\Fun(\sC,\CDGA_A^{\le 0})$ between flat diagrams.
Then every (CW,F)-factorisation of the reduction
\[ \bar{f}\colon \bar{P}=P\otimes_AB\to \bar{M}=M\otimes_AB \]
lifts to a (CW,F)-factorisation of $f$. In other words, for every factorisation $\bar{P}\xrightarrow{CW} \bar{Q}\xrightarrow{F} \bar{M}$ of $\bar{f}$ there exists a commutative diagram
\[ \xymatrix{  	P \ar@{->}[d]\ar@{->}[r]^{CW} & Q \ar@{->}[d]\ar@{->}[r]^{F} & M \ar@{->}[d] \\ 
\bar{P} \ar@{->}[r]^{CW} & \bar{Q}\ar@{->}[r]^{F} & \bar{M} 	} \]
in $\Fun(\sC,\CDGA_A^{\le 0})$, where the upper row reduces to the bottom row applying the functor $-\otimes_AB$ and the vertical morphisms are the natural projections.
\end{corollary}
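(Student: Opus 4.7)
The strategy is to transport the proof of \cite[Thm.~6.15]{MM}, originally stated in $\CDGA_A^{\le 0}$, to the Reedy model category $\Fun(\sC,\CDGA_A^{\le 0})$. Every technical ingredient of that proof is available here: the Reedy model structure supplies functorial (CW,F)-factorisations; the base change $-\otimes_A B$ preserves trivial cofibrations between flat diagrams (flat pushouts preserve weak equivalences, by strong left properness and Proposition~\ref{prop.flatnessReedy}) and fibrations (objectwise surjectivity in strictly negative degrees is preserved by any base change in $\DGArt_{\K}^{\le 0}$); and Theorem~\ref{thm.lifttrivialidemp} provides the lifting of trivial idempotents.

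The plan runs as follows. First, apply the Reedy (CW,F)-factorisation to $f$, producing $P\xrightarrow{j_A} Q_A\xrightarrow{p_A} M$ with $j_A$ a trivial cofibration and $p_A$ a fibration; by strong left properness, $Q_A$ is flat. Tensoring with $B$ then produces a (CW,F)-factorisation $\bar P\xrightarrow{\bar j_A}\bar Q_A\xrightarrow{\bar p_A}\bar M$ of $\bar f$. Any two (CW,F)-factorisations of the same morphism are linked by the lifting axiom: applied to the square whose left arrow is the trivial cofibration $\bar j_A$ and whose right arrow is the fibration $p$, and conversely to the square with left arrow $j$ and right arrow $\bar p_A$, one obtains mutually compatible weak equivalences $\bar Q_A\to \bar Q$ and $\bar Q\to \bar Q_A$ over $\bar P$ and $\bar M$.

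From these equivalences one manufactures, exactly as in \cite[Thm.~6.15]{MM}, a trivial idempotent $\epsilon$ of $\bar Q_A$ commuting with $\bar j_A$. Theorem~\ref{thm.lifttrivialidemp} then lifts $\epsilon$ to a trivial idempotent $e_A\colon Q_A\to Q_A$ over $A$ satisfying $e_A j_A=j_A$. Splitting $e_A$ diagramwise yields a flat diagram $Q$ over $A$ which is a retract of $Q_A$, together with an induced trivial cofibration $P\to Q$ (since $j_A$ factors through the splitting) and a fibration $Q\to M$ (the restriction of $p_A$ along the retraction). By construction this factorisation reduces to the prescribed $\bar P\xrightarrow{j}\bar Q\xrightarrow{p}\bar M$ modulo $B$.

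The main obstacle I expect is the idempotent-production step: one must carefully adjust the mutually inverse weak equivalences between $\bar Q$ and $\bar Q_A$ so that their composition is strictly idempotent (not merely idempotent up to homotopy) and strictly commutes with $\bar j_A$, so as to meet the hypotheses of Theorem~\ref{thm.lifttrivialidemp}. This is precisely the delicate bookkeeping carried out in \cite[Thm.~6.15]{MM} in the affine case; since the idempotent lifting theorem and the preservation properties of $-\otimes_A B$ hold in the Reedy setup in exactly the same form, the same construction transports verbatim, and no new ideas beyond those already developed in Sections~\ref{sec.diagramoverReedy}--\ref{section.trivialidempotents} are needed.
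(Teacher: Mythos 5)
Your proposal follows exactly the route the paper takes: the paper gives no independent proof of this corollary, stating only that the argument of \cite[Thm.~6.15]{MM} carries over mutatis mutandis once the affine idempotent-lifting result is replaced by Theorem~\ref{thm.lifttrivialidemp}, which is precisely the transport you describe. Your identification of the strict-idempotent bookkeeping as the delicate step correctly locates where the real work sits in \cite{MM}.
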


\begin{corollary}[cf. {\cite[Cor. 6.14]{MM}\,}]\label{cor.flattrivialcofib}
Let $A\in \DGArt_{\K}^{\le 0}$ and consider a morphism $f\colon P\to M$ in $\Fun(\sC,\CDGA_A^{\le 0})$ between flat diagrams.
Then $f$ is a Reedy cofibration if and only if its reduction $\bar{f}\colon P\otimes_A\K \to M\otimes_A \K$ is a Reedy cofibration  in $\Fun(\sC,\CDGA_\K^{\le 0})$.
\end{corollary}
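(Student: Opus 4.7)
The forward implication is immediate: the push-out functor $-\otimes_A\K\colon \CDGA^{\le 0}_A \to \CDGA^{\le 0}_\K$ preserves cofibrations and commutes with colimits, so $L_a(P\otimes_A\K) = (L_aP)\otimes_A\K$, and the latching map of $\bar f$ at any $a$ is just the reduction of the latching map of $f$, hence a cofibration in $\CDGA^{\le 0}_\K$.

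For the converse, the strategy is to exhibit $f$ as a retract of a cofibration. I begin by taking a $(C,FW)$-factorisation $P \xrightarrow{j} N \xrightarrow{q} M$ of $f$ in $\Fun(\sC,\CDGA^{\le 0}_A)$. Since $P$ is flat, $j$ is flat by strong left properness (Proposition~\ref{prop.flatnessReedy}), and flat morphisms are closed under composition, so $N$ is flat. By Corollary~\ref{cor.liftingfactorization} the reduction $\bar P \xrightarrow{\bar j} \bar N \xrightarrow{\bar q} \bar M$ is a $(C,FW)$-factorisation of $\bar f$. Since $\bar f$ is a cofibration and $\bar q$ a trivial fibration, the lifting axiom produces $r\colon \bar M \to \bar N$ with $\bar q r = \id_{\bar M}$ and $r\bar f = \bar j$. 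By two-out-of-three $r$ is a weak equivalence, so $e := r\bar q$ is a trivial idempotent on $\bar N$, and $e\bar j = r\bar f = \bar j$. Applying Theorem~\ref{thm.lifttrivialidemp}, I lift $e$ to a trivial idempotent $\tilde e\colon N \to N$ with $\tilde e j = j$.

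The final step is to split $\tilde e$. Define the subdiagram $N_1 \subset N$ by $(N_1)_a := \{n \in N_a : \tilde e_a(n) = n\}$; naturality of $\tilde e$ makes this a sub-diagram, and $\tilde e$ factors as $N \xrightarrow{\pi} N_1 \xrightarrow{\iota} N$ with $\pi\iota = \id_{N_1}$. Since each $(N_1)_a$ is a direct $A$-module summand of $N_a$ in every internal degree, $N_1$ is flat by Lemma~\ref{lem.flatforcones}, and $\tilde e j = j$ gives a factorisation $j = \iota j'$ with $j' := \pi j$. A direct computation in the reduction shows $\bar N_1 = \Image(r)$ and that $\overline{q\iota}\colon \bar N_1 \to \bar M$ is the inverse of $r$ corestricted to $\bar N_1$; hence $\overline{q\iota}$ is an isomorphism. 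A Nakayama-type argument then promotes $q\iota\colon N_1 \to M$ itself to an isomorphism. Setting $s := \iota(q\iota)^{-1}\colon M \to N$ gives $qs = \id_M$ and $sf = \iota(q\iota)^{-1}(qj) = \iota(q\iota)^{-1}(q\iota)j' = \iota j' = j$, exhibiting $f$ as a retract of the cofibration $j$, and cofibrations are closed under retracts.

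The main obstacle is the Nakayama step, namely showing that a morphism of flat diagrams in $\Fun(\sC,\CDGA^{\le 0}_A)$ whose reduction modulo $\mA$ is an isomorphism is itself an isomorphism. I would prove this levelwise and degreewise by induction on the length of $A$, using the short exact sequences $0 \to \mA^n/\mA^{n+1} \to A/\mA^{n+1} \to A/\mA^n \to 0$ (with $\mA^n/\mA^{n+1}$ a $\K$-vector space), the degreewise flatness of source and target, and the five-lemma to bootstrap from the known isomorphism in the residue reduction to each successive thickening.
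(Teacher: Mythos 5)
Your argument is correct and is essentially the proof of \cite[Cor. 6.14]{MM} that the paper invokes for this statement: the forward implication because $-\otimes_A\K$ commutes with latching objects and is objectwise left Quillen, and the converse by lifting the trivial idempotent $r\bar q$ through Theorem~\ref{thm.lifttrivialidemp}, splitting it, and exhibiting $f$ as a retract of the cofibration $j$. The only simplification you are missing is that the final ``Nakayama-type'' step is precisely the statement that $A\to\K$ is a thickening (Definition~\ref{def.M(K)} in its ``isomorphism'' form, used the same way in the proof of Lemma~\ref{lem.strict=large_cofib}), applicable because $N_1$ is flat as a retract of the flat object $N$, so no induction on the length of $A$ is needed.
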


\begin{corollary}[cf. {\cite[Cor. 6.16]{MM}\,}]\label{corollary.CW}
Let $A\in \DGArt_{\K}^{\le 0}$ and consider a flat diagram $P\in\Fun(\sC,\CDGA_A^{\le 0})$.
For every trivial cofibration $\bar{f}\colon \bar{P}=P\otimes_A\K\to \bar{Q}$ in $\CDGA_{\K}^{\le 0}$ there exist a flat diagram $Q\in\Fun(\sC,\CDGA_A^{\le 0})$ such that $Q\otimes_A\K=\bar{Q}$ and a trivial cofibration $f\colon P\to Q$ lifting $\bar{f}$.
\end{corollary}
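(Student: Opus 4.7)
The plan is to apply Corollary~\ref{cor.liftingfactorization2} to a carefully chosen morphism in $\Fun(\sC,\CDGA_A^{\le 0})$ whose reduction admits $\bar f$ as the trivial-cofibration component of a $(CW,F)$-factorisation. A natural first attempt would be to lift $\id_P$ along a factorisation $\bar P\xrightarrow{\bar f}\bar Q\to \bar P$ of $\id_{\bar P}$, but this would require a fibration retraction of $\bar f$, which is not available for free. To sidestep this I propose to take as target the terminal diagram $*\in\Fun(\sC,\CDGA_A^{\le 0})$, namely the constant diagram at the zero $A$-algebra; the fibration component of the factorisation will then come automatically from Reedy fibrancy.

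First I will verify that the diagram $*$ is flat. By Lemma~\ref{lem.flatforcones} this amounts to flatness of the morphism $A\to 0$ in $\CDGA_A^{\le 0}$, which is immediate: the undercategory $\CDGA_0^{\le 0}$ consists of a single object and the pushout functor $-\otimes_A 0$ is essentially constant, so it trivially preserves pullback squares of trivial fibrations. Next I will consider the unique morphism $g\colon P\to *$; its reduction $\bar g\colon\bar P\to *$ admits the $(CW,F)$-factorisation
\[
\bar P\xrightarrow{\bar f}\bar Q\longrightarrow *,
\]
the second arrow being a fibration because every diagram in $\Fun(\sC,\CDGA_\K^{\le 0})$ is Reedy fibrant. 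Corollary~\ref{cor.liftingfactorization2} applied with $B=\K$ then lifts this to a $(CW,F)$-factorisation $P\xrightarrow{f}Q\to *$ of $g$ in $\Fun(\sC,\CDGA_A^{\le 0})$, producing a trivial cofibration $f$ satisfying $f\otimes_A\K=\bar f$ and $Q\otimes_A\K=\bar Q$.

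It remains to check that $Q$ is flat. By Proposition~\ref{prop.flatnessReedy} the Reedy model structure on $\Fun(\sC,\CDGA_A^{\le 0})$ is strong left proper, so the cofibration $f\colon P\to Q$ is flat; composing with the flat morphism $\Delta A\to P$ yields that $\Delta A\to Q$ is flat, i.e., $Q$ is a flat diagram. I expect the only non-routine step to be the choice of auxiliary target: replacing the naive target $P$ by the terminal diagram converts the missing fibration retraction of $\bar f$ into the automatic fibration $\bar Q\to *$, making Corollary~\ref{cor.liftingfactorization2} directly applicable.
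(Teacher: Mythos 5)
Your proof is correct and follows essentially the route the paper intends: the paper omits the argument, deferring to \cite[Cor.~6.16]{MM}, whose proof is precisely the application of the lifting of $(CW,F)$-factorisations (here Corollary~\ref{cor.liftingfactorization2}) to the factorisation $\bar{P}\xrightarrow{\bar f}\bar{Q}\to 0$ of the map to the terminal (flat, automatically fibrant) diagram. Your supplementary checks --- flatness of the zero algebra and flatness of $Q$ via closure of flat maps under composition with the Reedy cofibration $f$ --- are exactly the details needed and are correct.
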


\begin{corollary}[cf. {\cite[Cor. 6.17]{MM}\,}]\label{corollary.CWback}
Let $A\in \DGArt_{\K}^{\le 0}$ and consider a Reedy cofibrant diagram  $Q\in\Fun(\sC,\CDGA_A^{\le 0})$.
For every trivial cofibration $\bar{f}\colon \bar{P}\to \bar{Q}=Q\otimes_A\K$ in 
$\Fun(\sC,\CDGA_A^{\le 0})$ there exist a flat diagram $P\in\Fun(\sC,\CDGA_A^{\le 0})$ such that $P\otimes_A\K=\bar{P}$ and a lifting of $\bar{f}$ to a trivial cofibration $f\colon P\to Q$.
\end{corollary}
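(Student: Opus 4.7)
The plan is to exhibit $\bar{f}$ as part of the splitting of a trivial idempotent of $\bar{Q}$, lift that idempotent from $\bar{Q}$ to $Q$ via Theorem \ref{thm.lifttrivialidemp}, and then take $P$ to be the splitting of the lifted idempotent in $\Fun(\sC,\CDGA_{A}^{\le 0})$.

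First, I would observe that every object of $\CDGA_{\K}^{\le 0}$ is fibrant (the terminal object is the zero algebra and the defining condition of fibration is vacuously satisfied there), so every diagram in $\Fun(\sC,\CDGA_{\K}^{\le 0})$ is Reedy fibrant. In particular $\bar{P}$ is Reedy fibrant, which forces the trivial cofibration $\bar{f}\colon\bar{P}\to\bar{Q}$ to admit a retraction $\bar{r}\colon\bar{Q}\to\bar{P}$ with $\bar{r}\bar{f}=\id$. The composition $\bar{e}:=\bar{f}\bar{r}\colon\bar{Q}\to\bar{Q}$ is then automatically a trivial idempotent: $\bar{e}^2=\bar{f}(\bar{r}\bar{f})\bar{r}=\bar{e}$, and 2-out-of-3 applied to $\bar{r}\bar{f}=\id$ forces $\bar{r}$, and hence $\bar{e}$, to be weak equivalences.

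Next, since $Q$ is Reedy cofibrant, the structural morphism $i\colon\Delta A\to Q$ in $\Fun(\sC,\CDGA_{A}^{\le 0})$ is a Reedy cofibration of flat diagrams, and its reduction $i\otimes_A\K$ is the unit map $\Delta\K\to\bar{Q}$, which is preserved by $\bar{e}$ since both $\bar{f}$ and $\bar{r}$ are unital algebra maps. Theorem \ref{thm.lifttrivialidemp} (applied with $B=\K$) then produces a trivial idempotent $e\colon Q\to Q$ lifting $\bar{e}$ and fixing $i$. I would take $P$ to be the equalizer of $e$ and $\id_Q$, computed objectwise; the idempotency of $e$ furnishes a canonical splitting $Q\xrightarrow{r}P\xrightarrow{f}Q$ with $fr=e$ and $rf=\id_P$, where $f$ is the inclusion of the fixed-point subdiagram.

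To finish, I would identify $P\otimes_A\K$ with $\bar{P}$: for each $a\in\sC$ the $A$-linear idempotent $e_a$ on the flat $A$-module $Q_a$ produces a direct-sum decomposition $Q_a=P_a\oplus\ker(e_a)$, which under $-\otimes_A\K$ reduces to the analogous splitting $\bar{Q}_a=\bar{P}_a\oplus\ker(\bar{e}_a)$ induced by $\bar{e}$; matching summands yields $P\otimes_A\K\cong\bar{P}$, with $f\otimes_A\K$ identified with $\bar{f}$. Each $P_a$ is then a direct summand of the flat module $Q_a$, hence flat, so $P$ is a flat diagram by Lemma \ref{lem.flatforcones}. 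Corollary \ref{cor.flattrivialcofib} promotes $f$ to a Reedy cofibration (its reduction $\bar{f}$ is one), and 2-out-of-3 applied to $rf=\id_P$ with $e=fr$ a weak equivalence shows $f$ is a weak equivalence. The delicate point I expect to be the main obstacle is precisely this last identification: verifying that the fixed-point subdiagram is compatible with the base change $-\otimes_A\K$ rests crucially on the flatness of $Q$ over $A$, which guarantees that $-\otimes_A\K$ preserves the kernels arising from the idempotent decomposition.
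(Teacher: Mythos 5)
Your proof is correct and follows exactly the route the paper intends: the paper defers to \cite[Cor.~6.17]{MM}, whose argument is precisely this splitting of the trivial cofibration via a retraction $\bar{r}$ (using that every object is fibrant), lifting of the trivial idempotent $\bar{f}\bar{r}$ through Theorem~\ref{thm.lifttrivialidemp} applied to $\Delta A\to Q$, and recovery of $P$ as the image of the lifted idempotent, with Corollary~\ref{cor.flattrivialcofib} supplying the cofibration property. The only point to phrase more carefully is the flatness of $P$: it should be deduced from the fact that $A\to P_a$ is a retract of the flat morphism $A\to Q_a$ and that flat morphisms (in the model-categorical sense of Definition~\ref{def.slp}) are closed under retracts, rather than from module-theoretic direct summands; likewise the base-change compatibility of the fixed-point subdiagram needs no flatness, since idempotent splittings are preserved by any additive functor.
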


\bigskip
\section{The DG-Lie algebra controlling deformations of diagrams of DG-algebras over direct Reedy categories.} 
\label{sec.defodiagramalgebra}

Let $\sC$ be a fixed direct Reedy category. We have already pointed out at the end of Section~\ref{sec.diagramoverReedy} that the general  deformation theory of morphisms in strong left proper model categories applies 
to any diagram  $X\in \Fun(\sC,\CDGA^{\le0}_{\K})$ and to every diagonal thickening of Artin type
$\Delta A\to \Delta \K$, $A\in \DGArt_{\K}^{\le 0}$. In this case, for every $A\in \DGArt_{\K}^{\le 0}$  the \emph{trivial} deformation is defined as a deformation equivalent to the push-out along 
$\Delta\K\to \Delta A$: in other words,  the trivial deformation of $X$ along $A$ is represented by the diagram of differential graded algebras $a\mapsto X_a\otimes_{\K}A$.

For simplicity of notation we shall talk of deformations of $X$ over $A$ intending deformations over $\Delta A\to \Delta \K$.

It is useful to introduce the notion of strict deformation: a strict deformation of a diagram 
$X\in \Fun(\sC,\CDGA^{\le0}_{\K})$ over $A\in \DGArt_{\K}^{\le 0}$ is the data of a flat diagram 
$X_A\in \Fun(\sC,\CDGA^{\le0}_{A})$ together an isomorphism of diagrams $X_A\otimes_A\K\to X$. The 
functor $D_X$ of strict deformations of $X$ is defined by  
\[ \D_X(A)=\frac{\text{strict deformations of $X$ over $A$}}{\text{isomorphisms}}\]
and it is immediate from definitions and Nakayama's lemma that 
whenever $X$ and $A$ are concentrated in degree 0, then $D_X(A)$ are precisely the ``classical'' deformations of 
$X$ over $A$. Notice that the full subcategory of objects in $\DGArt_{\K}^{\le 0}$ concentrated in degree $0$ is exactly the usual category $\Art_{\K}$ of local Artin $\K$-algebras with residue field $\K$.

\begin{example}[Deformations of idempotent morphisms] The following trick transform the problem of deformation of 
an object equipped with an idempotent endomorphism, into the deformation problem of a diagram over a direct Reedy category. 

Denote by $\sC$ the full subcategory of $\overrightarrow{\Delta}$ having as objects the 3 finite ordinals $[0],[1], [2]$. We can visualise $\sC$ as a quiver with relations:
\begin{equation} \xymatrix{[0]\ar@(ur,ul)[rr]^{\delta_0}\ar@(dr,dl)[rr]_{\delta_1}&&[1]
\ar@(ur,ul)[rr]^{\delta_0}\ar@(dr,dl)[rr]_{\delta_2}\ar[rr]^{\delta_1}&&[2]},
\qquad \begin{matrix}\;\;\delta_0^2=\delta_1\delta_0,\\
\delta_0\delta_1=\delta_2\delta_0,\\
\;\;\delta_1^2=\delta_2\delta_1.\end{matrix}
\end{equation}
It is immediate to see that  
for any category $\bM$,  every diagram $F\colon \sC\to \bM$ such that $F(\delta_i)$ is an isomorphism
whenever $i>0$, is isomorphic to a diagram of the form:
\begin{equation}\label{equ.diagramamidempotent} 
\xymatrix{R\ar@(ur,ul)[rr]^{e}\ar@(dr,dl)[rr]_{\id}&&R
\ar@(ur,ul)[rr]^{e}\ar@(dr,dl)[rr]_{\id}\ar[rr]^{\id}&&R},
\qquad R\in \bM,\quad e^2=e\,.\end{equation}
If $\bM$ is the category of (non-graded) commutative $\K$-algebras, since isomorphisms are preserved under strict deformations, there exists a natural bijection between strict deformations of the diagram \eqref{equ.diagramamidempotent} and deformations of the pair $(R,e)$.
\end{example}

As pointed out in \cite{MM}, the functor of strict deformations is not homotopy invariant and then it is not the right object to consider: however it is very useful in order to relate the functor $\Def_X$ with classical deformations and with solutions of Maurer-Cartan equations.

\begin{lemma}\label{lem.D=Def_classic} 
In the above set-up, if $X$ is a diagram of algebras concentrated in degree 0 and 
$A\in \Art_{\K}$, then the natural map $\D_X(A)\to \Def_X(A)$ is bijective.
\end{lemma}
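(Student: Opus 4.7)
The natural map $D_X(A)\to \Def_X(A)$ is evidently well defined, since an isomorphism is in particular a weak equivalence and since a flat morphism suffices in place of a cofibration in Definition~\ref{def.deformationXL}. The plan is to prove bijectivity by exhibiting, in both directions, a preferred representative concentrated in degree $0$, obtained by applying $H^0$ to a cofibrant replacement.

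For surjectivity, start from a deformation $(X_A,\phi)$ with $\phi\colon X_A\otimes_A\K\xrightarrow{\sim}X$. Using Theorem~\ref{thm.homotopyequivalence} and a Reedy cofibrant replacement of $X_A$ in the undercategory $\Delta A\downarrow \Fun(\sC,\CDGA_A^{\le 0})$, I would reduce to the case where each component $X_{A,a}$ is a semifree DG-algebra over $A$. Since $X$ is concentrated in degree $0$ and $\phi$ is a weak equivalence, one has $H^{<0}(X_A\otimes_A\K)=0$; a Nakayama-type argument, based on the $\mathfrak m_A$-adic filtration and on the $A$-flatness in each degree granted by semifreeness, then yields $H^{<0}(X_A)=0$. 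Hence the canonical DGA-projection $X_A\to Y_A:=H^0(X_A)$ (killing negative degrees modulo the differential) is a quasi-isomorphism, and $Y_A$ is a diagram of ordinary $A$-algebras. Because $H^0$ commutes with $-\otimes_A\K$ for complexes in non-positive degrees, $Y_A\otimes_A\K=H^0(X_A\otimes_A\K)=X$ on the nose, and the flatness of $A\to Y_A$ follows from the vanishing of higher cohomology together with the semifreeness of $X_A$. Thus $(Y_A,\mathrm{id})$ is a strict deformation and the projection $X_A\to Y_A$ fits into a direct equivalence of deformations.

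For injectivity, let $(X_A,\phi)$ and $(Y_A,\psi)$ be strict deformations inducing the same class in $\Def_X(A)$. By Proposition~\ref{prop.pushoutcfdefo}, after replacing both by cofibrant representatives compatible with their reductions (using Corollaries~\ref{cor.flattrivialcofib} and~\ref{corollary.CWback} so as not to alter the $\otimes_A\K$-reduction), the equivalence is witnessed by a finite zig-zag of trivial cofibrations between cofibrant deformations, each compatible with the maps to $X$. Applying $H^0$ to this zig-zag — a functorial operation in $\CDGA^{\le 0}_A$ — produces a zig-zag of morphisms between classical flat deformations of $X$ over $A$ reducing to the identity on $X$; a standard Nakayama argument shows each such morphism is an isomorphism, and composing yields the required isomorphism $(X_A,\phi)\cong(Y_A,\psi)$ in $D_X(A)$.

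The main obstacle is the Nakayama step in surjectivity — deducing $H^{<0}(X_A)=0$ from $H^{<0}(X_A\otimes_A\K)=0$ and checking that $H^0(X_A)$ is genuinely $A$-flat — which hinges on having cofibrant, and hence componentwise semifree, representatives. It is precisely here that the strong left properness of $\Fun(\sC,\CDGA^{\le 0}_\K)$ established in Proposition~\ref{prop.flatnessReedy}, together with the lifting corollaries of Section~\ref{section.trivialidempotents}, makes it possible to transfer the classical Artinian flat deformation arguments into the derived Reedy setting.
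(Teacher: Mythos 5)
Your argument is correct and follows essentially the same route as the paper: both proofs reduce everything to $H^0$ and invoke Nakayama's lemma together with the classical (relation-lifting) criterion of flatness over an Artin ring. The paper is just more economical — for injectivity it observes that strict deformations of a degree-$0$ diagram over $A\in\Art_{\K}$ are themselves automatically concentrated in degree $0$ by Nakayama, so that weak equivalence already forces isomorphism, and for surjectivity it applies the flatness criterion directly to the given flat diagram $\sX$ to conclude that $\sX_a\to H^0(\sX_a)$ is a quasi-isomorphism with $H^0(\sX_a)$ flat, without passing through the semifree cofibrant replacement and $\mathfrak m_A$-adic filtration that you use.
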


\begin{proof} By Nakayama's lemma, if $X_A,Y_A$ are two strict deformations of $X$, then 
$\sX,\sY$ are diagram of $A$-algebras concentrated in degree 0. In particular $\sX,\sY$ are weak equivalent in $\Fun(\sC,\CDGA^{\le0}_{A})$ is and only if they are isomorphic; this implies that 
$\D_X(A)\to \Def_X(A)$ is injective.

If $\sX \in \Fun(\sC,\CDGA^{\le0}_{A})$ is a deformation of $X$, then by the standard criterion of 
flatness in terms of relations \cite{Art,Sernesi} we have 
that for every $a\in \sC$ the $A$-algebra $H^0(\sX_a)$ is flat and the projection 
$\sX_a\to H^0(\sX_a)$ is a quasi-isomorphism. Therefore 
$H^0(\sX)$ belongs to $\D_X(A)$ and it is equivalent to $\sX$; 
this implies that 
$\D_X(A)\to \Def_X(A)$ is surjective.  
\end{proof}

\begin{lemma}\label{lem.strict=large_cofib} 
In the above set-up, if $X\in \Fun(\sC,\CDGA^{\le0}_{\K})$ is Reedy cofibrant and 
$A\in \DGArt_{\K}^{\le 0}$, then the natural map $\D_X(A)\to \Def_X(A)$ is bijective.
\end{lemma}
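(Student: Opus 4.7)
The plan is to prove that the forgetful map $\D_X(A)\to \Def_X(A)$ is both surjective and injective; in both directions the main tool is Theorem~\ref{thm.lifttrivialidemp} (lifting of trivial idempotents), combined with the following module-theoretic incarnation of Nakayama's lemma for the Artinian local ring $A$: any $A$-linear map $\phi\colon M\to N$ between flat $A$-modules whose reduction $\phi\otimes_A\K$ is an isomorphism is itself an isomorphism. Indeed, since $\mA$ is nilpotent, $\coker(\phi)=\mA\coker(\phi)$ vanishes; then $\ker(\phi)$ is flat as the kernel of a surjection between flat modules and vanishes by the same argument. Applied in each degree and componentwise over $\sC$, this principle promotes any DGA weak equivalence between flat $A$-diagrams to an honest isomorphism as soon as its reduction is already an isomorphism of underlying $A$-modules.

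For surjectivity I start from a deformation represented by a flat $X_A$ with structural weak equivalence $\alpha\colon X_A\otimes_A\K\to X$. By Proposition~\ref{prop.pushoutcfdefo} I may assume $X_A$ is Reedy cofibrant. Factoring $\alpha$ as a trivial cofibration followed by a trivial fibration and using Corollary~\ref{corollary.CW} to lift the trivial-cofibration part to $X_A$, I reduce to the case in which $\alpha$ itself is a trivial fibration. Reedy cofibrancy of $X$ then provides a section $s\colon X\to X_A\otimes_A\K$ of $\alpha$, and $s\alpha$ is a trivial idempotent on $X_A\otimes_A\K$ that tautologically commutes with the structural map $\Delta\K\to X_A\otimes_A\K$. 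By Theorem~\ref{thm.lifttrivialidemp} (applied with cofibration $\Delta A\to X_A$) it lifts to a trivial idempotent $e_A\colon X_A\to X_A$. I then define $Y_A:=\{y\in X_A\mid e_A(y)=y\}$ pointwise over $\sC$: as the image of an $A$-linear idempotent endomorphism, $Y_A$ is a direct summand of $X_A$ as an $A$-module and hence flat; its reduction equals the fixed submodule $s(X)\cong X$ of $s\alpha$, so $Y_A$ is a strict deformation; and the inclusion $Y_A\hookrightarrow X_A$ is a weak equivalence realising the equality $[Y_A]=[X_A]$ in $\Def_X(A)$.

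For injectivity, given strict deformations $X_A,Y_A$ equivalent in $\Def_X(A)$, I take cofibrant replacements $\tilde X_A\to X_A$, $\tilde Y_A\to Y_A$ (trivial fibrations from cofibrant, hence flat, diagrams) and apply Proposition~\ref{prop.pushoutcfdefo} to obtain a cofibrant $Z_A$ with trivial cofibrations $\tilde X_A\to Z_A\leftarrow \tilde Y_A$ compatible with the structural weak equivalences to $X$. Running the surjectivity construction on $\tilde X_A$ alone produces a strict sub-deformation $W_A^X\subseteq\tilde X_A$, and the composition $W_A^X\hookrightarrow\tilde X_A\to X_A$ is a DGA weak equivalence between strict deformations whose reduction modulo $\mA$ is, under the identifications provided by the strict structures, the identity on $X$; by the Nakayama principle above it is therefore a DGA isomorphism, yielding $W_A^X\cong X_A$ and symmetrically $W_A^Y\cong Y_A$. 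To identify $W_A^X$ with $W_A^Y$ I run the same construction on $Z_A$, choosing the section of $Z_A\otimes_A\K\to X$ so as to factor through the trivial cofibration $\tilde X_A\otimes_A\K\to Z_A\otimes_A\K$: the resulting strict sub-deformation $W_A\subseteq Z_A$ contains a canonical isomorphic copy of $W_A^X$, and a symmetric step produces one of $W_A^Y$, yielding $X_A\cong W_A^X\cong W_A\cong W_A^Y\cong Y_A$. The main technical obstacle is arranging the section and retraction choices so that the lifted idempotent on $Z_A$ is simultaneously compatible with both cofibrant replacements $\tilde X_A\to Z_A$ and $\tilde Y_A\to Z_A$; this is handled by iterated applications of Theorem~\ref{thm.lifttrivialidemp} together with Corollaries~\ref{corollary.CW} and~\ref{corollary.CWback}, following closely the affine analogue treated in~\cite{MM}.
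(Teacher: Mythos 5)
Your surjectivity argument is sound and is essentially an unfolding of the paper's: where the paper invokes Ken Brown's factorisation together with Corollaries~\ref{corollary.CW} and~\ref{corollary.CWback}, you apply Theorem~\ref{thm.lifttrivialidemp} directly to the idempotent $s\alpha$ and split off the fixed subalgebra $Y_A=e_A(X_A)$, which is a flat direct summand reducing to $s(X)\cong X$. This works (the fixed points of a DG-algebra idempotent form a sub-DG-algebra, and acyclicity of $\ker e_A$ makes the inclusion a weak equivalence), and it buys a very concrete model of the strict representative; the paper's route through the two corollaries is shorter because those corollaries already package the idempotent argument.

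The injectivity half, however, has a genuine gap exactly where you flag it. Your strategy requires a single trivial idempotent on $Z_A$ (or on its reduction) that is simultaneously compatible with the two cofibrations $\tilde X_A\to Z_A$ and $\tilde Y_A\to Z_A$, so that its fixed subalgebra receives copies of both $W_A^X$ and $W_A^Y$. Theorem~\ref{thm.lifttrivialidemp} only lifts an idempotent commuting with \emph{one} prescribed cofibration $i$, producing $e_Ai=i$; there is no statement in the paper (nor an obvious iteration of the theorem) that produces an idempotent satisfying this condition for two distinct cofibrations into the same object, and the inclusions $W_A^X\hookrightarrow \tilde X_A$ are direct-summand inclusions that are not known to be cofibrations, so the theorem cannot even be applied to them directly. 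Deferring this to ``iterated applications ... following the affine analogue'' leaves the decisive step unproved. The paper avoids the issue entirely: given the comparison diagram of Proposition~\ref{prop.pushoutcfdefo} with trivial cofibrations $\iota\colon X_A\to Z_A$ and $\sigma\colon Y_A\to Z_A$, it uses that $\psi\colon Y_A\to X$ is objectwise surjective, hence a fibration, to lift the identity of $Y_A$ along $\sigma$ and obtain a retraction $\pi\colon Z_A\to Y_A$ over $X$; the composite $\pi\iota\colon X_A\to Y_A$ then reduces to an isomorphism modulo $\mathfrak{m}_A$ and is an isomorphism because $A\to\K$ is a thickening and both objects are flat (your ``Nakayama principle'' is the right heuristic here, but the clean justification is Definition~\ref{def.M(K)} applied to $A\to\K$, since flatness of $A\to X_A$ is the model-categorical notion rather than literal module flatness). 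You should replace your injectivity argument with this lifting-and-thickening argument, which also automatically yields compatibility of the resulting isomorphism with the strict deformation structure maps $\phi$ and $\psi$ --- a point your construction would still need to verify even if the idempotent compatibility could be arranged.
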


\begin{proof} 
We first note that if  $A\to \sX\xrightarrow{\phi} X$ is a strict deformation of $X$, then $\K\to \sX\otimes_A\K$ is a Reedy cofibration and  then, by Corollary~\ref{cor.flattrivialcofib} also $A\to \sX$ is a 
Reedy cofibration.

\emph{Injectivity.} Consider two strict deformations $A\to X_A\xrightarrow{\phi} X$ and 
$A\to Y_A\xrightarrow{\psi} X$ that are mapped in the same 
element of $\Def_X$. Notice that $\phi,\psi$ are objectwise surjective and hence fibrations.
By Proposition~\ref{prop.pushoutcfdefo} there exists a deformation $A\to Z_A\to X$ in $\Def_X(A)$ together with a commutative diagram
\[ \xymatrix{	 & A \ar@{->}[d] \ar@/_1pc/@{->}[dl] \ar@/^1pc/@{->}[dr] & \\
X_A \ar@{->}[r]^{\iota} \ar@/_1pc/@{->}[dr]_{\phi} & Z_A \ar@{->}[d]^{\eta} & Y_A \ar@{->}[l]_{\sigma} \ar@/^1pc/@{->}[dl]^{\psi} \\
 & X & 	} \]
such that $\sigma,\iota$ are Reedy trivial cofibrations.
In order to prove that  $A\to X_A\to X$ is isomorphic to $A\to Y_A\to X$,  we use the fact that, since $\sigma$ is a trivial cofibration,  the diagram of solid arrows
\[ \xymatrix{	Y_A \ar@{->}[r]^{id} \ar@{->}[d]_-{\sigma} & Y_A \ar@{->}[d]^{\psi} \\
Z_A \ar@{->}[r]^{\eta} \ar@{-->}[ur]^{\pi} & X	} \]
admits a  lifting $\pi\colon Z_A\to Y_A$. Therefore, the diagram
\[ \xymatrix{	 & A \ar@{->}[dl] \ar@{->}[dr] & \\
X_A \ar@{->}[rr]^{\pi\circ\iota} \ar@{->}[dr]_{\phi} & & Y_A \ar@{->}[dl]^{\psi} \\
 & X & 	} \]
commutes, and the reduction $\overline{\pi\iota}\colon X_A\otimes_A\K \to Y_A\otimes_A\K$ is an isomorphism. To conclude observe that $A\to \K$ is a thickening and then 
$\pi\circ\iota$ is an isomorphism too.\\

\emph{Surjectivity.} By Proposition~\ref{prop.pushoutcfdefo} it is sufficient to prove that every deformation
\[ A\xrightarrow{i} X_A\xrightarrow{\pi} X, \]
with $i$ a Reedy cofibration 
is equivalent to a strict deformation. Thus 
$X_A\otimes_A\K\xrightarrow{\pi} X$ is a weak equivalence of Reedy cofibrant diagrams and then, by the standard argument used in Ken Brown's lemma there exists a commutative diagram 
\[ \xymatrix{X_A\otimes_A\K\ar[r]^-{f}\ar[rd]_\pi&Y\ar[d]^\sigma&X\ar[l]_-{g}\ar[ld]^{\id}\\
&X&}\] 
with $f,g$ trivial cofibrations and $\sigma$ trivial fibration.
By Corollary~\ref{corollary.CW} there exists a trivial cofibration $X_A\to Y_A$ lifting 
$X_A\otimes_A\K \xrightarrow{f} Y$. By Corollary~\ref{corollary.CWback} there exists 
a flat diagram $Z_A\in \Fun(\sC,\CDGA^{\le0}_{A})$ and
a trivial cofibration $Z_A\to Y_A$ lifting 
$X \xrightarrow{g} Y$. Therefore $A\to Z_A\to X$ is a strict deformation equivalent to 
$A\xrightarrow{i} X_A\xrightarrow{\pi} X$.
\end{proof}

\begin{lemma}\label{lem.strictalgebraicallytrivial} 
In the above situation, let $A\in \DGArt_{\K}^{\le 0}$ and
$\Delta A\xrightarrow{i}\sX$ a Reedy cofibration in $\Fun(\sC,\CDGA^{\le0}_{\K})$. Denoting by $X=\sX\otimes_A\K=\sX\otimes_{\Delta A}\Delta\K$, we have a commutative diagram of diagrams
\[ \xymatrix{\Delta A\ar[d]_j\ar[r]^i&\sX\ar[d]\\
X\otimes_{\Delta\K}\Delta A\ar[ur]^-{g}\ar[r]&X}\]
where $j$ is the natural push-out map and $g$ is an isomorphism of diagrams of graded algebras. 
\end{lemma}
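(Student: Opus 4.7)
The plan is to construct $g$ by induction on the Reedy degree of $\sC$, producing at each object $a$ a graded algebra isomorphism $g_a\colon X_a \otimes_\K A \to \sX_a$ compatible with the diagram morphisms. Before starting the induction, I would invoke Corollary~\ref{cor.flattrivialcofib} to deduce that the reduction $\Delta\K \to X$ is itself a Reedy cofibration, so that $\Delta A \to Y := X\otimes_{\Delta\K}\Delta A$ is Reedy cofibrant as a push-out; thus both $\sX$ and $Y$ appear as Reedy cofibrant objects under $\Delta A$ with common reduction $X$.

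By the universal property of the push-out $Y = X \amalg_{\Delta\K}\Delta A$, producing a morphism $g\colon Y \to \sX$ of diagrams of graded $A$-algebras commuting with the maps from $\Delta A$ is equivalent to producing a morphism $\sigma\colon X \to \sX$ of diagrams of graded $\K$-algebras that is a section of the reduction $\sX \to X$. I would build $\sigma$ inductively: suppose compatible sections $\sigma_b\colon X_b \to \sX_b$ have been constructed for all $b\in\sC$ of Reedy degree strictly less than that of $a$; they assemble into a section at the latching object
\[ L_a\sigma \colon L_aX \to L_a\sX. \]
The Reedy cofibration condition at $a$ exhibits $\sX_a$ as a cofibration from $A\amalg_{L_a\Delta A} L_a\sX$ in $\CDGA^{\le 0}_\K$, which reduces modulo $\mA$ to the cofibration $\K\amalg_{L_a\Delta\K} L_aX \to X_a$. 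Extending $L_a\sigma$ to a graded algebra section $\sigma_a\colon X_a \to \sX_a$ thus amounts to lifting a section through a cofibration whose reduction is controlled. This is achieved via the structure theorem for cofibrations in $\CDGA^{\le 0}_\K$ as retracts of semifree extensions: on semifree generators the section is tautological (just lift the free generators), while the retract data is transported using the idempotent-lifting machinery of Section~\ref{section.trivialidempotents}, in particular Theorem~\ref{thm.lifttrivialidemp} and Lemma~\ref{lem.latchingtrivialidempotent}.

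Having produced $g$, to verify that it is an isomorphism one works objectwise. At each $a$, both $Y_a = X_a\otimes_\K A$ and $\sX_a$ are flat graded $A$-modules (flatness of cofibrations from $A$ in $\CDGA^{\le 0}_\K$, as recorded in Proposition~\ref{prop.flatnessReedy}), and by construction $g_a$ reduces modulo $\mA$ to the identity of $X_a$. The standard graded version of Nakayama's lemma for Artin local rings then forces $g_a$ to be an isomorphism.

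The principal obstacle is the inductive extension of $\sigma_a$: lifting a graded algebra section through the cofibration $A\amalg_{L_a\Delta A} L_a\sX \to \sX_a$ compatibly with the Reedy diagram requires a careful treatment of the retract data involved, and this is exactly the problem that the idempotent-lifting results of Section~\ref{section.trivialidempotents} were designed to solve. Once the lifting is performed consistently at each Reedy degree, the compatibility conditions with the morphisms of $\sC$ propagate automatically through the latching object formalism.
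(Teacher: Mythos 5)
Your strategy is genuinely different from the paper's and, in outline, viable. You reduce the statement to producing a morphism $\sigma\colon X\to\sX$ of diagrams of \emph{graded} algebras splitting the reduction $\sX\to X$, and build $\sigma$ by induction on Reedy degree through the latching objects. The paper avoids all induction: it tensors with the acyclic algebra $A[d^{-1}]$ (with $\deg(d^{-1})=-1$ and $d(d^{-1})=1$), which turns the objectwise surjection $\sX\to X$ into a trivial fibration $\sX\otimes_AA[d^{-1}]\to X\otimes_AA[d^{-1}]$ of Reedy diagrams; the lifting axiom against the Reedy cofibration $j$ then produces a lift $\tilde g$ in one stroke, and composing with the graded (non-DG) retraction $\beta\colon A[d^{-1}]\to A$ gives $g$. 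Both arguments conclude the same way, by forgetting differentials and using that $A\to\K$ remains a thickening, so a morphism of flat objects that becomes an isomorphism mod $\mA$ is an isomorphism. The paper's route buys brevity and stays inside the model-category formalism; yours is more explicit and makes visible exactly why the splitting exists only at the level of graded algebras.

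The one step you must repair is the inductive extension of the section, where you appeal to the wrong tool. Theorem~\ref{thm.lifttrivialidemp} and Lemma~\ref{lem.latchingtrivialidempotent} concern \emph{trivial} idempotents, i.e.\ idempotent weak equivalences, and their lifting along a surjection of Artin rings; the idempotent arising from presenting the latching cofibration $C:=\K\amalg_{L_a\Delta\K}L_aX\to X_a$ as a retract of a semifree extension is in general not a weak equivalence, so that machinery does not apply here. Fortunately it is also not needed. Write $X_a$ as a retract, under $C$, of a semifree extension $C\to P$, with retraction $r\colon P\to X_a$ and section $s\colon X_a\to P$. Since $\pi_a\colon\sX_a\to X_a$ is surjective in every degree and no compatibility with differentials is required, you may send each semifree generator $x_i$ of $P$ to an arbitrary preimage of $r(x_i)$; together with the inductively given map $C\to\sX_a$ this defines a graded algebra map $\tilde\sigma\colon P\to\sX_a$ under $C$ with $\pi_a\tilde\sigma=r$, and $\sigma_a:=\tilde\sigma s$ is the desired graded section with $\pi_a\sigma_a=rs=\id$, compatible with the latching maps because everything is under $C$. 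With that substitution your proof closes; the final Nakayama/thickening step is correct as you state it.
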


\begin{proof}
Consider the polynomial algebra $A[d^{-1}]\in\CDGA_{A}^{\le 0}$, where $d^{-1}$ is a variable of degree $-1$ whose differential is $d(d^{-1})=1$.  Then the natural inclusion $\alpha\colon A\to A[d^{-1}]$ is a morphism of DG-algebras, while  the natural projection $\beta\colon A[d^{-1}]\to A$ is a morphism of graded algebras; moreover $\beta\alpha$ is the identity on $A$. Since 
$\sX\to X$ is pointwise surjective, the induced morphism $\sX\otimes_A A[d^{-1}]\to 
X\otimes_A A[d^{-1}]$ is a trivial fibration,  we have a commutative diagram 
\[ \xymatrix{\Delta A\ar[d]_j\ar[r]^i&\sX\otimes_A A[d^{-1}]\ar[d]\\
X\otimes_{\Delta\K}\Delta A\ar[ur]^-{\tilde{g}}\ar[r]&X\otimes_A A[d^{-1}]}\]
and we can take $g$ as the composition of $\tilde{g}$ and $\Id\otimes\beta$.
In order to prove that $g$ is an isomorphism we can forget the differential everywhere and observe that 
the projection $A\to \K$  remains a thickening. 
\end{proof}

We can rephrase  Lemma~\ref{lem.strictalgebraicallytrivial} by saying that every strict deformation over $A$ of a cofibrant diagram $X$ is obtained by perturbing the differential of the trivial deformation $X\otimes_{\K}A$. Conversely every diagram $\sX$ of $A$-algebras obtained perturbing the differential of    
$X\otimes_{\K}A$ is pointwise flat by \cite[Prop. 7.6]{MM} and then $\sX$ is a strict deformation of $X$; (notice that this last point is false if the algebras are not concentrated in 
non-positive degrees, see \cite[Rem. 7.9]{MM}).\\

Recall that for every $R\in \CDGA^{\le0}_{\K}$, the DG-Lie algebra of derivations of $R$ is 
denoted by 
$\Der^*_{\K}(R,R)=\oplus_{i\in \Z}\Der^i_{\K}(R,R)$, where 
\[ \Der^i_{\K}(R,R)=\{\alpha\in \Hom_{\K}^i(R,R)\mid \alpha(xy)=\alpha(x)y+(-1)^{i\,\bar{x}}x\alpha(y)\}\,,\]
the bracket is the graded commutator and the differential is the adjoint operator of the differential of $R$.

We can extend naturally the above notion to every diagram $R\in \Fun(\sC,\CDGA^{\le0}_{\K})$; for every 
morphism $f\colon a\to b$ in $\sC$ we shall denote by $R_f\colon R_a\to R_b$ the induced morphism of differential graded algebras. 
Then we define 
\begin{equation}\label{equ.derivazionidiagramma}
\Der^*_{\K}(R,R)\subset \prod_{a\in \sC}\Der^*_{\K}(R_a,R_a)
\end{equation}
as the DG-Lie subalgebra of sequences $\{\alpha_a\}_{a\in \sC}$ such that for every morphism 
$f\colon a\to b$ we have $R_f \alpha_b=\alpha_a R_f$. Equivalently, an element of $\Der^*_{\K}(R,R)$ is a morphism of diagrams that is pointwise a derivation.

Any DG-Lie algebra $L$ over the field $\K$ induces
a functor 
\[ \Def_L\colon \DGArt_{\K}^{\le 0}\to \Set\]
defined in the usual way as the quotient of Maurer-Cartan element modulus gauge action \cite{H04,Man2}:
\[ \Def_{L}(A) =\dfrac{\MC_L(A)}{\text{gauge}}= \dfrac{\;\{ \eta\in (L\otimes_{\K}\mathfrak{m}_A)^1 \mid  d\eta + \frac{1}{2}[\eta,\eta] = 0 \}\;}{ \exp\,(L\otimes_{\K}\mathfrak{m}_A)^0}\,. \]

Therefore every diagram $R\in\Fun(\sC,\CDGA^{\le0}_{\K})$  
induces a deformation functor 
\[\Def_{\Der^{\ast}_{\K}(R,R)}\colon \DGArt_{\K}^{\le}\to \Set\,.\] 
In the following result we denote by $\MC_{\Der^{\ast}_{\K}(R,R)}(A)$ the set of \textbf{Maurer-Cartan elements}, i.e.,
\[ \MC_{\Der^{\ast}_{\K}(R,R)}(A) = \left\{ \eta\in (\Der^{\ast}_{\K}(R,R)\otimes_{\K}\mathfrak{m}_A)^1 \, \vert \,  d\eta + \frac{1}{2}[\eta,\eta] = 0 \right\} \, . \]

\begin{theorem}\label{thm.DefCofibrantDiagram}
Let $\sC$ be a direct Reedy category, let $R\in\Fun(\sC,\CDGA^{\le0}_{\K})$ be a Reedy cofibrant diagram and 
denote by $\D_R$ the functor  of strict deformations of $R$. 
Then for every $A\in \DGArt_{\K}^{\le 0}$ there exists a natural bijection
\[ \Def_{\Der^{\ast}_{\K}(R,R)}(A) \to \D_R(A), \]
induced by the map  
\[ \MC_{\Der^{\ast}_{\K}(R,R)}(A)\to \D_R(A),\qquad \xi\mapsto 
(R\otimes_{\K}A, d_R+\xi)\,.\]
\end{theorem}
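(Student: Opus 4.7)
The plan is to exhibit the map $\xi\mapsto (R\otimes_{\K}A,d_R+\xi)$ as a bijection on Maurer--Cartan sets and then to identify the two equivalence relations (gauge action on the source, isomorphism of strict deformations on the target). I will split the verification into three steps.

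First I would check well-definedness. For $\xi\in (\Der^*_\K(R,R)\otimes m_A)^1$, the operator $d_R+\xi$ is an $A$-linear derivation of the diagram $R\otimes_{\K}A$ of graded $A$-algebras, and a direct computation using $d_R^2=0$ and the identity $\xi^2=\tfrac12[\xi,\xi]$ (valid in odd degree) gives
\[
(d_R+\xi)^2 \;=\; [d_R,\xi]+\tfrac12[\xi,\xi] \;=\; d\xi+\tfrac12[\xi,\xi],
\]
which vanishes precisely when $\xi$ satisfies Maurer--Cartan. Pointwise flatness of $(R\otimes_\K A,d_R+\xi)$ is exactly the content of the remark following Lemma~\ref{lem.strictalgebraicallytrivial} (via \cite[Prop.~7.6]{MM}), so the perturbed diagram is indeed a strict deformation of $R$.

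Second I would prove surjectivity of the induced map $\MC_{\Der^*_\K(R,R)}(A)\to \D_R(A)$. Given any strict deformation $\Delta A\xrightarrow{i}\sX$, Lemma~\ref{lem.strictalgebraicallytrivial} furnishes an isomorphism $g\colon R\otimes_{\K}A\xrightarrow{\sim}\sX$ of diagrams of graded $A$-algebras, compatible with the structure maps and reducing to the identity modulo $m_A$. Transporting the differential of $\sX$ through $g$ yields a differential on $R\otimes_\K A$ which, being an $A$-linear derivation of each component of the diagram that agrees with $d_R$ modulo $m_A$, must be of the form $d_R+\xi$ with $\xi\in (\Der^*_\K(R,R)\otimes m_A)^1$; the condition $(d_R+\xi)^2=0$ is the Maurer--Cartan equation.

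Third, and this is where the main work lies, I would match the equivalence relations. Two Maurer--Cartan elements $\xi,\eta$ give isomorphic strict deformations if and only if there is an isomorphism of diagrams of DG $A$-algebras $\Phi\colon (R\otimes_\K A,d_R+\xi)\to(R\otimes_\K A,d_R+\eta)$ reducing to the identity modulo $m_A$. Such a $\Phi$ is a unipotent automorphism of the diagram $R\otimes_\K A$ viewed in graded $A$-algebras, and by nilpotency of $m_A$ together with the natural exponential/logarithm for derivations of diagrams of graded algebras, $\Phi=e^{\phi}$ for a unique $\phi\in(\Der^*_\K(R,R)\otimes m_A)^0$. The identity $\Phi\circ(d_R+\xi)=(d_R+\eta)\circ\Phi$ then translates, through the usual formal computation $e^{\phi}(d_R+\xi)e^{-\phi}=d_R+\eta$, into the standard gauge equivalence formula relating $\eta$ and $\xi$. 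This identification is the crux of the argument: the difficulty is purely in verifying that every such graded-algebra isomorphism of diagrams really does exponentiate from a derivation of the diagram (not just a collection of pointwise derivations), which requires checking compatibility with every structure map $R_f$, an easy consequence of the fact that $\Phi$ is a morphism of diagrams and that $\phi$ is recovered from $\Phi$ by $\log$ componentwise.

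Combining the three steps gives a natural bijection $\Def_{\Der^*_\K(R,R)}(A)\to \D_R(A)$; naturality in $A$ is immediate from the constructions.
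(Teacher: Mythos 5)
Your proposal is correct and follows essentially the same route as the paper: flatness of the perturbed diagram via \cite[Prop.~7.6/7.7]{MM}, surjectivity via Lemma~\ref{lem.strictalgebraicallytrivial}, and identification of gauge equivalence with unipotent diagram isomorphisms through the exponential/logarithm in characteristic zero. If anything, your third step is more explicit than the paper's about why $\log\Phi$ is an honest derivation of the diagram (i.e.\ compatible with all structure maps $R_f$), a point the paper leaves implicit.
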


\begin{proof} We first notice that, according to \cite[Prop. 7.7]{MM}  every diagram of type $(R\otimes_{\K}A, d_R+\xi)$, with 
$\xi \in \MC_{\Der^{\ast}_{\K}(R,R)}(A)$ is flat over $A$ and then it is a strict deformation 
of $R$, while by Lemma~\ref{lem.strictalgebraicallytrivial} every strict deformation is of this type.

The conclusion follows by observing that, the gauge equivalence corresponds to isomorphisms of diagrams of algebras whose reduction to the residue field is the identity.  In fact, given such an isomorphism $\varphi_A\colon R_A\to R_A'$ we can write $\varphi_A = \id + \eta_A$ for some $\eta_A\in(\Hom^*_{\K}(R,R)\otimes_{\K}\mathfrak{m}_A)^0$. Now, since $\K$ has characteristic $0$, we can take the logarithm to obtain $\varphi_A = e^{\theta_A}$ for some $\theta_A\in(\Der^*_{\K}(R,R)\otimes_{\K}\mathfrak{m}_A)^0$.
\end{proof}

\begin{corollary}\label{corollary.deformations}
Let $\sC$ be a direct Reedy category,  $S\in\Fun(\sC,\CDGA^{\le0}_{\K})$ a diagram and 
$R\to S$ a Reedy cofibrant replacement. Then the 
DG-Lie algebra $\Der^*_{\K}(R,R)$ controls the deformation 
functor $\Def_S$. 
\end{corollary}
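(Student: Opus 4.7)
The plan is to express the desired equivalence $\Def_S\simeq \Def_{\Der^{\ast}_{\K}(R,R)}$ as the composition of three natural bijections, all of which are already available from the machinery built up in the previous sections. Concretely, for every $A\in\DGArt_{\K}^{\le 0}$ I would produce a chain
\[ \Def_{\Der^{\ast}_{\K}(R,R)}(A)\;\xrightarrow{\;\sim\;}\;\D_R(A)\;\xrightarrow{\;\sim\;}\;\Def_R(A)\;\xrightarrow{\;\sim\;}\;\Def_S(A), \]
natural in $A$.

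For the rightmost bijection I would invoke homotopy invariance. The Reedy cofibrant replacement $R\to S$ is in particular a weak equivalence, and both $R$ and $S$ are Reedy fibrant because every diagram in $\Fun(\sC,\CDGA^{\le 0}_{\K})$ is Reedy fibrant, as observed at the end of Section~\ref{sec.diagramoverReedy}. Moreover, the augmentation $A\to\K$ is a surjection in $\DGArt_{\K}^{\le 0}$ with nilpotent kernel, hence a thickening in $\CDGA^{\le 0}_{\K}$ by \cite[Prop.~3.5]{MM}, and then $\Delta A\to\Delta\K$ is a thickening in $\Fun(\sC,\CDGA^{\le 0}_{\K})$ by Corollary~\ref{cor.Reedythick}. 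Applying Theorem~\ref{thm.homotopyequivalence} to $\Delta\K\to R\xrightarrow{\sim}S$ then yields the natural bijection $\Def_R(A)\to\Def_S(A)$.

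The middle bijection $\D_R(A)\to\Def_R(A)$ is exactly the content of Lemma~\ref{lem.strict=large_cofib}, which applies because $R$ is Reedy cofibrant by hypothesis. The leftmost bijection is provided by Theorem~\ref{thm.DefCofibrantDiagram}, under which a Maurer--Cartan element $\xi\in \MC_{\Der^{\ast}_{\K}(R,R)}(A)$ is sent to the perturbed diagram $(R\otimes_{\K}A,\,d_R+\xi)$ and gauge equivalence corresponds to isomorphism of strict deformations. Composing the three steps gives the claim.

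The hard part is not in this corollary itself but was done earlier: the lifting of trivial idempotents of Section~\ref{section.trivialidempotents} (which powers Lemma~\ref{lem.strict=large_cofib} via Corollaries~\ref{corollary.CW} and~\ref{corollary.CWback}), the strong left properness of $\Fun(\sC,\CDGA^{\le 0}_{\K})$ from Proposition~\ref{prop.flatnessReedy} (needed to apply Theorem~\ref{thm.homotopyequivalence}), and the perturbation-of-differential description of strict deformations underlying Theorem~\ref{thm.DefCofibrantDiagram}. Once these are in hand, the only things left to verify are naturality of each bijection in $A$ and the fibrancy/thickening hypotheses above, both of which are routine.
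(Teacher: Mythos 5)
Your proposal is correct and follows exactly the same route as the paper's proof, which composes the bijections from Theorem~\ref{thm.homotopyequivalence}, Lemma~\ref{lem.strict=large_cofib}, and Theorem~\ref{thm.DefCofibrantDiagram} in the same order; your additional verification of the fibrancy and thickening hypotheses is accurate but left implicit in the paper.
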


\begin{proof}
By Theorem~\ref{thm.homotopyequivalence} and Lemma~\ref{lem.strict=large_cofib}  we have $\Def_S\simeq \Def_R\simeq \D_R$. The conclusion follows immediately from  Theorem~\ref{thm.DefCofibrantDiagram}.
\end{proof}

\begin{example}[Deformations of algebra morphisms]\label{ex.defoalgebramorphism}
The first application of the above results concerns deformations of a morphism of DG-algebras $f\colon B\to C$. We choose  a cofibrant resolution $p\colon R\to B$, followed by a factorisation of $fp$ as a cofibration $i\colon R\to S$ and a trivial fibration $q\colon S\to C$.
Then $i\colon R\to S$ is a Reedy cofibrant resolution of the diagram $f\colon B\to C$ and the DG-Lie algebra controlling deformations of $f$ is given by 
\[ L=\{ \alpha\in \Der_{\K}^*(S,S)\mid \alpha i(R)\subset i(R)\}\,.\] 
If one is interested to deformations where both $B,C$ remain fixed, i.e., to morphisms of DG-algebras 
of type $B\otimes A\to C\otimes A$ reducing to $f$ modulus $\mathfrak{m}_A$ we need to consider the homotopy fibre of the inclusion of DG-Lie algebras $L\to \Der_{\K}^*(R,R)\times \Der_{\K}^*(S,S)$, cf. \cite{IaconoIMNR,ManettiSemireg}.
\end{example}

\bigskip
\section{The Reedy-Palamodov resolvent and deformations of schemes}
\label{section.resolvent}

Let $X$ be a separated scheme over a field $\K$ of characteristic 0  and let $\{U_i\}_{i\in I}$ an affine open cover of $X$; actually the separatedness assumption  is only needed to ensure that every finite intersection
of elements of $\{U_i\}_{i\in I}$ is an affine open subset, therefore this hypothesis could be relaxed 
by requiring that $X$ is semi-separated and that $\{U_i\}_{i\in I}$ is a semi-separating cover.

The nerve $\sN$ of the covering is a Reedy poset with the cardinality as degree function. Denoting as usual 
by $U_{\{i_1,\ldots,i_k\}}=U_{i_1}\cap\cdots\cap U_{i_k}$, since every $U_{\alpha}$, $\alpha\in \sN$, is an affine open subset, 
the geometry of $X$ is completely determined by the diagram of $\K$-algebras 
\begin{equation}\label{equ.wtildeX} 
S_{\pal}\colon \sN\to \CDGA_{\K}^{\le 0},\qquad \alpha\mapsto 
S_{\alpha}=\Gamma(U_{\alpha},\Oh_X)\,.
\end{equation}

Since the trivial algebra $0$ is the final object in the category $\CDGA_{\K}^{\le 0}$ 
the restriction of  $S_{\pal}$ to the nerve is useful but no strictly necessary: the same works if $S_{\pal}$ is defined on the entire family of finite subsets of $I$, with
$S_{\alpha}=0$ whenever $U_{\alpha}=\emptyset$.

A deformation of $X$ over a local Artin ring $A\in \Art_{\K}$ can be interpreted as the data of a  deformation over $A$ of every open subset $U_i$ together with a deformation of the corresponding descent data. In other words, there exists a natural bijection between isomorphism classes of deformations of the scheme $X$ and isomorphism classes of deformations of the diagram 
\[U\colon \sN^{op}\to \text{affine schemes},\qquad \alpha\mapsto U_{\alpha}\,.\]
Equivalently there exists a natural bijection between isomorphism classes of deformations of $X$ and isomorphism classes of strict deformations of the diagram $S_{\pal}$. 

\begin{definition}\label{def.resolvent}
A Reedy-Palamodov resolvent of $X$, relative to an  affine open cover with nerve $\sN$ is a Reedy cofibrant resolution of the diagram $S_{\pal}$ of \eqref{equ.wtildeX}.
\end{definition}

In particular, the results of previous sections apply to this situation and 
then $\Der_{\K}^*(R,R)$ is the DG-Lie algebra controlling deformations of $X$, 
where 
$R\colon \sN\to \CDGA_{\K}^{\le 0}$ is a Reedy-Palamodov resolvent.

The name Reedy-Palamodov resolvent is clearly motivated by the large amount of common features with the usual resolvent considered in deformation theory of complex analytic spaces. 
In fact, a Reedy cofibrant resolution of $X$ over the nerve $\sN$ is a morphism of diagrams 
$R\to S_{\pal}$  over $\sN$  characterised by the following (redundant) list of properties:
\begin{enumerate}

\item for every $\alpha\in \sN$ we have 
$H^j(R_{\alpha})=0$ for every $j\not=0$ and $H^0(R_{\alpha})\cong \Gamma(U_{\alpha},\Oh_X)$;

\item for every $\alpha\in \sN$ the DG-algebra $R_\alpha\in \CDGA^{\le 0}_{\K}$ is cofibrant, 
and the natural map 
\[ \colim_{\gamma<\alpha}R_{\gamma}\to R_{\alpha}\]
is a cofibration. 

\end{enumerate}  

Replacing in the above characterization cofibrations with semifree extensions and cofibrant algebra with semifree algebra, we recover precisely the algebraic analogue of Palamodov's resolvent 
\cite{Pala1, Pala2}, also called 
 free DG-algebra resolution in \cite{BF,Fl}.  Thus we have proved the following result.

\begin{theorem}\label{thm.palamodov} 
Let $X$ be a separated scheme over a field $\K$ of characteristic 0 and let 
$R\in \Fun(\sN,\CDGA^{\le 0}_{\K})$ be a Reedy-Palamodov resolvent of $X$.  
Then the DG-Lie algebra $\Der^*_{\K}(R,R)$ controls the functor of infinitesimal deformations
of $X$. 
\end{theorem}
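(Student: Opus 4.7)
The plan is to reduce Theorem~\ref{thm.palamodov} to Corollary~\ref{corollary.deformations} by identifying the classical deformation functor of $X$ with the functor $\Def_{S_{\pal}}$ restricted to $\Art_{\K}\subset \DGArt_{\K}^{\le 0}$. Once this identification is in place, the cofibrant resolution $R\to S_\pal$ yields $\Def_{S_\pal}\simeq \Def_R$ by homotopy invariance (Theorem~\ref{thm.homotopyequivalence}), and $\Def_R\simeq \Def_{\Der^*_{\K}(R,R)}$ by Theorem~\ref{thm.DefCofibrantDiagram} together with Lemma~\ref{lem.strict=large_cofib}. The extension of the classical functor to all of $\DGArt_{\K}^{\le 0}$ is then by definition the one induced by $\Der^*_{\K}(R,R)$, which is what the statement asserts.

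First I would fix an affine open cover $\{U_i\}_{i\in I}$ and its nerve $\sN$, and verify that the diagram $S_{\pal}$ of \eqref{equ.wtildeX} is well defined; here the separatedness hypothesis enters to guarantee that each $U_\alpha$ is affine and hence that $S_\alpha=\Gamma(U_\alpha,\Oh_X)$ really is an ordinary commutative $\K$-algebra. Next, for $A\in \Art_{\K}$, I would spell out the bijection
\[
\{\text{isomorphism classes of deformations of $X$ over $A$}\}\;\xrightarrow{\;\simeq\;}\;\D_{S_{\pal}}(A),
\]
sending a deformation $\sX\to \Spec(A)$ to the diagram $\alpha\mapsto \Gamma(\sX_\alpha,\Oh_{\sX})$, where $\sX_\alpha$ is the open subscheme of $\sX$ corresponding to $U_\alpha\subset X$. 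Surjectivity is the standard gluing argument: a strict deformation of $S_{\pal}$ assigns to each $\alpha$ a flat $A$-algebra deforming $S_\alpha$ together with the localisation morphisms, which descends to a scheme over $\Spec A$ because the cover is affine and the gluing data are encoded directly in the functoriality on $\sN$. Injectivity follows from the fact that an isomorphism of deformations can be read off affine-locally on the cover.

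At this point Lemma~\ref{lem.D=Def_classic} gives $\D_{S_{\pal}}(A)\cong \Def_{S_{\pal}}(A)$ for $A\in \Art_{\K}$, so the classical deformation functor of $X$ coincides with the restriction of $\Def_{S_{\pal}}$ to $\Art_{\K}$. Corollary~\ref{corollary.deformations} applied to the Reedy cofibrant replacement $R\to S_\pal$ now yields the natural isomorphism $\Def_{S_\pal}\simeq \Def_{\Der^*_{\K}(R,R)}$ of functors on all of $\DGArt_{\K}^{\le 0}$, proving the theorem.

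The main obstacle, and really the only non-formal step, is the gluing/descent bijection between classical deformations of $X$ and strict deformations of the diagram $S_{\pal}$: the cocycle condition for deformations of a scheme must be shown to be exactly the functoriality datum of a diagram over the nerve, which relies on separatedness (so that every finite intersection of the $U_i$ is affine, hence its deformations are captured by a flat $A$-algebra) and on the fact that for ordinary algebras flatness and strictness coincide with the classical notions. This is essentially well-known, but it is the point where algebraic geometry, rather than abstract model category theory, genuinely enters the argument.
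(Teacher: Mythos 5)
Your proposal is correct and follows essentially the same route as the paper: identify classical deformations of $X$ with strict deformations of the diagram $S_{\pal}$ by descent along the affine cover, pass to $\Def_{S_{\pal}}$ via Lemma~\ref{lem.D=Def_classic}, and conclude by Corollary~\ref{corollary.deformations} (i.e.\ Theorem~\ref{thm.homotopyequivalence}, Lemma~\ref{lem.strict=large_cofib} and Theorem~\ref{thm.DefCofibrantDiagram}). The gluing step you flag as the only non-formal ingredient is exactly the point the paper also treats as standard, so there is no gap to report.
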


Writing down explicitly the resolvent can be 
very hard: in the following two illustrating examples we consider the smooth and the cuspidal rational curves, respectively.

\begin{example}[Resolvent of $\Proj^1$]  
Let $x_0,x_1$ be a set of homogeneous coordinates in $\Proj^1$, then 
a Reedy-Palamodov resolvent over the nerve of the affine cover $\{x_0\not=0\}\cup 
\{x_1\not=0\}$  is given by  
\[\begin{matrix}\xymatrix{&\K[x]\ar[d]\\
\K[y]\ar[r]&\K[x,y,e]}\end{matrix}\quad\xrightarrow{\qquad}\quad
\begin{matrix} \xymatrix{&\K[x]\ar[d]\\
\K[y]\ar[r]&\dfrac{\K[x,y]}{(xy-1)}}\end{matrix}\]
where $x=x_1/x_0$, $y=x_0/x_1$, $\deg(e)=-1$, $de=xy-1$.
\end{example}

\begin{example}[Resolvent of the cuspidal cubic]
Let  $X$ be the cuspidal cubic  in $\mathbb{P}^2$ of equation $x_0^2x_1=x_2^3$, and consider the affine open cover 
\[ X=U_0\cup U_1,\qquad U_0=\{x_0\not=0\},\quad U_1=\{x_1\not=0\}\,.\]
Then $U_{01}=\{x_0x_1\not=0\}=\{x_2\not=0\}$ and 
via the isomorphism $\C\xrightarrow{\;\simeq\;} U_0$, $w\mapsto [w,1,w^3]$, we have:
\[ \begin{split}
\Gamma(U_0,\Oh_X)&=\frac{\K[x,z]}{(z-x^3)}\simeq \K[w],\qquad x=w=\frac{x_2}{x_0},\qquad 
z=w^3=\frac{x_1}{x_0},\\
\Gamma(U_1,\Oh_X)&=\frac{\K[x,z]}{(y^2-x^3)},\qquad x=\frac{x_2}{x_1},\quad 
y=\frac{x_0}{x_1},\\
\Gamma(U_{01},\Oh_X)&=\frac{\K[t,w]}{(tw-1)},
\qquad w=\frac{x_2}{x_0},\quad 
t=\frac{x_0}{x_2}\,.\end{split}\]
The  corresponding diagram over the nerve  is: 
\[ S_{\pal}:\qquad \frac{\K[x,y]}{(y^2-x^3)}\xrightarrow{\;x\mapsto t^2,\; y\mapsto t^3\;} \frac{\K[t,w]}{(tw-1)}
\xleftarrow{\;w\mapsto w\;}\K[w]\;.\]
An easy computation shows that 
a possible Reedy-Palamodov resolvent  $p\colon R\to S_{\pal}$ is:  
\[\xymatrix{\K[x,y,e_1]\ar[d]^{p_1}\ar[r]&
\K[x,y,h,t,w,e_1,e_2,e_3,e_4]\ar[d]^{p_{01}}&\K[w]\ar[l]\ar[d]^{p_0=\id}\\
\dfrac{\K[x,y]}{(y^2-x^3)}\ar[r]^{x\mapsto t^2,\; y\mapsto t^3}&\dfrac{\K[t,w]}{(tw-1)}&\K[w]\ar[l]}\]
where: $x,y,h,t,w$ have  degree 0; $e_1,e_2,e_3,e_4$ have degree $-1$;  
\[ de_1=y^2-x^3,\; de_2=hx-1,\; de_3=tx-y,\; de_4=tw-1;\]
\[ p_{01}(x)=t^2,\; p_{01}(y)=t^3,\; p_{01}(h)=w^2,\; p_{01}(t)=t,\; p_{01}(w)=w\,.\]
It is interesting to notice that the Reedy cofibrant assumption forces to see the hyperbola $U_{01}$ as a complete intersection of 3 quadrics and a cubic in $\K^5$ and not as a plane affine conic. 
\end{example}

\bigskip
\section{The tangent and cotangent complexes}
\label{section.cotangentcomplex}

It is well known that to every noetherian separated finite-dimensional scheme $X$ over $\K$  are associated the tangent and cotangent complexes. Given a Reedy-Palamodov resolvent $R$ of $X$, the tangent complex is the  class of 
$\Der^*(R,R)$ in the homotopy category of DG-Lie algebras, and then by Theorem~\ref{thm.palamodov}  it controls the deformation theory of $X$. Its cohomology $T^*(X)$ is called tangent cohomology \cite{Pala1,Pala2}; by general results about deformation theory via DG-Lie algebras, $T^1(X)$ is the space of (classical) first order deformations, while $T^2(X)$ is the space of (classical) obstructions, cf. \cite[Thm. 5.1 and Thm. 5.2]{Pala1}.

The cotangent complex $\mathbb{L}_X$ is an object in the (unbounded) derived category of quasi-coherent sheaves 
and it can be used to compute the tangent cohomology by the formula
$T^i(X)=\Ext^i_X(\mathbb{L}_X,\Oh_X)$; moreover $\mathbb{L}_X$ has coherent cohomology and therefore each $T^i(X)$ is finite dimensional whenever $X$ is proper. The standard reference for the cotangent complex and for its application to deformations of schemes and diagrams is \cite{Illusie}.


%
If $X=\Spec(S)$ is an affine $\K$-scheme, then its cotangent complex is defined (up to quasi-isomorphism) as the sheaf associated to the $S$-module $\Omega_{R/\K}\otimes_RS$:
\[ [\widetilde{\Omega_{R/\K}\otimes_RS}] \in \D(\QCoh(X)) \]
where $R\to S$ is a cofibrant replacement in $\CDGA_{\K}^{\le 0}$ and $\Omega_{R/\K}$ denotes the DG-module of K\"ahler differentials over $R$.

According to \cite{BF,Fl}  it is possible to describe a representative of the cotangent complex in terms of 
a Reedy-Palamodov resolvent also in the non affine case.
Our goal is to present another construction  relying on  a certain model for a DG-enhancement for the unbounded derived category of quasi-coherent sheaves described in \cite{FM}.

Recall that for every DG-algebra $S\in\CDGA_{\K}^{\le 0}$ there exists a model structure on the category $\DGMod(S)$ of DG-modules where (\cite{GS,Hov99}):
\begin{itemize}
\item weak equivalences are quasi-isomorphisms,
\item fibrations are degreewise surjective morphisms,
\item a complex $\sF\in\DGMod(S)$ is cofibrant if and only if for every cospan $\sF\xrightarrow{f}\sG\xleftarrow{g}\sH$ with $g$ a surjective quasi-isomorphism there exists a lifting $h\colon \sF\to\sH$ such that $f=gh$,
\item every DG-module is fibrant,
\item cofibrations are degreewise split injective morphisms with cofibrant cokernel.
\end{itemize}

Now, let $X$ be a Noetherian separated finite-dimensional scheme over a field $\K$, fix an open affine covering $\{U_i\}_{i\in I}$ together with its nerve $\sN$ as defined in Section~\ref{section.resolvent}; consider the following diagram
\[ S_{\pal} \colon \sN \to \CDGA_{\K}^{\le 0} \; , \; \qquad \qquad \; S_{\alpha}=\Gamma(U_{\alpha},\Oh_X) \]
as already defined in~\eqref{equ.wtildeX}.
A $S_{\pal}$-module consists of the following data:
\begin{itemize}
\item an object $\sF_{\alpha}\in\DGMod(S_{\alpha})$ for every $\alpha\in\sN$,
\item a morphism $f_{\alpha\beta}\colon\sF_{\alpha}\otimes_{S_{\alpha}}S_{\beta}\to \sF_{\beta}$ in $\DGMod(S_{\beta})$ for every $\alpha\leq\beta$ in $\sN$,
satisfying the cocycle condition $f_{\beta\gamma}\circ(f_{\alpha\beta}\otimes_{S_{\beta}}\id_{S_{\gamma}})=f_{\alpha\gamma}$ for every $\alpha\leq\beta\leq\gamma$ in $\sN$.
\end{itemize}

Notice that in the above definition each map $f_{\alpha\beta}\colon\sF_{\alpha}\otimes_{S_{\alpha}}S_{\beta}\to \sF_{\beta}$ is equivalent to its adjoint morphism $\sF_{\alpha}\to\sF_{\beta}$ in $\DGMod(S_{\alpha})$, where the $S_{\alpha}$-module structure on $\sF_{\beta}$ is given by $S_{\alpha}\to S_{\beta}$.

A morphism $\varphi\colon\sF\to\sG$ between $S_{\pal}$-modules is the datum of a collection of morphisms $\{\varphi_{\alpha}\colon\sF_{\alpha}\to\sG_{\alpha}\}_{\alpha\in\sN}$ such that the diagram
\[ \xymatrix{ \sF_{\alpha}\otimes_{S_{\alpha}}S_{\beta}\ar@{->}[r]^{\varphi_{\alpha}}\ar@{->}[d]_{f_{\alpha\beta}} & \sG_{\alpha}\otimes_{S_{\alpha}}S_{\beta} \ar@{->}[d]^{g_{\alpha\beta}} \\
\sF_{\beta} \ar@{->}[r]_{\varphi_{\beta}} & \sG_{\beta} 	} \]
commutes in $\DGMod(S_{\beta})$. We shall denote by $\Hom_{S_{\pal}}(\sF,\sG)$ the set of such morphisms, and by $\Mod(S_{\pal})$ the category of $S_{\pal}$-modules.

The objects we are mainly interested in are quasi-coherent $S_{\pal}$-modules.
\begin{definition}[{\cite[Definition 3.12]{FM}}]\label{def.qcohS}
In the above notation, an $S_{\pal}$-module $\sF\in\Mod(S_{\pal})$ is called \textbf{quasi-coherent} if for every $\alpha\leq\beta$ in $\sN$ the map
\[ f_{\alpha\beta}\colon\sF_{\alpha}\otimes_{S_{\alpha}}S_{\beta}\to \sF_{\beta} \]
is a quasi-isomorphism of DG-modules over $S_{\beta}$.
\end{definition}
We shall denote by $\QCoh(S_{\pal})$ the full subcategory of quasi-coherent $S_{\pal}$-modules. Notice that the subcategory of quasi-coherent $S_{\pal}$-modules is closed both under $(C,FW)$-factorisations and $(CW,F)$-factorisations, so that it is well-defined the homotopy category $\Ho(\QCoh(S_{\pal}))$ as the Verdier quotient of cofibrant quasi-coherent $S_{\pal}$-modules modulo the class of quasi-isomorphisms. Moreover, there is a natural inclusion functor $\Ho(\QCoh(S_{\pal}))\to \Ho(\Mod(S_{\pal}))$. Definition~\ref{def.qcohS} is motivated by the following result, which was proven in~\cite[Thm. 3.9 and Thm. 5.7]{FM}.

\begin{theorem}\label{thm.equivalence}
The category $\Mod(S_{\pal})$ admits a model structure where both fibrations and weak equivalences are detected levelwise. Moreover, there exists an equivalence of triangulated categories
\[ \Upsilon^{\ast} \colon \D(\QCoh(X)) \to \Ho(\QCoh(S_{\pal})) \; , \qquad \qquad \; \Upsilon^{\ast}[\sF] = [ \{\Gamma(U_{\alpha},\sF)\}_{\alpha\in\sN}\,, ] \]
where $\Ho(\QCoh(S_{\pal}))$ denotes the homotopy category of quasi-coherent $S_{\pal}$-modules.
\end{theorem}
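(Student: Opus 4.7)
The strategy is to treat the two assertions in turn. For the model structure on $\Mod(S_{\pal})$, I would declare weak equivalences and fibrations objectwise: a morphism $\varphi\colon \sF\to\sG$ is a weak equivalence (respectively, a fibration) if and only if each component $\varphi_\alpha\colon\sF_\alpha\to\sG_\alpha$ is a quasi-isomorphism (respectively, a degreewise surjection) in $\DGMod(S_\alpha)$, and cofibrations are defined by the left lifting property against trivial fibrations. Since $\sN$ is a direct Reedy category and each $\DGMod(S_\alpha)$ carries the standard cofibrantly generated projective model structure, the required functorial factorisations are produced by the small object argument, attaching cells of the form $S_\alpha\otimes D^k$ inductively over the latching objects $\colim_{[\sN,\alpha)}\sF$ at each Reedy degree. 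Directedness of $\sN$ ensures the latching data lies at strictly lower degree, so the inductive construction is well-posed.

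For the equivalence, $\Upsilon^{\ast}$ is defined by choosing a K-injective resolution $\sF\to\sI^\bullet$ in $\D(\QCoh(X))$ and setting
\[ \Upsilon^{\ast}[\sF] = \bigl[\{\Gamma(U_\alpha,\sI^\bullet)\}_{\alpha\in\sN}\bigr]. \]
The target lies in $\QCoh(S_{\pal})$ because, for $\alpha\leq\beta$ in $\sN$, the inclusion $U_\beta\hookrightarrow U_\alpha$ is a localisation of affines, so flatness gives that $\Gamma(U_\alpha,\sI^\bullet)\otimes_{S_\alpha}S_\beta\to\Gamma(U_\beta,\sI^\bullet)$ is a quasi-isomorphism; well-definedness on $\Ho$ follows from the essential uniqueness of K-injective resolutions. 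Fully faithfulness is then proved by identifying $\Hom$-sets in $\Ho(\QCoh(S_{\pal}))$ with hyperext on $X$: the descent spectral sequence associated to the affine cover $\{U_\alpha\}$ computes $\Ext^\ast_X(\sF,\sG)$ from the local groups $\Ext^\ast_{S_\alpha}(\Gamma(U_\alpha,\sF),\Gamma(U_\alpha,\sG))$, which match the morphisms computed on the diagram side.

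The main obstacle is essential surjectivity: given a cofibrant quasi-coherent $S_{\pal}$-module $M$, one must construct a complex of quasi-coherent sheaves on $X$ whose $\Upsilon^{\ast}$-image recovers $M$ up to quasi-isomorphism. The cofibrancy of $M$ is used to rectify the structure maps $f_{\alpha\beta}$ to strict isomorphisms $M_\alpha\otimes_{S_\alpha}S_\beta\xrightarrow{\sim}M_\beta$, after which the sheaves $\widetilde{M_\alpha}$ on $U_\alpha$ glue via these descent data to a complex $\sF\in\QCoh(X)$. The delicate step is verifying that this construction is the inverse of $\Upsilon^{\ast}$ in the homotopy category, which requires a \v{C}ech totalisation argument comparing global sections to the assembly of local data; the homotopy invariance and lifting-of-factorisations results of Sections~\ref{section.trivialidempotents} and~\ref{sec.defodiagramalgebra}, transferred from DG-algebras to DG-modules, provide the technical backbone for this rectification.
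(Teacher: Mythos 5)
You should first be aware that the paper does not prove this statement at all: Theorem~\ref{thm.equivalence} is imported verbatim from \cite[Thm. 3.9 and Thm. 5.7]{FM}, so there is no internal argument to compare against. Judged on its own terms, your sketch of the first assertion (the model structure on $\Mod(S_{\pal})$) is essentially sound: for a diagram of rings over a direct category the projective and Reedy structures coincide, the generating (trivial) cofibrations are obtained by applying the left adjoints of the evaluation functors $ev_{\alpha}$ to the generators of $\DGMod(S_{\alpha})$, and the small object argument does the rest. Your description of the latching object as $\colim_{\gamma<\alpha}\sF_{\gamma}\otimes_{S_{\gamma}}S_{\alpha}$ is the right one for this Grothendieck-construction setting.

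The second assertion is where the genuine gaps lie. For full faithfulness you appeal to ``the descent spectral sequence associated to the affine cover,'' but for \emph{unbounded} complexes such a spectral sequence has no automatic convergence; this is exactly the point where the Noetherian, separated, \emph{finite-dimensional} hypotheses on $X$ must enter (finite dimensionality bounds the \v{C}ech direction), and your argument never invokes them. For essential surjectivity, the claim that cofibrancy of a quasi-coherent $S_{\pal}$-module $M$ lets you ``rectify the structure maps $f_{\alpha\beta}$ to strict isomorphisms'' is unjustified: cofibrancy in the projective structure yields lifting properties against levelwise trivial fibrations, not strictness of the maps $M_{\alpha}\otimes_{S_{\alpha}}S_{\beta}\to M_{\beta}$, and strictifying homotopy-coherent descent data is precisely the hard content of the theorem --- asserting it begs the question. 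The workable route (and the one taken in \cite{FM}) is not strictification but the construction of an explicit adjoint $\Upsilon_{\ast}$, a \v{C}ech-type totalisation/homotopy limit reassembling a complex of sheaves from the local data, followed by a verification that the unit and counit are quasi-isomorphisms on (co)fibrant quasi-coherent objects. Finally, the appeal to Sections~\ref{section.trivialidempotents} and~\ref{sec.defodiagramalgebra} is misplaced: those results concern lifting along Artinian thickenings of diagrams of algebras and play no role in establishing this equivalence.
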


For a detailed discussion of the quasi-inverse of the equivalence above we refer to~\cite{FM}.
Here we only point out that the equivalence $\Upsilon^{\ast}$ commutes  in the natural way with restriction to subcoverings. 
If $\widetilde{\sN}\subset \sN$ is the nerve of a subcovering and 
$\tilde{S}_{\pal}\colon \widetilde{\sN}\to \CDGA_{\K}^{\le 0}$ is the corresponding diagram, the natural 
restriction map $\QCoh(S_{\pal})\to \QCoh(\tilde{S}_{\pal})$ is a properly defined exact functor and 
by Theorem~\ref{thm.equivalence}  the induced map 
$\Ho(\QCoh(S_{\pal}))\to \Ho(\QCoh(\tilde{S}_{\pal}))$ is an equivalence of triangulated categories.

By virtue of Theorem~\ref{thm.equivalence}, it is convenient to describe the tangent and cotangent complexes in terms of $S_{\pal}$-modules. To this aim we first need to introduce the global analogue of derivations and of K\"ahler differentials.

\subsection{Global derivations and global K\"ahler differentials}
This subsection is devoted to introduce the global versions of derivations and K\"ahler differentials, in order to define the (homotopy classes of) tangent and cotangent complexes in terms of $S_{\pal}$-modules via the equivalence of Theorem~\ref{thm.equivalence}.

We begin by defining for every morphism $\eta\colon R\to P$ of diagrams in $\Fun(\sC,\CDGA^{\le0}_{\K})$
\[ \Der^*_{\K}(R,P)\subset \prod_{a\in \sC}\Der^*_{\K}(R_a,P_a) \]
the subset of sequences $\{\alpha_a\}_{a\in \sC}$ such that for every morphism 
$f\colon a\to b$ we have $P_f \alpha_b=\alpha_a R_f$, and the structure of $R_a$-module on $P_a$ is induced by $\eta$. Notice that this is consistent with~\eqref{equ.derivazionidiagramma}. Similarly one can define 
\[ \Hom^{\ast}_{S_{\pal}}(\sF,\sG)\subset \prod_{a\in \sN}\Hom^{\ast}_{S_{\pal}}(\sF_{\alpha},\sG_{\alpha}) \]
for every $\sF,\sG\in\Mod(S_{\pal})$.

It is clear that every diagram $R\xrightarrow{\eta} P\xrightarrow{\mu}T$ induces by composition two morphisms 
\[ \Der^*_{\K}(R,P)\xrightarrow{\mu_*}  \Der^*_{\K}(R,T)\xleftarrow{\eta^*}\Der^*_{\K}(P,T)\]
which formally satisfy the usual properties of derivations in the model category $\CDGA^{\le0}_{\K}$. 

Recall that by~\cite{Hin,QuillenCR} for any given $S\in\CDGA_{\K}^{\leq0}$, the functor of K\"ahler differentials admits a Quillen right adjoint given by the trivial extension:
\[ \Omega_{-}\otimes_{-}S\colon \CDGA_{\K}^{\le 0}\downarrow S \rightleftarrows \DGMod^{\le 0}(S) \colon -\oplus S \; . \]
This adjoint pair easily generalizes to the case of diagrams. Let $X$ be a Noetherian separated finite-dimensional scheme over a field $\K$, fix an open affine covering $\{U_i\}_{i\in I}$ together with its nerve $\sN$ as defined in Section~\ref{section.resolvent}. Now consider the corresponding diagram $S_{\pal}\in\Fun(\sN, \CDGA_{\K}^{\le 0})$ defined by
\[ S_{\pal} \colon \sN \to \CDGA_{\K}^{\le 0} \; , \; \qquad \qquad \; S_{\alpha}=\Gamma(U_{\alpha},\Oh_X) \]
as in~\eqref{equ.wtildeX}. 
Then define the functor $\Omega^{\sN}_{-}\otimes_{-}S_{\pal}\colon \Fun(\sN,\CDGA_{\K}^{\le 0})\downarrow S_{\pal} \to \Mod^{\le 0}(S_{\pal})$ as follows:
\begin{itemize}
\item $\Mod^{\le 0}(S_{\pal})\subseteq\Mod(S_{\pal})$ denotes the full subcategory of $S_{\pal}$-modules concentrated in non-negative degrees; it admits a model structure such that any cofibrant object $X\in\Mod^{\le 0}(S_{\pal})$ is also cofibrant when regarded as an object in $\Mod(S_{\pal})$,~\cite[Rem. 3.11]{FM}.
\item $\left(\Omega^{\sN}_{R}\otimes_{R}S_{\pal}\right)_{\alpha} = \Omega_{R_{\alpha}}\otimes_{R_{\alpha}}S_{\alpha}$ for every $R\in\Fun(\sN,\CDGA_{\K}^{\le 0})$ and every $\alpha\in\sN$.
\item for every $\alpha\leq \beta$ the map
\[ \Omega_{R_{\alpha}}\otimes_{R_{\alpha}}S_{\alpha}\otimes_{S_{\alpha}}S_{\beta} = \Omega_{R_{\alpha}}\otimes_{R_{\alpha}}S_{\beta}\to \Omega_{R_{\beta}}\otimes_{R_{\beta}}S_{\beta} \]
is the one induced by K\"ahler differentials.
\end{itemize}

Notice that in the setting of Definition~\ref{def.cotangentcomplex} we have $\Omega_R^{\sN}\otimes_RS_{\pal}=\sL_R$. Moreover, there exists a bi-natural isomorphism
\[ \Hom_{ S_{\pal} }^{\ast}(\Omega_P^{\sN}\otimes_PS_{\pal}, \sF) \cong \Der^{\ast}_{\K}(P,\sF) \]
for every $P\in \Fun(\sN,\CDGA_{\K}^{\le 0})$ and every $\sF\in\Mod^{\le 0}(S_{\pal})$.

\begin{remark}\label{rmk.cofibrantcotangent}
It is easy to show that the functor $\Omega^{\sN}_{-}\otimes_{-}S_{\pal}\colon \Fun(\sN,\CDGA_{\K}^{\le 0})\downarrow S_{\pal} \to \Mod^{\le 0}(S_{\pal})$ defined above admits a right Quillen adjoint as in the affine case. In particular, $\Omega^{\sN}_{-}\otimes_{-}S_{\pal}$ maps Reedy-cofibrant diagrams to cofibrant $S_{\pal}$-modules. Moreover, by Ken Brown's Lemma it preserves weak equivalences between cofibrant objects.
\end{remark}

\begin{lemma}\label{lemma.derivations1}
In the above setup, if $R$ is Reedy cofibrant and $\mu$ is a weak-equivalence, then $\Der^*_{\K}(R,P)\xrightarrow{\mu_*}  \Der^*_{\K}(R,T)$ is a quasi-isomorphism.
Moreover, if $\eta\colon R\to P$ is a weak equivalence between cofibrant objects in $\Fun(\sN,\CDGA_{\K}^{\le 0})$, 
then:
\begin{enumerate}
\item the map $\eta^{\ast}\colon \Der^{\ast}_{\K}(P,T)\to\Der_{\K}^{\ast}(R,T)$ defined above is a quasi-isomorphism,
\item $\Der_{\K}^{\ast}(R,R)$ and $\Der_{\K}^{\ast}(P,P)$ are quasi-isomorphic as DG-Lie algebras.
\end{enumerate}

\end{lemma}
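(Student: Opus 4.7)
The plan is to deduce all three assertions from the bi-natural isomorphism
\[ \Hom^{\ast}_{P}(\Omega^{\sN}_R \otimes_R P, \sF) \cong \Der^{\ast}_{\K}(R, \sF), \]
valid for any morphism $R \to P$ in $\Fun(\sN, \CDGA_{\K}^{\le 0})$ and any $P$-module $\sF$ (the evident generalization, with $P$ in place of $S_{\pal}$, of the adjunction recorded just before Remark~\ref{rmk.cofibrantcotangent}), together with the left Quillen adjoint property of $\Omega^{\sN}_{-} \otimes_{-} P$ and Ken Brown's lemma. For the first (unlabelled) claim: Reedy cofibrancy of $R$ makes the object $R \to P$ cofibrant in $\Fun(\sN, \CDGA_{\K}^{\le 0}) \downarrow P$, so $\Omega^{\sN}_R \otimes_R P$ is a cofibrant $P$-module (Remark~\ref{rmk.cofibrantcotangent}). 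Under the adjunction, $\mu_{\ast}$ identifies with $\Hom^{\ast}_P(\Omega^{\sN}_R \otimes_R P, -)$ applied to the weak equivalence $\mu\colon P \to \mu^{\ast} T$ of $P$-modules, and Ken Brown's lemma concludes (all $P$-modules being fibrant in $\Mod^{\le 0}(P)$).

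Item~(1) is analogous: the left Quillen functor $\Omega^{\sN}_{-} \otimes_{-} T$ sends the weak equivalence $\eta\colon R \to P$ between cofibrant objects to a weak equivalence of cofibrant $T$-modules $\Omega^{\sN}_R \otimes_R T \to \Omega^{\sN}_P \otimes_P T$ (Ken Brown again); applying $\Hom^{\ast}_T(-, T)$ to this weak equivalence between cofibrant (and fibrant) objects yields $\eta^{\ast}$ as a quasi-isomorphism.

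For item~(2), I first factor $\eta = \pi \circ i$ with $i\colon R \to Q$ a Reedy trivial cofibration and $\pi\colon Q \to P$ a Reedy trivial fibration (so $Q$ is also Reedy cofibrant), and build a DG-Lie algebra quasi-isomorphism zigzag connecting $\Der_{\K}^{\ast}(R, R)$, $\Der_{\K}^{\ast}(Q, Q)$, and $\Der_{\K}^{\ast}(P, P)$. For the first step, introduce the DG-Lie subalgebra
\[ L(i) = \{(\alpha, \beta) \in \Der_{\K}^{\ast}(R, R) \oplus \Der_{\K}^{\ast}(Q, Q) \mid \beta \circ i = i \circ \alpha\}, \]
which is the strict pullback of $i_{\ast}$ and $i^{\ast}$, equipped with two natural DG-Lie projections. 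Because $i$ is a Reedy cofibration, the left Quillen functor produces a cofibration of $Q$-modules $\Omega^{\sN}_R \otimes_R Q \to \Omega^{\sN}_Q$, whence $i^{\ast}\colon \Der_{\K}^{\ast}(Q, Q) \to \Der_{\K}^{\ast}(R, Q)$ is a degreewise surjection; combined with the quasi-isomorphism statements for $i_{\ast}$ (first claim) and $i^{\ast}$ (item~(1)), this forces $L(i)$ to be a homotopy pullback and both projections to be DG-Lie quasi-isomorphisms. The second step, relating $\Der_{\K}^{\ast}(Q, Q)$ to $\Der_{\K}^{\ast}(P, P)$, is handled by an analogous construction $L(s)$ built from the section $s\colon P \to Q$ of $\pi$ (which exists and is itself a weak equivalence by the lifting property, since $P$ is cofibrant and $\pi$ is a trivial fibration), with the requisite surjectivity coming from the cofibrancy of $P$ and the section property.

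The principal obstacle is in item~(2): verifying that the strict pullbacks $L(i)$ and $L(s)$ are genuine homotopy pullbacks reduces to exhibiting one leg as a degreewise surjection, which is precisely where the cofibration property of the K\"ahler differential functor enters. Handling the trivial fibration $\pi$ through its section $s$ (rather than $\pi$ directly) is essential, because fibrations in $\CDGA_{\K}^{\le 0}$ are surjective only in strictly negative degrees and therefore do not directly produce the required surjectivity at the level of derivation complexes.
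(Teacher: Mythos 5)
Your handling of the first claim and of item (1) is correct and is precisely the route the paper intends (its proof is a one-line appeal to Remark~\ref{rmk.cofibrantcotangent} and ``standard arguments''): convert derivations into module homomorphisms via the K\"ahler adjunction, use that $\Omega^{\sN}_{-}\otimes_{-}(-)$ is left Quillen, and conclude by Ken Brown's lemma. The first half of your item (2) is also sound: for the trivial cofibration $i$ the leg $i^{\ast}$ of $L(i)$ is degreewise surjective because $\Omega^{\sN}_R\otimes_RQ\to\Omega^{\sN}_Q$ is a degreewise split injection, so the strict pullback is a homotopy pullback and both projections are DG-Lie quasi-isomorphisms.

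The gap is in the second step of item (2). For $L(s)=\Der^{\ast}_{\K}(P,P)\times_{\Der^{\ast}_{\K}(P,Q)}\Der^{\ast}_{\K}(Q,Q)$ you need one leg to be degreewise surjective, and neither is: $s_{\ast}$ is induced by the split \emph{injection} of $P$-modules $s\colon P\to Q$ (split by $\pi$), so it is injective with cokernel $\Hom^{\ast}_P(\Omega^{\sN}_P,\ker\pi)$ rather than surjective; and $s^{\ast}$ would be surjective only if $\Omega^{\sN}_P\otimes_PQ\to\Omega^{\sN}_Q$ were split injective, which requires $s$ to be a cofibration --- a section of a trivial fibration out of a cofibrant object is a weak equivalence but not, in general, a cofibration. ``Cofibrancy of $P$ and the section property'' does not supply this. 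Moreover, the obstacle you invoke to justify avoiding $\pi$ is not actually there: fibrations in $\CDGA^{\le 0}_{\K}$ are only required to be surjective in strictly negative degrees, but \emph{trivial} fibrations are automatically surjective in degree $0$ as well (lift a class in $H^0$ and correct by a boundary coming from degree $-1$, where surjectivity holds). Hence $\pi\colon Q\to P$ is degreewise surjective, and since $\Omega^{\sN}_Q$ is a cofibrant, hence degreewise projective, $Q$-module, the map $\pi_{\ast}\colon\Der^{\ast}_{\K}(Q,Q)\cong\Hom^{\ast}_Q(\Omega^{\sN}_Q,Q)\to\Hom^{\ast}_Q(\Omega^{\sN}_Q,P)\cong\Der^{\ast}_{\K}(Q,P)$ is degreewise surjective (by Reedy induction over the latching maps in the diagram case). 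Replacing $L(s)$ by $L(\pi)=\Der^{\ast}_{\K}(Q,Q)\times_{\Der^{\ast}_{\K}(Q,P)}\Der^{\ast}_{\K}(P,P)$ therefore repairs the argument: both legs are quasi-isomorphisms by the first claim and item (1), one leg is surjective, so both projections are DG-Lie quasi-isomorphisms and the zigzag is complete.
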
 
\begin{proof}
It  is an easy consequence of Remark~\ref{rmk.cofibrantcotangent} and of standard arguments, see 
\cite[Rem. 6.9]{MM} and \cite[Lemma 6.14]{FM}. 
\end{proof}

In particular, by Lemma~\ref{lemma.derivations1} it follows that in the setting of Corollary~\ref{corollary.deformations} the homotopy class of the DG-Lie algebra $\Der_{\K}^{\ast}(R,R)$ does not depend on the choice of the Reedy cofibrant replacement $R$.

\begin{example}\label{example.defmorphism2}
Let $f\colon R\to S$ be a surjective morphism of cofibrant DG-algebras with ideal $I$. Then the deformations of $f$ are controlled by the DG-Lie algebra 
\[ M=\{\alpha\in  \Der_{\K}^*(R,R)\mid \alpha(I)\subset I\}\cong
\Der^*_{\K}(R,R)\times_{\Der^*_{\K}(R,S)}\Der^*_{\K}(S,S)\,.\]

Consider a factorisation $f\colon R\xrightarrow{i}H\xrightarrow{p}S$ with $i$ a cofibration and 
$p$ a trivial fibration. Then $R\to H$ is a Reedy cofibrant resolution of $R\to S$ and 
we need to prove that the DG-Lie algebra $M$ is quasi-isomorphic to 
\[ L=\Der^*_{\K}(R,R)\times_{\Der^*_{\K}(R,H)}\Der^*_{\K}(H,H)\,.\] 
The obvious composition maps give a commutative diagram of complexes
\[\xymatrix{
\Der^*_{\K}(R,R)\ar@{=}[d]\ar[r]&\Der^*_{\K}(R,H)\ar@{>>}[d]&\Der^*_{\K}(H,H)\ar@{>>}[d]\ar@{>>}[l]\\
\Der^*_{\K}(R,R)\ar@{=}[d]\ar@{>>}[r]&\Der^*_{\K}(R,S)&\Der^*_{\K}(H,S)\ar@{>>}[l]\\
\Der^*_{\K}(R,R)\ar@{>>}[r]&\Der^*_{\K}(R,S)\ar@{=}[u]&\Der^*_{\K}(S,S)\ar[u]\ar[l]}\]
where the double head arrows denote  surjective morphisms and every vertical arrow is a quasi-isomorphism. Notice that the assumption that $S$ is cofibrant is used to ensure that the map
$\Der^*_{\K}(S,S)\to \Der^*_{\K}(H,S)$ is a quasi-isomorphism.
By coglueing lemma we have two quasi-isomorphisms 
\[ \xymatrix{L\ar[r]^-{l}& \Der^*_{\K}(R,R)\times_{\Der^*_{\K}(R,S)}\Der^*_{\K}(H,S)&M\ar[l]_-{m}}\]
and an easy  direct inspection shows that $l$ is surjective. 
In order to finish the proof it is sufficient to observe that the fibre product of the above cospan
is the  DG-Lie algebra of derivations of the diagram $R\to H\to S$,  defined as in \eqref{equ.derivazionidiagramma}.
\end{example}

\begin{example}[Derived functor of points]
Given a morphism $f\colon B\to \K$ in $\CDGA^{\le 0}_{\K}$ we are interested to morphisms in the 
homotopy category 
\[ B\to A\qquad A\in \DGArt_{\K}^{\le 0}\]
lifting $f$. Given a cofibrant resolution $p\colon R\to B$, the above morphisms can be interpreted
as deformations of the diagram $fp\colon  R\to \K$ inducing a trivial deformation of $R$. 
Denoting by $I\subset R$ the kernel of $fp$, by Examples~\ref{ex.defoalgebramorphism} and \ref{example.defmorphism2} the corresponding DG-Lie algebra is
equal to the homotopy fibre of the inclusion 
\[ \{\alpha\in  \Der_{\K}^*(R,R)\mid \alpha(I)\subset I\}\to \Der_{\K}^*(R,R)\,.\]
\end{example}

\bigskip

\subsection{The quasi-coherent $S_{\pal}$-module corresponding to the cotangent complex}

Let $X$ be a Noetherian separated finite-dimensional scheme over a field $\K$, with a fixed open affine covering $\{U_i\}_{i\in I}$. Consider the diagram $S_{\pal}$ as above together with a cofibrant replacement $R\to S_{\pal}$ in $\Fun(\sN,\CDGA_{\K}^{\le 0})$. Hence, according to Definition~\ref{def.resolvent}, $R$ is a Reedy-Palamodov resolvent for $X$.

Notice that by Lemma~\ref{lemma.derivations1} the tangent complex $\Der^{\ast}_{\K}(R,R)$ 
is well-defined in the homotopy category of DG-Lie algebras, i.e., it does not depend on the Reedy-Palamodov resolvent $R$. Moreover, it is quasi-isomorphic (as a complex) to  $\Der^{\ast}_{\K}(R,S_{\pal})$.

For what concerns the cotangent complex, we shall make use of the equivalence of Theorem~\ref{thm.equivalence}, so that we introduce the definition in terms of the homotopy category of quasi-coherent $S_{\pal}$-modules.

\begin{definition}[The cotangent complex]\label{def.cotangentcomplex}
In the above notation, define the cotangent complex to be the class $[\sL_R]\in\Ho(\QCoh(S_{\pal}))$, where the $S_{\pal}$-module $\sL_R$ is defined by:
\begin{itemize}
\item $\sL_{R,\alpha} = \Omega_{R_{\alpha}}\otimes_{R_{\alpha}}S_{\alpha}$ for every $\alpha\in\sN$,
\item  for every $\alpha\leq\beta$ the map $l_{\alpha\beta}\colon \sL_{R,\alpha}\otimes_{S_{\alpha}}S_{\beta} \to \sL_{R,\beta}$ is obtained applying the functor $\Omega_{-}\otimes_{-}S_{\beta}$ to the map $R_{\alpha}\to R_{\beta}$.
\end{itemize}
\end{definition}

Observe that by Remark~\ref{rmk.cofibrantcotangent} the homotopy class $[\sL_R]$ does not depend on the choice of the resolvent. Therefore, in order to prove that Definition~\ref{def.cotangentcomplex} is well-posed we only need to show that the $S_{\pal}$-module $\sL_R$ is quasi-coherent in the sense of Definition~\ref{def.qcohS}. We proceed by proving a series of preliminary lemmas.

The assumptions in the following lemma are motivated by the fact that, if $U=\Spec(A)$ is an affine scheme and $V=\Spec(B)\subset U$ is an open affine subset, then  the morphism $A\to B$ is flat and the natural map $B\otimes_A B\to B$ is an isomorphism.

\begin{lemma}\label{lemma.cotangent1}
Let $A\to B$ be a flat morphism in $\CDGA_{\K}^{\le 0}$ such that the natural map $B\otimes_A B\to B$ is a weak equivalence.
Consider a commutative diagram 
\[ \xymatrix{R\ar[r]^i\ar[d]&S\ar[d]\\
A\ar[r]&B}\]
with the vertical arrow cofibrant replacements and $i$ a cofibration.
Then $\Omega_R\otimes_RB\to \Omega_S\otimes_S  B$ is a trivial cofibration  of $B$-modules.
\end{lemma}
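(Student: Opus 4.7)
The plan is to fit $\Omega_R\otimes_R B\to \Omega_S\otimes_S B$ into a short exact sequence of DG $B$-modules whose cokernel is identified with the module of K\"ahler differentials of a trivial cofibration of DG-algebras, hence acyclic.

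Since $i\colon R\to S$ is a cofibration in $\CDGA_{\K}^{\le 0}$, the canonical transitivity sequence
\[ 0\to \Omega_R\otimes_R S\to \Omega_S\to \Omega_{S/R}\to 0 \]
is a short exact sequence of DG $S$-modules, split at the graded level (by the semifree description of a cofibration), with $\Omega_{S/R}$ a cofibrant DG $S$-module. Applying the exact functor $-\otimes_S B$ produces the analogous short exact sequence
\[ 0\to \Omega_R\otimes_R B\to \Omega_S\otimes_S B\to \Omega_{S/R}\otimes_S B\to 0, \]
still split at the graded level and with cofibrant cokernel; this already shows that $\Omega_R\otimes_R B\to \Omega_S\otimes_S B$ is a cofibration of DG $B$-modules, and reduces the proof to showing that the cokernel $\Omega_{S/R}\otimes_S B$ is acyclic.

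For this, the given square is first verified to be a homotopy push-out: by left properness $S\to S\otimes_R A$ is a weak equivalence (as the push-out of the weak equivalence $R\to A$ along the cofibration $R\to S$), and two-out-of-three applied to $S\to S\otimes_R A\to B$ then forces $S\otimes_R A\to B$ to be a weak equivalence. Base changing along the flat morphism $A\to B$ and using the hypothesis $B\otimes_A B\simeq B$ gives $S\otimes_R B\simeq B\otimes_A B\simeq B$. Consequently $B\to S\otimes_R B$ is a trivial cofibration of DG-algebras (a cofibration as the push-out of $i$, a weak equivalence by the above), and its module of K\"ahler differentials is therefore acyclic. By base change for K\"ahler differentials along the push-out $R\to S$, $R\to B$, one has $\Omega_{(S\otimes_R B)/B}=\Omega_{S/R}\otimes_R B$. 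Finally, cofibrancy of $\Omega_{S/R}$ over $S$ guarantees that the functor $\Omega_{S/R}\otimes_S(-)$ preserves the weak equivalence $S\otimes_R B\to B$, giving
\[ \Omega_{S/R}\otimes_S B\;\simeq\;\Omega_{S/R}\otimes_S(S\otimes_R B)\;=\;\Omega_{S/R}\otimes_R B\;\simeq\; 0. \]

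The most delicate step is this last chain of identifications, where the different $S$-module structures on $B$ (via $S\to B$) and on $S\otimes_R B$ (via the first factor inclusion) must be distinguished; in particular one must combine the base-change isomorphism for K\"ahler differentials along a ring push-out with the fact that tensoring a weak equivalence of DG $S$-modules with the cofibrant object $\Omega_{S/R}$ remains a quasi-isomorphism in the projective model structure on DG $S$-modules.
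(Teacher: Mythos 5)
Your argument is correct and essentially reproduces the paper's proof: both reduce, via the transitivity sequence and cofibrancy of $\Omega_{S/R}$ as an $S$-module, to the acyclicity of $\Omega_{S/R}\otimes_S B$, and both deduce that acyclicity from a trivial cofibration obtained by pushing out $i$, using left properness, two-out-of-three, flatness of $A\to B$, and the hypothesis that $B\otimes_A B\to B$ is a weak equivalence. The only (cosmetic) difference is that the paper's auxiliary trivial cofibration is the codiagonal $S\to S\otimes_R S$, subsequently compared with $B\otimes_A B$, whereas yours is the slightly more direct $B\to S\otimes_R B$.
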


\begin{proof}

Let $j\colon S\to S\otimes_RS$ be the push-out of $i$ by itself. We first show that $j$ is  a trivial cofibration. 
By model category axioms cofibrations are closed under pushouts, so that we only need to prove that $j$ is a weak equivalence. Since the category $\CDGA^{\le 0}_{\K}$ is left proper and $i\colon R\to S$ is a cofibration, the natural maps
\[ S\otimes_R S\to S\otimes_R B,\qquad S=S\otimes_R R\to S\otimes_R A,\]
are weak equivalences. By the universal property of push-out we have a diagram 
\[\xymatrix{R\ar[d]^f\ar[r]^i&S\ar[d]^g\ar@(r,u)[ddr]&\\
A\ar[r]\ar@(d,l)[rrd]&B&\\
&&S\otimes_R A\ar[ul]_-{q}}\]
and $q$ is a weak equivalence by the 2 out of 3 property.
Now, since $A\to B$ is flat, the composite map 
\[ S\otimes_R S\to S\otimes_R B=(S\otimes _R A)\otimes_A B\to B\otimes_A B\]
is a weak equivalence. Therefore the lemma follows by the 2 out of 3 property  applied to the commutative diagram
\[\xymatrix{S\ar[r]^-{j}\ar[dr]_{\Id}&S\otimes_RS\ar[d]\ar[r]&B\otimes_AB\ar[d]&\\
&S\ar[r]&B&.}\]

By Remark~\ref{rmk.cofibrantcotangent} the morphism
$\Omega_R\otimes_RB\to \Omega_S\otimes_S  B$ is a cofibration (hence injective); in view of the standard exact sequence 
\[ \Omega_R\otimes_RB\to \Omega_S\otimes_S  B\to \Omega_{S/R}\otimes_S B\to 0\,,\]
it is sufficient to show that $\Omega_{S/R}\otimes_S B$ is acyclic.
Since $S\to S\otimes_R S$ is a trivial cofibration the module 
\[ \Omega_{S\otimes_RS/S}=\Omega_{S/R}\otimes_RS=\Omega_{S/R}\otimes_SS\otimes_RS\]
is acyclic. Since $S\otimes_RS\to 
B\otimes_AB\to B$ is a weak equivalence and 
$\Omega_{S/R}$ is cofibrant as an $S$-module, there exists a weak equivalence 
\[ \Omega_{S/R}\otimes_SS\otimes_RS\to \Omega_{S/R}\otimes_S B\,.\]
\end{proof}

\begin{proposition}\label{prop.cotangentqcoh}
Let $A\to B$ be a flat morphism in $\CDGA_{\K}^{\le 0}$ such that the natural map $B\otimes_A B\to B$ is a weak equivalence.
Consider a commutative diagram 
\[ \xymatrix{R\ar[r]^i\ar[d]&S\ar[d]\\
A\ar[r]&B}\]
with the vertical arrow cofibrant replacements.
Then $\Omega_R\otimes_RB\to \Omega_S\otimes_S  B$ is a weak equivalence.
\end{proposition}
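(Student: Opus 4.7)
The plan is to reduce Proposition~\ref{prop.cotangentqcoh} to the special case handled by Lemma~\ref{lemma.cotangent1} via a standard factorisation argument. Using the model structure on $\CDGA_{\K}^{\le 0}$, factor $i\colon R\to S$ as a cofibration $j\colon R\to T$ followed by a trivial fibration $p\colon T\to S$, and view $T$ as an object over $B$ by means of the composite $T\xrightarrow{p}S\to B$. Then $T$ is cofibrant (since $\K\to R\to T$ is a composition of cofibrations), the structural map $T\to B$ is a weak equivalence by two-out-of-three applied to $p$ and $S\to B$, and the square
\[ \xymatrix{R\ar[r]^j\ar[d]&T\ar[d]\\
A\ar[r]&B} \]
commutes: indeed $R\to T\to S\to B$ agrees with $R\to S\to B$, which in turn coincides with $R\to A\to B$ by hypothesis. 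Hence Lemma~\ref{lemma.cotangent1} applies to this new square and yields that $\Omega_R\otimes_R B\to \Omega_T\otimes_T B$ is a trivial cofibration of $B$-modules, in particular a weak equivalence.

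It remains to show that $\Omega_T\otimes_T B\to \Omega_S\otimes_S B$ is a weak equivalence, after which the proposition follows by composition. For this I would invoke the Quillen adjunction
\[ \Omega_{-}\otimes_{-}B\colon \CDGA_{\K}^{\le 0}\downarrow B \longrightarrow \DGMod(B) \]
(with right adjoint the trivial extension $M\mapsto M\oplus B$) recalled in the text preceding Definition~\ref{def.cotangentcomplex}. Since $T$ and $S$ are both cofibrant in the undercategory $\CDGA_{\K}^{\le 0}\downarrow B$ (cofibrancy there coincides with cofibrancy as DG-algebras) and $p\colon T\to S$ is a weak equivalence between them in this undercategory, Ken Brown's Lemma applied to the left Quillen functor $\Omega_{-}\otimes_{-}B$ yields that $\Omega_T\otimes_T B\to \Omega_S\otimes_S B$ is a weak equivalence of $B$-modules.

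The argument is essentially formal once Lemma~\ref{lemma.cotangent1} is in hand: the only thing to verify is that the factorisation of $i$ can be made compatible with the maps down to $B$, which is immediate from the construction. The substantive difficulty, namely controlling $\Omega_{S/R}\otimes_S B$ under the flatness and near-idempotency hypothesis on $A\to B$, has already been dealt with in the proof of Lemma~\ref{lemma.cotangent1}, so no new obstacle arises here; the role of the present proposition is precisely to propagate that lemma from the case of a cofibration to that of an arbitrary morphism between cofibrant replacements.
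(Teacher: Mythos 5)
Your argument is correct, and it differs from the paper's in one genuine respect. The paper does not factor $i$ itself; it takes a (C,FW)-factorisation of the natural map $R\otimes S\to S$, producing a $T$ that receives \emph{cofibrations} from both $R$ and $S$ (namely $j\colon R\to T$ and a section $h\colon S\to T$ of the trivial fibration $p\colon T\to S$). It then applies Lemma~\ref{lemma.cotangent1} twice --- once to $j$ over $A\to B$ and once to $h$ over $\id_B$, where the flatness and $B\otimes_BB\simeq B$ hypotheses hold trivially --- and concludes that $\Omega_T\otimes_TB\to\Omega_S\otimes_SB$ is a weak equivalence from the fact that $\Omega_S\otimes_SB\to\Omega_T\otimes_TB\to\Omega_S\otimes_SB$ is the identity, by two-out-of-three. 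You instead factor $i$ directly, apply Lemma~\ref{lemma.cotangent1} only once, and dispose of the remaining trivial fibration $p\colon T\to S$ by Ken Brown's lemma for the left Quillen functor $\Omega_{-}\otimes_{-}B$ on the overcategory $\CDGA_{\K}^{\le 0}\downarrow B$; your verifications that $T\to B$ is again a cofibrant replacement and that the new square commutes over $B$ are exactly right, and the Quillen adjunction you invoke is the one the paper records just before Remark~\ref{rmk.cofibrantcotangent}, so nothing external is needed. What each route buys: the paper's proof stays entirely within the scope of Lemma~\ref{lemma.cotangent1} and elementary retraction/two-out-of-three reasoning, at the cost of the slightly less obvious factorisation of $R\otimes S\to S$; yours is more economical (one application of the lemma) but leans on the Quillen-adjunction machinery for the final step. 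Both are complete proofs.
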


\begin{proof}  By taking a (C,FW)-factorisation of the natural map $R\otimes S\to S$ we have the diagrams 
\[ \xymatrix{&T\ar[d]^{p}\\
R\ar[r]^i\ar[ur]^j\ar[d]&S\ar[d]^g\\
A\ar[r]&B}
\qquad \qquad
\xymatrix{ S\ar[r]^h\ar[d]_{g} & T\ar[d]^{g\circ p} \\
B\ar[r]_{\id} & B 	} \]
with $j$ cofibration and $p$ a trivial fibration admitting a  cofibration $h\colon S\to T$ as a section.
Hence by Lemma~\ref{lemma.cotangent1} the maps
\[ \Omega_R\otimes_RB\to \Omega_T\otimes_TB \leftarrow \Omega_S\otimes_SB \]
induced by $j$ and $h$ are trivial cofibrations.
Moreover, the composition
\[ \Omega_S\otimes_SB \to \Omega_T\otimes_TB \to \Omega_S\otimes_SB \]
is the identity being $h$ a section of $p$; therefore $\Omega_T\otimes_TB \to \Omega_S\otimes_SB$ is a weak equivalence.
The statement follows by considering the composition $\Omega_R\otimes_RB\to \Omega_T\otimes_TB \to \Omega_S\otimes_SB$.

\end{proof}

We are now ready to prove that Definition~\ref{def.cotangentcomplex} is well-posed. Recall that by Definition~\ref{def.qcohS} a $S_{\pal}$-module $\sF\in\Mod(S_{\pal})$ is called quasi-coherent if the map $f_{\alpha\beta}\colon \sF_{\alpha}\otimes_{S_{\alpha}}S_{\beta}\to S_{\beta}$ is a quasi-isomorphism for every $\alpha\leq\beta$ in $\sN$.

\begin{theorem}
Let $X$ be a Noetherian separated finite-dimensional scheme over a field $\K$, with a fixed open affine covering $\{U_i\}_{i\in I}$. Consider the associated diagram $S_{\pal}$ as in~\eqref{equ.wtildeX}, together with a Reedy-Palamodov resolvent $R\to S_{\pal}$. Then the $S_{\pal}$-module $\sL_R$ defined in~\ref{def.cotangentcomplex} is cofibrant and quasi-coherent.
\begin{proof}
Since $\sL_R=\Omega_R^{\sN}\otimes_RS_{\pal}$, the statement immediately follows by Remark~\ref{rmk.cofibrantcotangent} and Proposition~\ref{prop.cotangentqcoh}.
\end{proof}
\end{theorem}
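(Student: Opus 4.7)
The strategy is to split the claim into its two halves and dispatch each using the preparatory results already assembled in the subsection. For cofibrancy, I would invoke Remark~\ref{rmk.cofibrantcotangent} directly: by hypothesis $R\to S_{\pal}$ is a Reedy-cofibrant resolution, so $R$ is Reedy-cofibrant in $\Fun(\sN,\CDGA_{\K}^{\le 0})\downarrow S_{\pal}$; since $\Omega^{\sN}_{-}\otimes_{-}S_{\pal}$ is a left Quillen functor into $\Mod^{\le 0}(S_{\pal})$, it carries $R$ to a cofibrant object $\sL_R$, which is still cofibrant after the inclusion $\Mod^{\le 0}(S_{\pal})\subset\Mod(S_{\pal})$.

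For quasi-coherence, the task is to verify that for every comparable pair $\alpha\le\beta$ in $\sN$ the structure map
\[ l_{\alpha\beta}\colon \sL_{R,\alpha}\otimes_{S_{\alpha}}S_{\beta}\;=\;\Omega_{R_{\alpha}}\otimes_{R_{\alpha}}S_{\beta}\;\longrightarrow\;\Omega_{R_{\beta}}\otimes_{R_{\beta}}S_{\beta}\;=\;\sL_{R,\beta} \]
is a quasi-isomorphism of $S_{\beta}$-modules. I would reduce this to Proposition~\ref{prop.cotangentqcoh} applied to the commutative square with vertical arrows $R_{\alpha}\to S_{\alpha}$ and $R_{\beta}\to S_{\beta}$ and horizontal arrows $R_{\alpha}\to R_{\beta}$ and $S_{\alpha}\to S_{\beta}$.

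Thus the real content is checking the three hypotheses of Proposition~\ref{prop.cotangentqcoh}. First, the vertical maps $R_{\alpha}\to S_{\alpha}$ and $R_{\beta}\to S_{\beta}$ are cofibrant replacements: each $R_{a}$ is cofibrant because the latching map $L_aR\to R_a$ of a Reedy cofibrant diagram is a cofibration with cofibrant source (by induction on Reedy degree), and these maps are objectwise quasi-isomorphisms by the definition of a Reedy cofibrant resolution. Second, the map $S_{\alpha}\to S_{\beta}$ is flat: it comes from the open inclusion of affine schemes $U_{\beta}\hookrightarrow U_{\alpha}$, hence is a localization. Third, the natural map $S_{\beta}\otimes_{S_{\alpha}}S_{\beta}\to S_{\beta}$ is even an isomorphism, since it algebraically expresses the identity $U_{\beta}\times_{U_{\alpha}}U_{\beta}=U_{\beta}$ for the open immersion $U_{\beta}\subset U_{\alpha}$.

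The only potential obstacle is a bookkeeping one: making sure that the square appearing in Proposition~\ref{prop.cotangentqcoh} need not have $R_{\alpha}\to R_{\beta}$ be a cofibration, only that the two verticals are cofibrant replacements, so no additional factorisation is needed. With the three inputs above, Proposition~\ref{prop.cotangentqcoh} gives that $l_{\alpha\beta}$ is a weak equivalence for every $\alpha\le\beta$, which is exactly the quasi-coherence condition of Definition~\ref{def.qcohS}, completing the proof.
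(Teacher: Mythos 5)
Your proposal is correct and follows exactly the paper's route: cofibrancy via Remark~\ref{rmk.cofibrantcotangent} (since $\Omega^{\sN}_{-}\otimes_{-}S_{\pal}$ is left Quillen and $R$ is Reedy cofibrant) and quasi-coherence via Proposition~\ref{prop.cotangentqcoh} applied to each square $R_{\alpha}\to R_{\beta}$ over $S_{\alpha}\to S_{\beta}$, whose hypotheses (flatness of the localization and $S_{\beta}\otimes_{S_{\alpha}}S_{\beta}\cong S_{\beta}$ for the open immersion $U_{\beta}\subset U_{\alpha}$) you verify correctly. The paper simply compresses this into one line, and your observation that the proposition, unlike Lemma~\ref{lemma.cotangent1}, does not require the horizontal map to be a cofibration is exactly the reason the proposition is the result being invoked.
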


Because of Theorem~\ref{thm.palamodov}, it is important to concretely understand the homotopy class of the DG-Lie algebra of derivations of a Reedy-Palamodov resolvent. To this aim, the next result relates the cohomology of derivations of a Reedy-Palamodov resolvent with the cotangent complex.

\begin{theorem}\label{thm.cohomology}
Let $X$ be a Noetherian separated finite-dimensional scheme over a field $\K$, with a fixed open affine covering $\{U_i\}_{i\in I}$ and its nerve $\sN$. Consider the associated diagram $S_{\pal}$ as in~\eqref{equ.wtildeX}, together with a Reedy-Palamodov resolvent $R\to S_{\pal}$.  Then there exist an isomorphism
\[ H^{\ast}\left( \Der_{\K}^{\ast}(R,R) \right) \cong H^{\ast}\left( \Hom^{\ast}_{S_{\pal}}(\sL_R,S_{\pal})\right) \; . \]
\begin{proof}
By Lemma~\ref{lemma.derivations1} the map
\[ \Der_{\K}^{\ast}(R,R) \to \Der_{\K}^{\ast}(R,S_{\pal})\cong \Hom^{\ast}_R(\Omega_R^{\sN},S_{\pal}) \cong \Hom^{\ast}_{S_{\pal}}(\Omega_R^{\sN}\otimes_RS_{\pal},S_{\pal}) \]
is a quasi-isomorphism.
\end{proof}
\end{theorem}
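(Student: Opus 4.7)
The plan is to exhibit the isomorphism at the chain level via a two-step quasi-isomorphism
\[ \Der^{\ast}_{\K}(R,R) \xrightarrow{\;\sim\;} \Der^{\ast}_{\K}(R, S_{\pal}) \xrightarrow{\;\cong\;} \Hom^{\ast}_{S_{\pal}}(\sL_R, S_{\pal}), \]
and then pass to cohomology. The intermediate object $\Der^{\ast}_{\K}(R, S_{\pal})$ is the natural bridge: on the left it receives the canonical map induced by the augmentation $R \to S_{\pal}$, and on the right it is formally adjoint to a Hom-complex on the module of K\"ahler differentials.

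The first step uses Lemma~\ref{lemma.derivations1} directly. Since $R \to S_{\pal}$ is a Reedy cofibrant replacement, hence in particular a weak equivalence with $R$ Reedy cofibrant, the post-composition map
\[ \Der^{\ast}_{\K}(R,R) \longrightarrow \Der^{\ast}_{\K}(R, S_{\pal}) \]
is a quasi-isomorphism of complexes. This is precisely the content of the first assertion of Lemma~\ref{lemma.derivations1} (with $P = R$ and $T = S_{\pal}$).

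The second step is an honest isomorphism of complexes, obtained from the diagram version of the Quillen adjunction between K\"ahler differentials and trivial extension. This is the bi-natural identification recorded in Section~\ref{section.cotangentcomplex}, namely
\[ \Hom^{\ast}_{S_{\pal}}\bigl(\Omega^{\sN}_R \otimes_R S_{\pal},\, \sF\bigr) \cong \Der^{\ast}_{\K}(R, \sF) \]
for every $\sF \in \Mod^{\le 0}(S_{\pal})$, applied here to $\sF = S_{\pal}$. By the very definition of the cotangent complex (Definition~\ref{def.cotangentcomplex}) one has $\sL_R = \Omega^{\sN}_R \otimes_R S_{\pal}$, so this rewrites as
\[ \Der^{\ast}_{\K}(R, S_{\pal}) \cong \Hom^{\ast}_{S_{\pal}}(\sL_R, S_{\pal}). \]
Composing with the quasi-isomorphism of the first step and taking $H^{\ast}$ yields the claim.

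The one point I would verify with some care, and which I expect to be the only non-routine check, is that the adjunction isomorphism in step two is compatible with the \emph{diagram} structure, i.e.\ matches the cocycle compatibility of a family $\{\alpha_{\alpha}\colon R_{\alpha} \to S_{\alpha}\}_{\alpha \in \sN}$ with that of a morphism of $S_{\pal}$-modules $\sL_R \to S_{\pal}$, and that differentials on both sides correspond. This is a levelwise check: the classical affine adjunction is applied at each $\alpha \in \sN$, and compatibility with the maps $R_{\alpha} \to R_{\beta}$ and $S_{\alpha} \to S_{\beta}$ is forced by the functoriality of $\Omega_{-}$ and of extension of scalars. No additional homotopical input beyond Lemma~\ref{lemma.derivations1} and Remark~\ref{rmk.cofibrantcotangent} is needed.
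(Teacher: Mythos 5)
Your proposal is correct and follows essentially the same route as the paper: the authors likewise apply the first assertion of Lemma~\ref{lemma.derivations1} to the weak equivalence $R\to S_{\pal}$ to obtain the quasi-isomorphism $\Der^{\ast}_{\K}(R,R)\to\Der^{\ast}_{\K}(R,S_{\pal})$, and then conclude via the adjunction isomorphism $\Der^{\ast}_{\K}(R,S_{\pal})\cong\Hom^{\ast}_{S_{\pal}}(\Omega^{\sN}_R\otimes_R S_{\pal},S_{\pal})=\Hom^{\ast}_{S_{\pal}}(\sL_R,S_{\pal})$. Your additional remarks on the levelwise compatibility of the adjunction with the diagram structure are a sound (if implicit in the paper) verification of the same argument.
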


It is worth to point out that combining Theorem~\ref{thm.palamodov} and Theorem~\ref{thm.cohomology} we have  that the space of first order deformations is nothing but $\Ext^1_{X}(\mathbb{L}_X,\Oh_X)$ and the obstructions are contained in $\Ext^2_{X}(\mathbb{L}_X,\Oh_X)$, where $\mathbb{L}_X\in 
\D(\QCoh(X))$ and $\Upsilon^{\ast}\mathbb{L}_X =[\sL_R]$.

\bigskip

\section{A remark about deformations of maps (after Horikawa, Ran and Pridham)}

The argument used  in Section~\ref{section.resolvent}  easily extends to every morphism of separated schemes, considered as a contravariant functor from the direct Reedy category $\{0\to 1\}$ to the category of separated schemes. More precisely,
given any morphism $f\colon X\to Y$ of separated schemes we can find a family of pairs 
$\{(U_i,V_i)\}_{i\in I}$ such that:
\begin{enumerate}

\item the family $\{U_i\}$ is an affine open cover of $X$,

\item the family $\{V_i\}$ is an affine open cover of $Y$;
    
\item $f(U_i)\subset V_i$ for every $i\in I$.
\end{enumerate}
We then define the nerve $\sN$ as the family of finite subsets $\alpha\subset I$ such that 
$V_{\alpha}\not=\emptyset$. Then the data $X,Y,f$ is encoded by the diagram  
\[ S_{\pal,\pal}\colon \sN\times \{0\to 1\}\to \CDGA^{\le 0}_{\K},\]
\[ S_{\alpha,0}=\Gamma(V_{\alpha},\Oh_Y),\qquad 
S_{\alpha,1}=\Gamma(U_{\alpha},\Oh_X)\,.\]
By definition of the Reedy model structure, a diagram 
$Q_{\alpha,j}$, $\alpha\in \sN$, $j=0,1$ over the small category 
$\sN\times \{0\to 1\}$ is cofibrant if and only if for every 
$\alpha\in \sN$ the maps  
\[ \colim_{\gamma<\alpha}Q_{\gamma,0}\to Q_{\alpha,0},\qquad 
\colim_{\gamma<\alpha}Q_{\gamma,1}\otimes_{\mathop{\colim\limits_{\gamma<\alpha}}Q_{\gamma,0}}Q_{\alpha,0}\to Q_{\alpha,1} \]
are cofibrations in $\CDGA_{\K}^{\le 0}$. Therefore $Q_{\pal,\pal}$ is cofibrant if and only if 
$Q_{\pal,0}$ is cofibrant and $Q_{\pal,0}\to Q_{\pal,1}$ is a cofibration. 

In particular if $R_{\pal,\pal}$  is a resolvent for $f\colon X\to Y$, i.e., a Reedy cofibrant resolution of $S_{\pal,\pal}$, then $R_{\pal,0}$ is a resolvent of 
$Y$, $R_{\pal,1}$ is a resolvent of $X$ and $R_{\pal,0}\to R_{\pal,1}$ is a cofibration, 
cf. Example~\ref{ex.defoalgebramorphism}. This implies for instance, considering $R_{\pal,1}$ as a
$R_{\pal,0}$-module, that the natural map
\[ \sigma\colon \Der^*(R_{\pal,1},R_{\pal,1})\to \Der^*(R_{\pal,0}, R_{\pal,1})\]
is surjective, while the natural map   
\[ \tau\colon \Der^*(R_{\pal,0},R_{\pal,0})\to \Der^*(R_{\pal,0}, R_{\pal,1})\]
is injective. Therefore we have a short exact sequence
\[ 0\to \Der^*(R_{\pal,\pal},R_{\pal,\pal})\to 
\Der^*(R_{\pal,0},R_{\pal,0})\oplus \Der^*(R_{\pal,1},R_{\pal,1})\xrightarrow{\sigma-\tau}
\Der^*(R_{\pal,0},R_{\pal,1})\to 0\,.\]
We have proved that the DG-Lie algebra on the left controls the deformations of $f$. Thus, setting 
$T^i(f)=H^i(\Der^*(R_{\pal,\pal},R_{\pal,\pal}))$ we have that $T^1(f)$ is the space of first order deformations, while $T^2(f)$ is the obstruction space. The resulting cohomology long exact sequence 
\begin{equation}\label{equ.horikawaran} 
\cdots T^i(f)\to T^i(X)\oplus T^i(Y)\to H^i(\Der^*(R_{\pal,0},S_{\pal,1}))\to T^{i+1}(f)\to 
T^{i+1}(X)\oplus T^{i+1}(Y)\cdots
\end{equation} 
is familiar to most people working in deformation theory,  since the same has been  
proved over the field of complex numbers by Horikawa \cite{Horidhm2} in the smooth case (see also 
\cite[p. 184]{namba} and \cite[Rem. 5.2]{IaconoIMNR}),
by Ran \cite{Ran} in the reduced case and in full generality by Pridham (Theorem 3.2 and Lemma 3.3 applied to Example 3.3 of \cite{pridham}).

The same considerations hold, mutatis mutandis, for every diagram 
$X\colon \sC^{op}\to \mathbf{Schemes}$ of separated $\K$-schemes over the opposite of a direct Reedy category: for simplicity of notation, for every morphism $f\colon a\to b$ in $\sC$ we shall use the same symbol $f\colon X_b\to X_a$ to denote the corresponding morphism of schemes 
$X(f)$. Here 
the role of affine open subsets is played by elements 
\[ U_{\pal}\in \prod_{a\in \sC}\{ \text{affine open subsets of }X_a\}\]
such that $f(U_b)\subset U_a$ for every morphism $f\colon a\to b$ in $\sC$.
The fact that $\sC$ is Reedy direct easily implies that 
there exists a ``covering'' $\{U_{i,\pal}\}$ of $X$ made by elements as above, with corresponding nerve $\sN$. 
Finally the deformations of the diagram of schemes $X$ are the same as the deformations of the diagram
of algebras 
\[S_{\pal,\pal}\colon \sN\times \sC\to \CDGA^{\le 0}_{\K},\qquad 
S_{\alpha,a}=\Gamma(U_{\alpha,a},\Oh_{X_a})\,,\]
that can be studied as in Section~\ref{sec.defodiagramalgebra}, since $\sN\times \sC$ is direct Reedy.

\begin{acknowledgement}
Both authors thank Donatella Iacono for useful discussions on the subject of this paper.  
\end{acknowledgement}

\end{document}